\newcounter{mycomment}
\newtheorem{thm}{Theorem}[section]
\newtheorem{lem}[thm]{Lemma}
\newtheorem{cor}[thm]{Corollary}
\newtheorem{prop}[thm]{Proposition}
\theoremstyle{definition}
\newtheorem{defi}[thm]{Definition}
\newtheorem{remark}[thm]{Remark}
\theoremstyle{remark}
\newtheorem{rem}[thm]{Remark}
\numberwithin{equation}{section}
\definecolor{esperance}{rgb}{0.0,0.5,0.0}
\definecolor{RED}{named}{red}
\newcommand{\ba}{\mathbf{a}}
\newcommand{\bb}{\mathbf{b}}
\newcommand{\bc}{\mathbf{c}}
\newcommand{\bd}{\mathbf{d}}
\newcommand{\be}{\mathbf{e}}
\newcommand{\bg}{\mathbf{g}}
\newcommand{\bk}{\mathbf{k}}
\newcommand{\bp}{\mathbf{p}}
\newcommand{\bs}{\mathbf{s}}
\newcommand{\bu}{\mathbf{u}}
\newcommand{\bv}{\mathbf{v}}
\newcommand{\bw}{\mathbf{w}}
\newcommand{\bx}{\mathbf{x}}
\newcommand{\bNN}{\bZ_{\geq 1}}
\newcommand{\MIv}[1]{M_{#1}(\bs,\bv)}
\newcommand{\Mllv}{M_{\llbracket 1, d \rrbracket}(\bs, \bv)}
\newcommand{\Xpt}[1]{X_{#1}(\bs)}
\newcommand{\Xball}[1]{X_{#1}(\bs)}
\newcommand{\qnu}{q_{\nu}}
\newcommand{\cOnu}{\cO_{\nu}}
\newcommand{\pinu}{\pi_{\nu}}
\DeclareMathOperator{\Card}{Card}
\newcommand{\al}{\alpha}
\newcommand{\Ga}{\Gamma}
\newcommand{\del}{\delta}
\newcommand{\lam}{\lambda}
\newcommand{\Lam}{\Lambda}
\newcommand{\eps}{\epsilon}
\newcommand{\om}{\omega}
\newcommand{\vphi}{\varphi}
\newcommand{\cO}{\mathcal{O}}
\newcommand{\cQ}{\mathcal{Q}}
\newcommand{\bE}{\mathbb{E}}
\newcommand{\bR}{\mathbb{R}}
\newcommand{\bZ}{\mathbb{Z}}
\newcommand{\bF}{\mathbb{F}}
\newcommand{\SL}{\operatorname{SL}}
\newcommand{\GL}{\operatorname{GL}}
\newcommand\wt[1]{\widetilde{#1}}
\newcommand\mb[1]{\mathbf{#1}}
\newcommand\tb[1]{\textbf{#1}}
\newcommand{\onto}{\xymatrix{\ar@{>>}[r]&}}
\newcommand{\eq}[1]
{
\begin{equation*}
{#1}
\end{equation*}
}
\newcommand{\eqlabel}[2]
{
\begin{equation}
{#2}\label{#1}
\end{equation}
}
\newcommand*{\rom}[1]{\expandafter\@slowromancap\romannumeral #1@}
\begin{document}

\title{Singular systems of linear forms over global function fields}

\date{}

\author{Gukyeong Bang}
\address{Gukyeong ~Bang. Department of Mathematical Sciences, Seoul National University}
\email{gukyeong.bang@snu.ac.kr}

\author{Taehyeong Kim}
\address{Taehyeong ~Kim. The Einstein Institute of Mathematics, Edmond J. Safra Campus, Givat Ram\\
The Hebrew University of Jerusalem, Jerusalem, 91904, Israel}
\email{taehyeong.kim@mail.huji.ac.il}

\author{Seonhee Lim}
\address{Seonhee ~Lim. Department of Mathematical Sciences and Research Institute of Mathematics, Seoul National University}
\email{slim@snu.ac.kr}
\thanks{}


\keywords{}

\def\thefootnote{}
\footnote{\textbf{Keywords:} Diophantine approximation, function fields, singular matrices, Hausdorff dimension, Margulis function. 2020 {\it Mathematics
Subject Classification}: Primary 11K60 ; Secondary 37P15, 37A44.}   
\def\thefootnote{\arabic{footnote}}
\setcounter{footnote}{0}

\begin{abstract}
In this paper, we consider singular systems of linear forms over global function fields of class number one and give an upper bound for the Hausdorff dimension of the set of singular systems of linear forms by constructing an appropriate Margulis height function on the space of lattices over global function fields.
\end{abstract}

\maketitle
\section{Introduction}
An $m \times n$ matrix $\mb{A} \in M_{m,n}(\bR)$ is called \textit{singular} if for any $\eps>0$ there exists $T_0 \geq 1$ such that for all $T\geq T_0$, there exist $\mb{p}\in\bZ^m$ and $\mb{q}\in\bZ^n$ such that 
\[
\|\mb{A} \mb{q}+\mb{p}\| \leq \eps T^{-n/m}\quad\text{and}\quad 0<\|\mb{q}\|\leq T,
\] where $\|\cdot\|$ denotes the sup norm. In 1926, Khintchine \cite{Khi26} introduced the notion of singularity and showed that the set of such matrices has Lebesgue measure zero. On the other hand, the computation of the Hausdorff dimension of the set of singular matrices has been a challenge until the breakthrough by Das, Fishman, Simmons, and Urba\'{n}ski \cite{DFSU}. Before this breakthrough, it was proved in \cite{C11} that the Hausdorff dimension of the set of singular pairs (i.e., $m=2$ and $n=1$) is $4/3$. This result was extended to the $m$-dimensional singular vectors (i.e., $n=1$) in \cite{CC16}, where the authors proved that the singular set has the Hausdorff dimension $m^2/(m+1)$. For general singular matrices, it was shown in \cite{KKLM} that the Hausdorff dimension of $m\times n$ singular matrices is at most $mn-\frac{mn}{m+n}$, and finally, it was shown in \cite{DFSU} that the upper bound is sharp.

In contrast to the intensive research mentioned above, there have been fewer results on singular sets over function fields (see, for example, \cite{DX}). In this article, we pursue the method in \cite{KKLM} in the function field setting and prove that the Hausdorff dimension of $m\times n$ singular matrices over function fields is at most $mn-\frac{mn}{m+n}$.

Let us first introduce the function field setting we will consider. Let $\mathbf K$ be any global function field over a finite field $\bF_q$ of $q$ elements for a prime power $q$, that is, the function field of a geometrically connected smooth projective curve $\mb{C}$ over $\bF_q$. We fix a discrete valuation $\nu$ on $\mathbf K$ and denote by $\mathbf K_\nu$ and $\cO_\nu$ the completion of $\mathbf K$ with respect to $\nu$ and its valuation ring, respectively. Fix a uniformizer $\pi_\nu\in \mathbf K$ such that $\nu(\pi_\nu)=1$ and denote $\mathbf k_\nu = \cO_\nu /\pi_\nu \cO_\nu$ and $q_\nu = \Card \mathbf k_\nu$. The absolute value $|\cdot|$ associated with $\nu$ is defined by $|x|=q_\nu^{-\nu(x)}$ for any $x\in \mathbf K_\nu$. For any positive integer $s$, we denote by $\|\cdot\|$ the sup norm on $\mathbf K_\nu^s$ and denote by $\dim_H$ the Hausdorff dimension of subsets of $\mathbf K_\nu^s$ with respect to the sup norm.

Let $R_\nu$ be the affine algebra of the affine curve $\mb{C} \smallsetminus \{\nu\}$. Then $R_\nu$ is discrete in $\mathbf K_\nu$ and plays a similar role to the ring of integers $\bZ$ in $\bR$. We remark that $R_\nu$ is a Dedekind domain in general, but we will assume that $R_\nu$ is a principal ideal domain, that is, the ideal class group of $R_\nu$ is trivial (see \cite[II.2]{Ser} and \cite[Chapter 1]{Nar}). The assumption is only needed in order to use the submodularity of the covolume function established in \cite{Bang}.  

We say that a matrix $\mb{A} \in M_{m,n}(\mathbf K_\nu)$ is \textit{singular} if for any $\eps>0$ there exists $T_0\geq 1$ such that for all $T\geq T_0$, there exist
$\mb{p}\in R_\nu^m$ and $\mb{q}\in R_\nu^n$ satisfying   
\eq{
\|\mb{A}\mb{q}+\mb{p}\|\leq \eps T^{-n/m} \quad\text{and}\quad 0<\|\mb{q}\| \leq T. 
}
Denote by $\mathrm{Sing}(m,n)$ the set of all singular matrices in $ M_{m,n}(\mathbf K_\nu)$. 
The main result of this article is the following:
\begin{thm}\label{Thm_sing} 
Assume that $R_\nu$ is a principal ideal domain.
For $m,n \in \bNN$,
\[\dim_H \mathrm{Sing}(m,n) \leq mn - \frac{mn}{m+n}.\]
\end{thm}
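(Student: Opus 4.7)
The plan is to carry out the argument of \cite{KKLM} in the function field setting, with the submodularity result of \cite{Bang} in place of its real analogue. Set $d=m+n$, $G=\SL_d(\mathbf K_\nu)$, $\Gamma=\SL_d(R_\nu)$ and $X=G/\Gamma$; for $t\in\bN$ write
\[
g_t=\diag{\pi_\nu^{-nt}I_m,\,\pi_\nu^{mt}I_n}\in G,\qquad u_{\mb A}=\smallmat{I_m&\mb A\\0&I_n}\in G,
\]
so that $g_tu_{\mb A}g_t^{-1}=u_{\pi_\nu^{-(m+n)t}\mb A}$. The standard non-archimedean Dani correspondence identifies $\mathrm{Sing}(m,n)$ with the set of $\mb A$ whose forward orbit $(g_tu_{\mb A}\Gamma)_{t\in\bN}$ eventually leaves every fixed compact subset of $X$. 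Since $\mathrm{Sing}(m,n)$ is invariant under translation by $M_{m,n}(R_\nu)$ and $M_{m,n}(\mathbf K_\nu)=M_{m,n}(R_\nu)+M_{m,n}(\cO_\nu)$, it suffices to bound $\dim_H(\mathrm{Sing}(m,n)\cap U)$, where $U\defn M_{m,n}(\cO_\nu)$ denotes the unit ball.

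Following \cite[\S2--3]{KKLM}, I introduce for a parameter $0<\alpha<mn/(m+n)$ a Margulis-type height function $\varphi_\alpha:X\to[1,\infty)$, built as a weighted sum (or maximum) over proper nonzero primitive $R_\nu$-submodules $\Lambda'\le R_\nu^d$ of $(\on{covol}\,g\Lambda')^{-\alpha}$. The PID hypothesis on $R_\nu$ guarantees that such submodules are free, and that their covolumes are well defined via $\mathbf K_\nu$-norms on exterior powers of $\mathbf K_\nu^d$. The heart of the proof is a \emph{contraction inequality}
\[
\int_U\varphi_\alpha\pa{g_tu_{\mb A}\,x}\,d\mb A\ \le\ c\,q_\nu^{-\delta t}\,\varphi_\alpha(x)\ +\ b,\qquad x\in X,\ t\ge t_0,
\]
for some constants $\delta,c,b>0$ and $t_0\in\bN$ depending only on $m,n,\alpha$. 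Its proof combines (i) the eigenvalue pattern of $g_t$ acting on each $\wedge^k\mathbf K_\nu^d$, which for $\alpha<mn/(m+n)$ forces contraction on the ``generic'' rank-$k$ primitive submodule, with (ii) the submodularity of the $R_\nu$-covolume proved in \cite{Bang}, used to absorb the contribution of any new submodule that becomes small after the action of $g_tu_{\mb A}$ into the value of $\varphi_\alpha(x)$.

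Granted the contraction inequality, a standard iteration-and-covering argument closes the proof. Fixing $t$ large enough that $cq_\nu^{-\delta t}<1/2$ and iterating in $N$, one obtains constants $C_0,C_1>0$ with
\[
\bigav{\set{\mb A\in U:\varphi_\alpha(g_{kt}u_{\mb A}\Gamma)\ge R\text{ for all }k=1,\dots,N}}\ \le\ C_0\,(C_1/R)^N.
\]
Because $g_tu_{\mb A}g_t^{-1}=u_{\pi_\nu^{-(m+n)t}\mb A}$, the function $\mb A\mapsto\varphi_\alpha(g_{Nt}u_{\mb A}\Gamma)$ varies by at most a bounded multiplicative factor on ultrametric balls of radius $q_\nu^{-(m+n)Nt}$, so the above set is covered by $O((C_1/R)^N\,q_\nu^{(m+n)Nt\cdot mn})$ ultrametric balls of that radius. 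Taking $R=q_\nu^{\alpha(m+n)t}$, and using that every singular $\mb A\in U$ eventually satisfies $\varphi_\alpha(g_{kt}u_{\mb A}\Gamma)\ge R$ for all large $k$ (by the Dani correspondence together with the blow-up of $\varphi_\alpha$ in the cusp), we cover $\mathrm{Sing}(m,n)\cap U$ by $O(q_\nu^{(m+n)Nt(mn-\alpha)})$ balls of radius $q_\nu^{-(m+n)Nt}$ for all sufficiently large $N$. The standard conversion of covering numbers to Hausdorff dimension yields $\dim_H(\mathrm{Sing}(m,n)\cap U)\le mn-\alpha$; letting $\alpha\uparrow mn/(m+n)$ completes the proof.

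The principal obstacle is establishing the contraction inequality with the sharp exponent $\alpha<mn/(m+n)$. The real-variable analysis of \cite{KKLM} is organised by a case split of primitive submodules $\Lambda'$ according to their relative position with respect to the stable and unstable flags of $g_t$, estimating separately the contributions of submodules contained in, projecting surjectively onto, or transverse to these flags. In the ultrametric setting several of these estimates simplify, since the average over $\mb A\in U$ reduces to a finite sum over cosets of an open subgroup of $M_{m,n}(\cO_\nu)$; but the delicate balance of exponents yielding the critical $\alpha<mn/(m+n)$ must be verified by hand, and the use of submodularity from \cite{Bang} to merge the contributions of overlapping primitive submodules — ensuring that a common sublattice is not overcounted — is the crucial input that replaces the corresponding real-variable argument.
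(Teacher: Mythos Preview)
Your outline has the right overall architecture, but the central step — the contraction inequality — will not deliver the sharp exponent with the height function you describe. Building $\varphi_\alpha$ from $(\on{cov}\,g\Lambda')^{-\alpha}$ with a \emph{single} exponent $\alpha$ applied uniformly to submodules of every rank is precisely what the KKLM method avoids. For a decomposable $\bv\in\bigwedge^i\mathbf K_\nu^{d}$ with $i\le m$, the averaging over $\bs$ gives at best
\[
\int_{M_{m,n}(\cO_\nu)}\|g_t u_\bs \bv\|^{-\alpha}\,d\bs \;\le\; C\,q_\nu^{-in\alpha t}\,\|\bv\|^{-\alpha},
\]
since the most expanding eigenvalue of $g_t$ on $\bigwedge^i\mathbf K_\nu^d$ is $q_\nu^{int}$; and the integrability constraint coming from rank $i=m$ forces $\alpha<1$. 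Thus the worst rank ($i=1$, or dually $i=d-1$) yields a contraction rate of only $q_\nu^{-\min(m,n)\alpha t}$, which translates via the covering argument into a dimension bound $mn-\min(m,n)\alpha/(m+n)$ — strictly weaker than $mn-mn/(m+n)$ whenever $\max(m,n)>1$. The KKLM device, which the paper implements in Proposition~\ref{LinAlgEst}, is to take \emph{rank-dependent} exponents $\beta_i=m/i$ (for $i\le m$) and $\beta_i=n/(d-i)$ (for $i>m$), so that $i n\beta_i=mn$ uniformly and every rank contracts at the same rate $q_\nu^{-mnt}$. The height function is then $\wt\alpha=\sum_i(\omega_i\alpha_i)^{\beta_i}$, and combining the ranks requires the submodularity input from \cite{Bang} together with the combinatorial lemma \cite[Proposition~4.1]{KKLM}. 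Establishing the rate $q_\nu^{-mnt}$ at the critical exponents $\beta_i$ is exactly the hard analytic step (Theorem~\ref{ThmWedgeUpperBound} and Proposition~\ref{prop3.1} here), and it cannot be bypassed by taking a limit $\alpha\uparrow mn/(m+n)$.

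A second, smaller issue: your iterated bound $C_0(C_1/R)^N$ is not what the contraction inequality yields. Iterating $\int\varphi_\alpha(g_t u_\bs x)\,d\bs\le a\,\varphi_\alpha(x)$ over the set where the trajectory stays above a threshold gives a measure bound of order $a^N/R$, not $(C_1/R)^N$; the $R^{-1}$ factor does not compound. It is precisely because the paper obtains $a=3ct\,q_\nu^{-mnt}$ (with the critical rate $mnt$ in the exponent, and only a polynomial prefactor $t$) that Corollary~\ref{CorEst} produces coverings by $q_\nu^{(m+n-1)mntN}$ balls of radius $q_\nu^{-(m+n)tN}$, and letting $t\to\infty$ kills the polynomial loss to give the sharp bound.
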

In fact, we will prove a stronger statement for the dimension of the set of lattices with trajectories ``escaping on average". See Proposition \ref{Prop_dim} and the definition just before the proposition.

In order to estimate the upper bound of the Hausdorff dimension of the singular set, we mainly use the connection (Subsection \ref{subsec2.2}) between homogeneous dynamics and Diophantine approximation, so-called \textit{Dani's correspondence}, and the construction (Corollary \ref{CorMarFtn}) of an appropriate height function satisfying a certain contraction property, so-called a \textit{Margulis function} which originated in \cite{EMM}.

We basically follow the approach of \cite{KKLM} to construct an appropriate Margulis function. The first step is to obtain a certain contraction property (Proposition \ref{LinAlgEst}) at the linear algebra level, and then move to a contraction property (Corollary \ref{CorAlEst}) in homogeneous space using submodularity.

Gaussian random variables were used in \cite{KKLM} to make calculations simpler at the linear algebra level. In contrast, we will use a restriction of the Haar measure instead. We will prove that some analogous properties of the Gaussian measure still hold for the Haar measure. This is the main step (Theorem \ref{ThmWedgeUpperBound}) to obtain the linear algebra level contraction property.

The article is organized as follows. In Section \ref{sec2}, we introduce terms and definitions related to global function fields, homogeneous dynamics to interpret the singularity in terms of dynamics, and exterior algebra to develop the linear algebra level contraction property.
Throughout Sections \ref{Sec3} and \ref{Sec4}, we will prove the contraction property (Proposition \ref{LinAlgEst}) at the linear algebra level. In particular, Section \ref{Sec3} is devoted to key linear algebra estimates (Theorem \ref{ThmWedgeUpperBound}), and in Section \ref{Sec4}, we prove the linear algebra level contraction property (Proposition \ref{LinAlgEst}) by establishing the Hodge duality over function fields.
Section \ref{sec5} is devoted to the construction of Margulis functions under the assumption that $R_\nu$ is a principal ideal domain.
Finally, in Section \ref{sec6}, we estimate the upper bound of the Hausdorff dimension of the singular set using the Margulis function.

\vspace{5mm}
\tb{Acknowledgments}. We thank Frédéric Paulin for insightful discussions and for sharing his results, and the referee for a careful reading of the manuscript and for valuable suggestions that improved the exposition, in particular for pointing out a more precise and simpler proof of Claim 2 in Proposition \ref{EstProp}.
This project was partly conducted during T.K.'s visit to Seoul National University.
S.L. and G.B. are supported by National Research Foundation
of Korea, under Project number NRF-2020R1A2C1A01011543, S.L. by RS-2023-00301976, RS-2025-02293115, and
RS-2025-00515082. S.L. is an associate member of Korea Institute for Advanced Study.

\section{Preliminaries}\label{sec2}

\subsection{Global function fields}\label{subsec2.1}
As in the introduction, we consider a global function field $\mathbf K$ over a finite field $\bF_q$ of $q$ elements for a prime power $q$.
We denote by $g$ the genus of $\mb{C}$.
Recall that we consider a discrete valuation $\nu$ on $\mathbf{K}$, the completion $\mathbf{K}_\nu$ of $\mb{K}$ for $\nu$, its valuation ring $\cO_\nu$, a uniformizer $\pi_\nu\in\mathbf{K}$, the residual field $\mathbf k_\nu = \cO_\nu / \pi_\nu \cO_\nu$, its cardinality $q_\nu = \Card \mathbf k_\nu$, the absolute value $|\cdot|$ associated with $\nu$, and the affine algebra $R_\nu$ of the affine curve $\mb{C} \smallsetminus \{\nu\}$. 

Then the completion $\mathbf{K}_\nu$ of $\mb{K}$ for $\nu$ is the field $\mathbf k_\nu ((\pi_\nu))$ of Laurent series $x=\sum_{i\in\bZ}x_i(\pi_\nu)^i$ in the variable $\pi_\nu$ over $\mathbf{k}_\nu$, where $x_i\in\mathbf{k}_\nu$ is zero for $i\in\bZ$ small enough.
It follows that 
\[
|x|=q_\nu^{-\sup\{j\in\bZ:\ \forall i<j,\ x_i =0\}},
\]
and $\cO_\nu = \mathbf k_\nu [[\pi_\nu]]$ is the local ring of power series $x=\sum_{i\in\bZ_{\geq 0}}x_i(\pi_\nu)^i$ in the variable $\pi_\nu$ over $\mathbf{k}_\nu$.

For any positive integer $s$, we denote by $\mu_{s}$ the Haar measure of $\mathbf K_\nu ^{s}$ normalized so that $\mu_{s}(\cO_\nu^{s}) = 1$ and by $\overline{\mu_{s}} = \mu_{s}|_{\cO_\nu^{s}}$ the restriction of $\mu_{s}$ on $\cO_\nu^{s}$.
A \textit{lattice} $\Lambda$ in $\mathbf{K}_\nu^s$ is a discrete free $R_\nu$-submodule in $\mathbf{K}_\nu^s$ that generates $\mathbf{K}_\nu^s$ as a $\mathbf{K}_\nu$-vector space. 
The \textit{covolume} of $\Lambda$, denoted by $\mathrm{cov}(\Lambda)$, is the Haar measure $\mu_{s}$ of a fundamental domain of $\Lambda$ in $\mathbf{K}_\nu^s$.
Note that $R_\nu^s$ is a lattice in $\mathbf{K}_\nu^s$ and by \cite[Lemma 14.4]{BPP} we have 
\eqlabel{Eq_cov_R}{
\mathrm{cov}(R_\nu^s)=q^{(g-1)s}.
}
We say that a lattice $\Lam$ in $\mb{K}_\nu^s$ is \textit{unimodular} if $\mathrm{cov}(\Lambda)=\mathrm{cov}(R_\nu^s)$.

\subsection{Homogeneous dynamics}\label{subsec2.2}
Fix $m,n \in \bNN$. Let 
\[
G=\SL_{m+n} (\mathbf K_\nu), \quad \Ga=\SL_{m+n}(R_\nu),\text{ and}\quad X= G/\Ga.
\] 
The homogeneous space $X$ can be identified with the $G$-orbit of $R_\nu^{m+n}$ in the space of unimodular lattices in $\mathbf K_\nu^{m+n}$. Note that the homogeneous space $X$ may not be the full space of unimodular lattices in $\mathbf K_\nu^{m+n}$ as one can multiply the standard lattice $R_\nu^{m+n}$ by a constant which is not in $R_\nu$ but has absolute value 1.
In order to relate Diophantine approximation and homogeneous dynamics, we consider the following diagonal flow: 
\[
g_t = \left(\begin{matrix} \pi_\nu^{-nt} \mathbf{I}_m & \\ & \pi_\nu^{mt} \mathbf{I}_n \end{matrix} \right)\quad \text{for }t\in \bZ.
\] Then $g_t$ acts on $X$ by left multiplication and the unstable horospherical subgroup for $g_1$ in $G$ is given by
\[
U=\left\{u_{\mathbf s}= \left(\begin{matrix} \mathbf{I}_m & \mathbf s \\ & \mathbf{I}_n \end{matrix} \right) : \mathbf s\in M_{m,n}(\mathbf K_\nu)\right\}.
\]  

\begin{lem}[Dani's correspondence]\label{Lem_Dani}
    A matrix $\mathbf{s}\in M_{m,n}(\mathbf{K}_\nu)$ is singular if and only if the trajectory $\{g_t u_{\mathbf{s}}\Ga:t\in\bNN\}$ is divergent in $X$, that is,  leaves every compact subset of $X$.
\end{lem}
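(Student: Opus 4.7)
The claim is the function-field analogue of the classical Dani correspondence, and the engine is Mahler's compactness criterion: a subset $Y \subset X$ is relatively compact if and only if there exists $\delta > 0$ such that every nonzero vector in every lattice in $Y$ has sup norm at least $\delta$. In particular, the trajectory $\{g_t u_{\mathbf{s}}\Ga : t \in \bN\}$ is divergent in $X$ if and only if for every $\delta > 0$ there exists $t_0$ such that for all $t \geq t_0$ the lattice $g_t u_{\mathbf{s}} R_\nu^{m+n}$ contains a nonzero vector of sup norm at most $\delta$. The strategy is to translate this short-vector condition into the Diophantine inequalities defining singularity via a single explicit computation.

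That computation is: for $\mathbf{p} \in R_\nu^m$ and $\mathbf{q} \in R_\nu^n$,
\[
g_t u_{\mathbf{s}} \begin{pmatrix} \mathbf{p} \\ \mathbf{q} \end{pmatrix} = \begin{pmatrix} \pi_\nu^{-nt}(\mathbf{s}\mathbf{q} + \mathbf{p}) \\ \pi_\nu^{mt} \mathbf{q} \end{pmatrix},
\]
whose sup norm equals $\max\bigl(q_\nu^{nt}\|\mathbf{s}\mathbf{q}+\mathbf{p}\|,\ q_\nu^{-mt}\|\mathbf{q}\|\bigr)$. As $(\mathbf{p},\mathbf{q})$ ranges over $R_\nu^{m+n}$ these exhaust the lattice $g_t u_{\mathbf{s}} R_\nu^{m+n}$.

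For the forward direction, assuming $\mathbf{s}$ is singular, I apply its definition with an arbitrary $\delta > 0$ playing the role of $\eps$, along the discrete sequence $T_t = q_\nu^{mt}$, so that $T_t^{-n/m} = q_\nu^{-nt}$. Each resulting pair $(\mathbf{p},\mathbf{q})$ with $\mathbf{q} \neq 0$ produces a nonzero vector of $g_t u_{\mathbf{s}} R_\nu^{m+n}$ of sup norm at most $\delta$, and Mahler's criterion then yields divergence.

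For the converse, assuming divergence, given $\eps > 0$ and a large $T$, I would choose an integer $t$ so that $q_\nu^{mt}$ lies in $[\eps^{m/n}T,\, T]$; this is possible for all large $T$ once $\eps$ is small enough that the interval has multiplicative length at least $q_\nu^m$ (it suffices to treat small $\eps$, as the condition becomes weaker when $\eps$ grows). Applying divergence with a small threshold $\delta = \delta(\eps)$ (one checks $\delta = \eps^2$ works) produces a nonzero short vector in $g_t u_{\mathbf{s}} R_\nu^{m+n}$, and the coordinate-wise bounds from the computation above convert directly into $\|\mathbf{q}\| \leq T$ and $\|\mathbf{s}\mathbf{q}+\mathbf{p}\| \leq \eps T^{-n/m}$. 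The one subtlety is excluding $\mathbf{q} = 0$: in that case the short vector would equal $(\pi_\nu^{-nt}\mathbf{p},0)$ with $\mathbf{p} \in R_\nu^m \setminus \{0\}$, whose norm is bounded below by $q_\nu^{nt}$ and so exceeds any fixed threshold for $t$ large, a contradiction. Aside from this discretization step, which reconciles the discrete time parameter $t \in \bN$ with the continuous variable $T$ in the definition of singularity, the argument is purely mechanical.
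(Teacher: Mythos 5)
Your overall strategy --- Mahler's compactness criterion plus the explicit computation of $g_t u_{\mathbf{s}}(\mathbf{p},\mathbf{q})$ --- is exactly the intended one (the paper does not write the argument out, deferring to Dani's original proof and to \cite[Theorem 1.1]{KST}), and your converse direction is correct, including the discretization of $T$ and the exclusion of $\mathbf{q}=0$. However, the forward direction as written has a genuine gap in the choice of parameters. Applying the definition of singularity with $\eps=\delta$ and $T=T_t=q_\nu^{mt}$ yields $\mathbf{p},\mathbf{q}$ with $\|\mathbf{s}\mathbf{q}+\mathbf{p}\|\leq\delta q_\nu^{-nt}$ and $0<\|\mathbf{q}\|\leq q_\nu^{mt}$, so the vector $g_tu_{\mathbf{s}}(\mathbf{p},\mathbf{q})$ has block norms $q_\nu^{nt}\|\mathbf{s}\mathbf{q}+\mathbf{p}\|\leq\delta$ and $q_\nu^{-mt}\|\mathbf{q}\|\leq 1$; its sup norm is therefore only bounded by $\max(\delta,1)=1$, not by $\delta$. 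Producing a nonzero lattice vector of norm $\leq 1$ gives no information (every unimodular lattice has one), so divergence does not follow.

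The fix is the standard exponent bookkeeping in Dani's correspondence: given $\delta=q_\nu^{-k}$, apply the definition of singularity with $\eps\leq\delta^{(m+n)/m}$ and $T=T_t=\delta q_\nu^{mt}=q_\nu^{mt-k}$. Then $\|\mathbf{q}\|\leq T$ gives $q_\nu^{-mt}\|\mathbf{q}\|\leq\delta$, while $\|\mathbf{s}\mathbf{q}+\mathbf{p}\|\leq\eps T^{-n/m}=\eps\,\delta^{-n/m}q_\nu^{-nt}$ gives $q_\nu^{nt}\|\mathbf{s}\mathbf{q}+\mathbf{p}\|\leq\eps\,\delta^{-n/m}\leq\delta$, so the vector genuinely has norm $\leq\delta$. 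Note that taking $\eps=\delta$ with the rescaled $T$ is still not enough when $m<n$, since it only yields the bound $\delta^{(m-n)/m}$ on the first block; the relation $\eps\approx\delta^{(m+n)/m}$ (or at least $\eps\leq\delta^{(m+n)/m}$) is genuinely needed. With this correction your proof is complete and matches the classical argument the paper cites.
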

\begin{proof}
 The same proof from \cite[\S 2]{D85} works verbatim in our setting with Mahler compactness criterion over global function fields as in \cite[Theorem 1.1]{KST}. 
 \end{proof}

\subsection{Exterior algebra}
{Given $i,d \in \bZ_{\geq 0} $ with $i \leq d$, we denote $\llbracket i, d\rrbracket = \{i, i+1, \dots, d\}$.
For any finite subset $S$ of $\bNN$ satisfying $|S| \geq i$, denote the set of subsets of $i$ elements in $S$ by
\eq{
	\wp_{i}(S) = \{S' \subseteq S : |S'| = i\}
} and denote $\wp_{i}^{d} = \wp_{i}(\llbracket 1, d\rrbracket)$.

Denote $\bv_I = \bv_{j_1} \wedge \cdots \wedge \bv_{j_i}$ for $I = \{j_1, \dots, j_i\} \in \wp^{d}_{i}$ with $j_1 < \cdots < j_i$ and $\bv_{j_1}, \dots, \bv_{j_i} \in \mathbf K_\nu^d$. An element $\bv \in \bigwedge^i {\mathbf K_\nu^d}$ is called \textit{decomposable} if $\bv = \bv_{\llbracket 1, i \rrbracket }$ for some $\bv_1, \dots, \bv_i \in \mathbf K_\nu^d$. Arbitrary elements of $\bigwedge^i {\mathbf K_\nu^d}$ are linear combinations of decomposable elements.
Let $\{\be_1, \dots, \be_d\}$ be the standard basis of $\mathbf K_\nu^d$. Then $\{\be_I \}_{I \in \wp^{d}_{i}}$ is the standard basis of $\bigwedge^i {\mathbf K_\nu^d}$. Define the norm on $\bigwedge^{i} \mathbf K_\nu^d$ by 
\eq{
	\| \bv\| = \max_{I \in \wp^{d}_{i}}|v_I|.
} for $\bv = \sum_{I \in \wp^{d}_{i}}v_I \be_I \in \bigwedge^{i} \mathbf K_\nu^d$.

Note that any element $g\in \SL_d(\mathbf{K}_\nu)$ acts linearly on $\mathbf K_\nu^{d}$. The action naturally extends to 
$\bigwedge^{i} \mathbf K_\nu^{d}$ by 
$$g.(\bv_1 \wedge \dots \wedge \bv_i) = (g\bv_1) \wedge \dots \wedge (g\bv_i).$$
We denote the orthogonal group for a non-Archimedean field by 
\eqlabel{def_ortho}{
	\GL_d(\cO_\nu) =  \{ A \in M_{d,d}(\cO_\nu) : |\det{A}| = 1  \},
}
and the special orthogonal group by $\SL_d(\cO_\nu) = \GL_d(\cO_\nu)  \cap \SL_d(\mathbf K_\nu)$. 
Note that the non-Archimedean norm on $\mathbf K_\nu^d$ is invariant under $\GL_d(\cO_\nu)$ (see \cite{Wei}), and thus,
the restriction measure $\overline{\mu_{d}}$ is $\GL_d(\cO_\nu)$-invariant.

\section{Key linear algebra estimates}\label{Sec3}
Throughout Sections \ref{Sec3} and \ref{Sec4}, we will prove the following proposition, which is a global function field analog of \cite[Proposition 3.1 and Lemma 3.5]{KKLM}.
We remark that all statements in Sections \ref{Sec3} and \ref{Sec4} actually hold for all non-Archimedean local fields, including $p$-adic fields and completions of global function fields, without any condition on the class number.

\begin{prop}\label{LinAlgEst}
For any $i \in \llbracket 1, m+n-1 \rrbracket$, let 
$$\beta_i := \begin{cases} \frac{m}{i} &\quad \text{if } i\leq m, \\ \frac{n}{m+n-i} &\quad \text{if } i>m.\end{cases}$$
Then there exists $c>0$ depending only on $m,n$ such that 
\eq{
\int_{M_{m,n}(\cOnu)} \|g_t u_\bs \bv\|^{-\beta_i} d\mu_{mn}(\bs) \leq ct \qnu^{-mnt}\|\bv\|^{-\beta_i}}
for any $t \in \bNN$ and any decomposable $\bv \in \bigwedge^{i} \mathbf K_\nu^{m+n}$.
\end{prop}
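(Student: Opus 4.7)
The plan is to derive Proposition~\ref{LinAlgEst} from the key linear algebra estimate (Theorem~\ref{ThmWedgeUpperBound}) together with the Hodge duality of Section~\ref{Sec4}, following the blueprint of \cite[Prop.~3.1 and Lem.~3.5]{KKLM} but replacing the Gaussian averaging on $M_{m,n}(\bR)$ there by Haar averaging on $M_{m,n}(\cO_\nu)$. First I reduce to the range $1\le i\le m$ via Hodge duality: the Hodge $\ast$-operator to be set up in Section~\ref{Sec4} is an isometry of $\bigwedge^i \mathbf K_\nu^{m+n}$ onto $\bigwedge^{m+n-i}\mathbf K_\nu^{m+n}$ sending decomposables to decomposables, and conjugates $g_t$ to the flow of the opposite sign while intertwining $u_\bs$ with a unipotent of the transposed block shape (after the change of variables $\bs\mapsto \tran{\bs}$, which preserves both $\mu_{mn}$ and the sup norm). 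Under this correspondence the index $i$ is replaced by $m+n-i$, the roles of $m$ and $n$ are swapped, and the exponent $\beta_i = n/(m+n-i)$ for $i>m$ becomes $m/(m+n-i)$ in the dual problem with $m+n-i\le m$, exactly as required.

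Next I exploit the invariance of the integrand under $K := (\SL_m\times \SL_n)(\cO_\nu)$, embedded block-diagonally in $G$. Since $K$ commutes with $g_t$ and conjugates $u_\bs$ to $u_{k_1 \bs k_2^{-1}}$, and since both the sup norm on $M_{m,n}(\cO_\nu)$ and the Haar measure $\mu_{mn}$ are invariant under $\bs\mapsto k_1\bs k_2^{-1}$, the integral in the proposition is unchanged when $\bv$ is replaced by $k\cdot\bv$ for any $k\in K$. Using that $\cO_\nu$ is a PID, I choose $k\in K$ bringing the $i$-plane $V$ associated with $\bv$ into a Smith-type normal form determined by the integer $a := \dim\bigl(V\cap (\mathbf K_\nu^m\!\times 0)\bigr)$: after the reduction, $\bv$ is, up to a scalar of absolute value $\|\bv\|$, the wedge of $\be_1,\dots,\be_a$ with $i-a$ further vectors whose nonzero entries lie in a controlled set of coordinates. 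The Pl\"ucker coordinates of $\bv$, and hence of $u_\bs\bv$, then become completely explicit.

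With $\bv$ in this normal form I partition $M_{m,n}(\cO_\nu)$ into the ultrametric shells $S_k := \{\bs : \|\bs\|=q_\nu^{-k}\}$ for $k\ge 0$. On each shell, the components of $g_t u_\bs \bv$ along the basis $\{\be_I\}$ are explicit products of the diagonal scalings $q_\nu^{((m+n)|I\cap\llbracket 1,m\rrbracket|-mi)t}$ with homogeneous polynomials of degree at most $i-a$ in the entries of $\bs$; the smallest-weight components, which drive $\|\cdot\|^{-\beta_i}$, involve those $I$ with $|I\cap\llbracket 1,m\rrbracket|$ minimal. Theorem~\ref{ThmWedgeUpperBound} is then applied shell by shell to give a bound of the form $\int_{S_k}\|g_t u_\bs \bv\|^{-\beta_i}\,d\mu_{mn}(\bs)\le c\,q_\nu^{-mnt}\|\bv\|^{-\beta_i}$ that is uniform in $k$ across the relevant window $0\le k\le O(t)$; outside that window, either $g_t u_\bs\bv$ is of order $\|g_t\bv\|$ and the integrand is trivially controlled by $\|\bv\|^{-\beta_i}$, or the shell has negligible measure. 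Summing these shell estimates produces the factor of $t$ on the right-hand side.

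The main obstacle will be the per-shell uniformity in the third step. In the archimedean case of \cite{KKLM} the sharp Gaussian decay truncates the integration and absorbs the tail cheaply, so no $t$-factor is needed at the linear algebra level; here, because the Haar measure $\mu_{mn}$ is \emph{uniformly} distributed on $\cO_\nu$, every shell in the critical window contributes at the same order, and the factor of $t$ in the conclusion is essential rather than an artifact. The heart of the matter is therefore verifying that the exponents $\beta_i = m/i$ are precisely those for which the per-shell contribution is bounded by the single quantity $c\,q_\nu^{-mnt}\|\bv\|^{-\beta_i}$, which is exactly the content of Theorem~\ref{ThmWedgeUpperBound}; the normal form of the previous step is what makes the hypotheses of that theorem concretely checkable on a given decomposable $\bv$.
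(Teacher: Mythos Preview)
Your reduction via Hodge duality to the range $i\le m$ is essentially the same as the paper's, so the issue is entirely with Steps 2 and 3.

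There is a genuine gap in Step~2. The $K=(\SL_m\times\SL_n)(\cO_\nu)$-orbit of an $i$-plane $V$ is \emph{not} determined by the single integer $a=\dim(V\cap(\mathbf K_\nu^m\times 0))$. After you use $K$ to move $V\cap\mathbf K_\nu^m$ to $\mathrm{span}(\be_1,\dots,\be_a)$ and $p_n(V)$ to $\mathrm{span}(\be_{m+1},\dots,\be_{m+i-a})$, the remaining data is an $(m-a)\times(i-a)$ matrix over $\mathbf K_\nu$, which under the residual $K$-action reduces only to its Smith invariants $\pi^{e_1},\dots,\pi^{e_r}$ with the $e_j\in\bZ$ unbounded in both directions. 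So your ``normal form'' carries infinitely many discrete parameters that your Step~3 would have to handle uniformly. When all $|\lambda_j|\le 1$ you can translate them away and the integral genuinely does collapse to the situation of Theorem~\ref{ThmWedgeUpperBound}; but when some $|\lambda_j|>1$ the mechanism is different (those coordinates freeze by the ultrametric inequality and the effective dimension drops), and you have not indicated how to treat this, nor why the bound is uniform across all Smith types. Relatedly, your shells $S_k=\{\|\bs\|=q^{-k}\}$ are not the level sets that Theorem~\ref{ThmWedgeUpperBound} controls: that theorem bounds the measure of $\{\|\bs_1\wedge\cdots\wedge\bs_i\|\le q^{-\ell}\}$ for $\bs\in M_{d,i}(\cO_\nu)$, which is a different slicing of a different matrix space, so ``apply Theorem~\ref{ThmWedgeUpperBound} shell by shell'' is not yet an argument.

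The paper avoids all of this by a two-step trick you are missing. First, it averages over the \emph{full} maximal compact $\SL_{m+n}(\cO_\nu)$: by \cite[Cor.~2.11]{Bang} this group acts transitively (up to a unit scalar) on decomposable $i$-vectors of a given norm, so $\int_{\SL_{m+n}(\cO_\nu)}\|g_t\bk\bv\|^{-\beta_i}d\lambda(\bk)=C(t)\|\bv\|^{-\beta_i}$ for a single function $C(t)$, and $C(t)$ is then computed as $\bE\|g_t(\bx_1\wedge\cdots\wedge\bx_i)\|^{-\beta_i}$ with $\bx_j$ i.i.d.\ uniform on $\cO_\nu^{m+n}$ --- this is where Theorem~\ref{ThmWedgeUpperBound} enters, via the projected variables $p_u^{(1)}(\bx_j)\in\cO_\nu^m$. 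Second, the paper relates the $U(\cO_\nu)$-integral back to the $\SL_{m+n}(\cO_\nu)$-integral by writing a neighbourhood of the identity as $P(\cO_\nu)U(\cO_\nu)$ and using the contraction $\|g_t\bp g_{-t}\|_{\mathrm{op}}\le 1$ for $\bp\in P(\cO_\nu)$, which gives $\int_{U}\le\int_{\SL_{m+n}(\cO_\nu)}$ directly. The transitivity of the full compact group is exactly what eliminates the Smith-invariant parameters that your block-diagonal $K$ cannot remove.
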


The goal of this section is to prove Proposition \ref{OrtEst}, which is the first step of Proposition \ref{LinAlgEst}.
Note that Proposition \ref{OrtEst} is an analog of \cite[Proposition 3.1]{KKLM}, but restricted to wedge vectors of dimension at most $m$, due to the fact that the Hodge dual operator is no longer equivariant under the orthogonal group $\GL_d(\cOnu)$.
For example, for $A= \left(\begin{matrix} 1 & 1 \\ 0 & 1 \end{matrix} \right) \in \GL_2(\cOnu)$, $*(A \be_1 ) \neq A(*\be_1)$ (see Lemma \ref{lem_dual} for detail). 

Another remark is that while the Gaussian measure is used in \cite{KKLM} to control an integral over $\mathrm{O}_d(\bR)$, we will use the restricted Haar measure $\overline{\mu_d}$ as an analog.
In \cite[Theorem 4.2]{Eva01}, it is shown that every $\GL_d(\cOnu)$-invariant probability measure on $\mathbf K_\nu^d$ whose coordinate projections are independent is a normalized restricted measure of the Haar measure on a $\cOnu$-module $\pinu^r \cOnu^d$ for some $r \in \bZ$.
Considering Maxwell's characterization of the Gaussian distribution in the real case, $\overline{\mu_d}$ is the right analog of (real) Gaussian measure.

Throughout this section, let $i,d \in \bNN$ with $i \leq d$. 
Given a matrix $\bs$, we denote the $j$-th column and $j$-th row vectors by $\bs_j$ and $\bs^j$, respectively.
For $\bs \in M_{d,i}(\cOnu)$, let $c_I(\bs)$ be the coefficient corresponding to $\be_I$ of $\bs_1 \wedge \dots \wedge \bs_i \in \bigwedge^i \mathbf K_\nu^d$, i.e.,
\eq{
	\bs_1 \wedge \dots \wedge \bs_i = \sum_{I \in \wp^{d}_{i}} c_I(\bs) \be_I.
} 
In order to prove Proposition \ref{OrtEst}, we need the following measure estimates:
\begin{thm}\label{ThmWedgeUpperBound}
Given $\ell, \ell ' \in \mathbb Z_{\geq 0}$ with $\ell \leq \ell'$, let $E_\ell (d,i)$ be the set of $i$-tuples of vectors of $\cOnu^d$ whose wedge product has norm bounded by $q^{-\ell}$, and $F_{\ell,\ell'}$ be its subset with the absolute value of the ``first'' coordinate $c_{  \llbracket 1, i\rrbracket}(\bs)$ bounded by $q^{-\ell'}$:
\begin{align} \label{ThmWedgeUpperBound_1}
	&E_{\ell}(d, i) = \{\bs = (\bs_1, \dots, \bs_i) \in M_{d,i}(\cOnu) :  \|  \bs_1 \wedge \dots \wedge \bs_i \|   \leq  \qnu^{-\ell}
	\}; \\
    &F_{\ell, \ell'}(d, i) = E_{\ell}(d, i) \cap \{ \bs \in M_{d,i}(\cOnu) :  |  c_{\llbracket 1, i\rrbracket}(\bs) |   \leq  \qnu^{-\ell'} \}. \nonumber    
\end{align} 
For each $i \in \llbracket 1, d \rrbracket$, there exists a constant $C_1(d,i) > 0$ depending on $d$ and $i$ such that
\eq{
	\mu_{di}(F_{\ell, \ell'}(d, i)) \leq C_1 \binom{\ell' - \ell + \binom{d}{i} }{\ell'-\ell+1} \qnu^{-\ell(d - i) -\ell'}.
} In particular, $\mu_{di}(E_{\ell}(d, i)) \leq C_2 \qnu^{-\ell(d - i + 1)}$ for a constant $C_2 = C_2(d, i) > 0$. Consequently, taking $C_1(d) := \max_{1 \leq i \leq d} C_1(d,i)$ and $C_2(d) := \max_{1 \leq i \leq d} C_2(d,i)$ yields constants depending only on $d$.
\end{thm}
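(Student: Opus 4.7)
The strategy is a double induction: an outer induction on $i\geq 1$ and, for each $i$, an inner induction on $d\geq i$. The outer base case $i=1$ is immediate, since $F_{\ell,\ell'}(d,1)=\{\bs_1\in\cOnu^d:\|\bs_1\|\leq \qnu^{-\ell},\,|s_{1,1}|\leq \qnu^{-\ell'}\}$ has measure exactly $\qnu^{-\ell(d-1)-\ell'}$ and the binomial factor is at least $1$. For $i\geq 2$, the inner base case $d=i$ uses $\binom{i}{i}=1$ together with the fact that the wedge norm coincides with $|\det|$, reducing the claim to $\mu_{i^2}(\{\bs\in M_{i,i}(\cOnu):|\det\bs|\leq \qnu^{-\ell'}\})\leq C\,\qnu^{-\ell'}$. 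I would prove this by conditioning on the top $i-1$ rows $\bs^{\llbracket 1,i-1\rrbracket}$: Laplace expansion along the last row writes $\det\bs=\langle\mathbf{M}(\bs^{\llbracket 1,i-1\rrbracket}),\bs^i\rangle$ with $\mathbf{M}$ the cofactor vector, so the conditional Haar measure of $\bs^i\in\cOnu^i$ equals $\min(1,\qnu^{-\ell'}/\|\mathbf{M}\|)$. Since $\|\mathbf{M}\|$ is the wedge norm of the $i-1$ rows of $\bs^{\llbracket 1,i-1\rrbracket}$ viewed as vectors in $\cOnu^i$---an $E$-type quantity for $(d,i)=(i,i-1)$ which is available from the outer induction hypothesis---a layer-cake summation produces the $\qnu^{-\ell'}$ bound.

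For the inner inductive step $d-1\to d$ with $d>i$, I would split $\bs=(\bs^{\llbracket 1,d-1\rrbracket},\bs^d)$. Since $\llbracket 1,i\rrbracket\subseteq \llbracket 1,d-1\rrbracket$, the minors $c_J(\bs)$ with $d\notin J$---including $c_{\llbracket 1,i\rrbracket}(\bs)$---depend only on $\bs^{\llbracket 1,d-1\rrbracket}$, so the conditions not involving row $d$ amount exactly to $\bs^{\llbracket 1,d-1\rrbracket}\in F_{\ell,\ell'}(d-1,i)$. The remaining $\binom{d-1}{i-1}$ conditions come from the minors
\[
c_{J'\cup\{d\}}(\bs)=\sum_{k=1}^{i}(\pm)\,s_{d,k}\,m_{J',k},\qquad J'\in\wp^{d-1}_{i-1},
\]
which are linear in $\bs^d$ with coefficients $m_{J',k}$ equal to $(i-1)\times(i-1)$ minors of $\bs^{\llbracket 1,d-1\rrbracket}$. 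For fixed $\bs^{\llbracket 1,d-1\rrbracket}$ the conditional measure of $\bs^d\in\cOnu^i$ is the Haar measure of the slab defined by these linear inequalities; applying Smith normal form to the $\binom{d-1}{i-1}\times i$ cofactor matrix and invoking Cauchy--Binet to express its invariant factors in terms of $i\times i$ minors of $\bs^{\llbracket 1,d-1\rrbracket}$ (each bounded by $\qnu^{-\ell}$ on $E_\ell$) yields the desired pointwise conditional bound. Integrating against the inductive estimate on $\mu(F_{\ell,\ell'}(d-1,i))$ and performing a layer-cake summation then produces the bound for $\mu(F_{\ell,\ell'}(d,i))$.

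The hard part will be the condition $|c_{\llbracket 1,i\rrbracket}(\bs)|\leq \qnu^{-\ell'}$: it singles out one specific Plücker coordinate and thereby breaks the $\mathrm{O}_d$-invariance of the problem, blocking any clean global Smith-normal-form reduction of $\bs$. Tracking this distinguished coordinate through the induction is precisely what produces the polynomial factor $\binom{\ell'-\ell+\binom{d}{i}}{\ell'-\ell+1}$, via iterated layer-cake sums whose number of indices grows in step with $\binom{d}{i}$; the combinatorial backbone is Pascal's identity $\binom{d}{i}=\binom{d-1}{i}+\binom{d-1}{i-1}$, reflecting the split of the $\binom{d}{i}$ Plücker coordinates into the $\binom{d-1}{i}$ coordinates intrinsic to $\bs^{\llbracket 1,d-1\rrbracket}$ and the $\binom{d-1}{i-1}$ new linear constraints arising from row $d$. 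The final assertion $\mu_{di}(E_\ell(d,i))\leq C_2\,\qnu^{-\ell(d-i+1)}$ then follows by setting $\ell'=\ell$, since $F_{\ell,\ell}=E_\ell$ and the binomial collapses to $\binom{d}{i}$.
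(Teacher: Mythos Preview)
Your induction scheme (outer on $i$, inner on $d\geq i$, with the diagonal case $d=i$ handled via the outer hypothesis at $(i,i-1)$) is exactly the one the paper uses. The base case $i=1$ and the diagonal step are also essentially the same as the paper's Step~0 and Step~2. The divergence is in the inner step $d-1\to d$, and there your plan has a genuine gap.

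The difficulty is this. After fixing $\bs^{\llbracket 1,d-1\rrbracket}$, the conditional Haar measure of admissible $\bs^d$ is governed by the invariant factors $|d_1|\geq\cdots\geq|d_i|$ of your cofactor matrix $M$, and equals $\prod_k\min(1,\qnu^{-\ell}/|d_k|)$. To integrate this over $F_{\ell,\ell'}(d-1,i)$ you must stratify by the $|d_k|$. Now $|d_1|=\|M\|$ is the maximal $(i-1)\times(i-1)$ minor of $\bs^{\llbracket 1,d-1\rrbracket}$, while $|d_1\cdots d_i|$ is the maximal $i\times i$ minor of $M$, which (via Sylvester-type identities) is a \emph{power} of Pl\"ucker coordinates of $\bs^{\llbracket 1,d-1\rrbracket}$, not the Pl\"ucker coordinates themselves; for instance when $d-1=i$ one gets $|\det M|=|c_{\llbracket 1,i\rrbracket}|^{i-1}$. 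So your conditional bound depends jointly on an $(i-1)$-minor quantity and an $i$-minor quantity, and the layer-cake requires estimating
\[
\mu\bigl(F_{\ell,\ell'}(d-1,i)\cap\{\|M\|=\qnu^{-r}\}\bigr),
\]
i.e.\ the measure of a stratum of $F_{\ell,\ell'}(d-1,i)$ cut out by an $(i-1)$-wedge condition. Your inductive hypothesis gives only the total measure of $F_{\ell,\ell'}(d-1,i)$; it says nothing about such strata. The appeal to ``Cauchy--Binet expresses the invariant factors in terms of $i\times i$ minors of $\bs^{\llbracket 1,d-1\rrbracket}$'' is therefore not enough: it controls the \emph{product} of the invariant factors, but the conditional measure is sensitive to their individual sizes, and in particular to the $(i-1)$-minor norm $\|M\|$.

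The paper handles this by proving a strictly stronger intermediate statement first (Lemma~\ref{LemWedgeUpperBound}): it fixes the exact norm $\qnu^{-n_I}$ of \emph{every} Pl\"ucker coordinate simultaneously, bounding $\mu\bigl(D^d_i((n_I)_I)\bigr)\leq C\,\qnu^{-\max_I n_I-(d-i)n_{I'}}$ for some $I'$. Two features of that argument are essential and have no counterpart in your plan. First, the conditional measure is computed not via Smith normal form of all of $M$ but via the adjugate of a \emph{single} $i\times i$ block $\bs_{\hat I}$ (Claim~1), which gives a clean closed-form bound in terms of the prescribed $n_I$'s. Second, the row $w$ to delete and the block $\hat I$ are chosen \emph{adaptively} depending on the tuple $(n_I)_I$ (Claim~2), precisely to force the exponent $\max_I n_I$ to appear. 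Only after this pointwise-in-$(n_I)$ bound is established does one sum over all tuples to recover $F_{\ell,\ell'}$; the polynomial $\binom{\ell'-\ell+\binom{d}{i}}{\ell'-\ell+1}$ then drops out of a straightforward simplicial count. If you want to salvage your direct approach you would need to strengthen the inductive hypothesis to something that tracks both $i$-minor and $(i-1)$-minor information simultaneously --- at which point you are essentially reproving Lemma~\ref{LemWedgeUpperBound}.
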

Given a $\binom{d}{i}$-tuple $(n_I)_{I \in \wp_{i}^d }$ of non-negative integers, consider even more refined subsets
\eq{
	D_{I}^{d}(\ell) = \{ \bs \in M_{d, i}(\cOnu) : | c_I(\bs) |  =  \qnu^{-\ell} \} \text{ and } D^d_i((n_I)_{I}) = \bigcap_{I \in \wp_{i}^d }D_{I}^{d}(n_I).
}
In other words, the set $D^d_i((n_I)_{I})$ is the subset of $\bigwedge^i \mathbf \cOnu^d$ consisting of elements whose coefficient $c_I$ has absolute value exactly $q^{-{n_I}}$, for all $I$.

Since we use the sup norm for $\bigwedge^i \mathbf K_\nu^d$, the set $F_{\ell, \ell'}(d, i)$ is partitioned into
\eq{
  F_{\ell, \ell'}(d, i) = \coprod_{
  (n_I)_{I \in \wp_{i}^d} \, \in \,  \bNN^{\wp_{i}^d} \, : \, n_{\llbracket 1, i \rrbracket} \geq \ell'; \,
  \forall I \in \wp_{i}^d, \, n_{I} \geq \ell
  } 
  D^d_i((n_I)_{I}).
} We will simultaneously prove the following estimate of $\mu_{di}(D^d_i((n_I)_{I}))$. 
We define $\exp_{\qnu}(x) = \qnu^x$ for every $x \in \bR$.
\begin{lem}\label{LemWedgeUpperBound}
For $d, i\in \bNN$ with $i \leq d$, there exists a constant $C = C(d,i) > 0$ such that for any $\binom{d}{i}$-tuple $(n_I)_{I \in \wp_{i}^d}$ of non-negative integers, there exists $I' \in \wp_{i}^d$ such that we have

\eq{
  \mu_{di}(D^d_i((n_I)_{I})) \ \leq \ C \exp_{\qnu}(-\max_{I \in \wp_{i}^d} {n}_{I} - (d-i) {n}_{I'}).
}
\end{lem}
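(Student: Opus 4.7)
The plan is to establish the lemma with $I' = I_0$, where $I_0 \in \wp_i^d$ is any index achieving $\min_I n_I$. By the row-permutation invariance of $\mu_{di}$, I may assume $I_0 = \llbracket 1, i\rrbracket$; let $I^* \in \wp_i^d$ achieve $\max_I n_I$.

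Write $\bs = \begin{pmatrix}A\\B\end{pmatrix}$ with $A \in M_{i,i}(\cO_\nu)$ the top block, so that $c_{I_0}(\bs) = \det A$. On the locus $\{|\det A| = q_\nu^{-n_{I_0}}\}$, the matrix $A$ is invertible over $\mathbf K_\nu$, and a direct Cramer/Laplace computation yields the identity
\[
c_I(\bs) = \pm \det A \cdot \det\!\bigl((BA^{-1})|_{J_1(I), J_2(I)}\bigr),
\]
where $J_1(I) \subset \llbracket 1, d-i\rrbracket$ and $J_2(I) \subset \llbracket 1, i\rrbracket$ are the index sets of size $r := i - |I \cap I_0|$ determined by $I \setminus I_0$ and $I_0 \setminus I$. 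Hence a single-shift ($r = 1$) Plücker constraint fixes an entry norm of $M := BA^{-1}$, and a multi-shift ($r \geq 2$) fixes the norm of an $r \times r$ minor of $M$.

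The proof combines three ingredients. First, the local zeta identity
\[
\int_{M_{i,i}(\cO_\nu)} |\det A|^s \, dA \;=\; \prod_{j=0}^{i-1}\frac{1 - q_\nu^{-j-1}}{1 - q_\nu^{-s-j-1}}
\]
yields, by reading off the $q_\nu^{-s n_{I_0}}$-coefficient, $\mu_{i^2}(\{|\det A| = q_\nu^{-n_{I_0}}\}) \leq C(i)\, q_\nu^{-n_{I_0}}$. Second, the single-shift constraints $|M_{j', k}| = q_\nu^{-(n_{I(j', k)} - n_{I_0})}$, where $I(j', k) := (I_0 \setminus \{k\}) \cup \{j'+i\}$, translate via $B \cdot \mathrm{adj}(A) = \det(A) M$ to conditions on the rows of $B$; using that $\mathrm{adj}(A) \in M_{i,i}(\cO_\nu)$ with $|\det \mathrm{adj}(A)| = q_\nu^{-(i-1) n_{I_0}}$, the measure of $B \in \cO_\nu^{(d-i) \times i}$ satisfying all single-shift conditions is bounded by
\[
C\,q_\nu^{(i-1)(d-i) n_{I_0} - \sum_{\text{single-shift } I} n_I}.
\]
Third, when $I^*$ is a multi-shift, the additional constraint $|\det((BA^{-1})|_{J_1(I^*), J_2(I^*)})| \leq q_\nu^{-(n_{I^*} - n_{I_0})}$ on an $r \times r$ minor contributes a conditional factor $q_\nu^{-(n_{I^*} - n_{I_0})}$ via the local zeta function applied to $M_{r,r}(\cO_\nu)$. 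Using the elementary inequalities $\sum_{\text{single-shift}} n_I \geq i(d-i) n_{I_0}$ (always, by minimality of $n_{I_0}$) and $\sum_{\text{single-shift}} n_I \geq n_{I^*} + (i(d-i) - 1) n_{I_0}$ (when $I^*$ is itself a single-shift), the exponents balance in both cases to give the target bound $\mu_{di}(D^d_i((n_I)_I)) \leq C(d,i)\, q_\nu^{-n_{I^*} - (d-i) n_{I_0}}$.

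The principal obstacle is the third ingredient: with the entries of $M$ already fixed in specified shells by the single-shift constraints, I must show that forcing the $r \times r$ minor to have norm at most $q_\nu^{-(n_{I^*} - n_{I_0})}$ cleanly contributes that factor. This is a cancellation-probability estimate that I would prove by direct computation — rescaling rows and columns of the minor by appropriate powers of $\pi_\nu$ to normalize the entry shells, and then applying the local zeta identity on $M_{r, r}(\cO_\nu)$ to the resulting tube.
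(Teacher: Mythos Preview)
Your block decomposition $\bs=\binom{A}{B}$ and the Pl\"ucker identity $c_I(\bs)=\pm\det A\cdot\det\bigl((BA^{-1})|_{J_1(I),J_2(I)}\bigr)$ are correct, and ingredients 1 and 2 go through. The gap is exactly where you flag it, and your proposed fix does not close it. After imposing all single-shift constraints you have pinned each entry of $M=BA^{-1}$ to a shell $|M_{j,k}|=q_\nu^{-b_{j,k}}$ with $b_{j,k}=n_{I(j,k)}-n_{I_0}\ge0$. Rescaling rows and columns can normalize these shells to the unit shell only if the matrix $(b_{j,k})$ has rank one in the tropical sense, i.e.\ $b_{j,k}=\alpha_j+\beta_k$; in general it does not, so the local zeta identity on $M_{r,r}(\cO_\nu)$ is not applicable to ``the resulting tube''. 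More concretely, set $b^*=\min_{\sigma}\sum_{j\in J_1}b_{j,\sigma(j)}$. If a \emph{single} permutation realizes this minimum then by the ultrametric inequality $|\det(M|_{J_1,J_2})|=q_\nu^{-b^*}$ deterministically and the multi-shift constraint contributes no factor at all, contrary to your claim of $q_\nu^{-(n_{I^*}-n_{I_0})}$. Your accounting then fails unless you simultaneously exploit the slack in ingredient 2 beyond the crude inequality $\sum_{\text{s.s.}}n_I\ge i(d-i)n_{I_0}$, which you do not do.

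In fact the scheme can likely be rescued, but only with a genuinely finer analysis: one must show that whatever is lost in the conditional factor (it is $q_\nu^{-(n_{I^*}-n_{I_0}-b^*)}$ at best, not $q_\nu^{-(n_{I^*}-n_{I_0})}$) is exactly compensated by the excess $\sum_{j,k}b_{j,k}\ge b^*$ in the single-shift sum, and the cancellation case (several permutations achieving $b^*$) needs its own argument rather than an appeal to the zeta integral. By contrast, the paper avoids this difficulty entirely: it peels off \emph{one} row at a time in a double induction $(d,i)\Rightarrow(d+1,i)$, and the freedom to choose \emph{which} row $w_0$ to peel (and which minor $I_0$ to test against) lets it pick these so that only single-shift--type estimates are ever needed (their Claim~2). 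Splitting off the whole bottom block at once, as you do, forfeits that freedom and forces you into the multi-shift case.
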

\begin{remark} The proofs of Lemma \ref{LemWedgeUpperBound} and Theorem \ref{ThmWedgeUpperBound} are carried out simultaneously: while Lemma \ref{LemWedgeUpperBound} for $(d,i)$ implies Theorem \ref{ThmWedgeUpperBound} for $(d,i)$, the proof of Lemma \ref{LemWedgeUpperBound} for $(d,i)$ uses \eqref{ThmWedgeUpperBound_1} from Theorem \ref{ThmWedgeUpperBound} for $(d, i-1)$.
\end{remark}

\begin{proof}[Proofs of Theorem \ref{ThmWedgeUpperBound} and Lemma \ref{LemWedgeUpperBound}]
We proceed by induction on $(d, i)$ with $1 \leq i \leq d$, using the following four steps.

\textbf{Step 0} : Lemma \ref{LemWedgeUpperBound} holds for $(d, 1)$ with $d \geq 1$.

\textbf{Step 1} : If Lemma \ref{LemWedgeUpperBound} holds for $(d, i)$ with $2 \leq i \leq d$, then Lemma \ref{LemWedgeUpperBound} holds for $(d+1, i)$.

\textbf{Step 2} : If Lemma \ref{LemWedgeUpperBound} holds for $(d, i)$ with $1 \leq i \leq d$, then Theorem \ref{ThmWedgeUpperBound} holds for $(d, i)$.

\textbf{Step 3} : If Lemma \ref{LemWedgeUpperBound} and Theorem \ref{ThmWedgeUpperBound} hold for $(d+1, d)$ with $d \geq 1$, then Lemma \ref{LemWedgeUpperBound} holds for $(d+1, d+1)$.

By \textbf{Step 0}, Lemma \ref{LemWedgeUpperBound} holds for $(d, 1)$ with $d \geq 1$. In particular, Lemma \ref{LemWedgeUpperBound} holds for $(2,1)$, which implies Theorem \ref{ThmWedgeUpperBound} for $(2,1)$ by \textbf{Step 2}. Then, \textbf{Step 3} ensures that Lemma \ref{LemWedgeUpperBound} holds for $(2,2)$. For the remaining cases $2 \leq j \leq d$, assume by induction that Lemma \ref{LemWedgeUpperBound} holds for $(j, j)$ with $j \geq 2$. Then Lemma \ref{LemWedgeUpperBound} holds for $(d', j)$ with $2 \leq j \leq d'$ by iterating \textbf{Step 1}, and in particular for $(j+1, j)$ if $j < d$. Hence, Theorem \ref{ThmWedgeUpperBound} holds for $(j+1, j)$ by \textbf{Step 2}. Therefore, Lemma \ref{LemWedgeUpperBound} holds for $(j+1, j+1)$ by \textbf{Step 3}. The two results follow by induction. \\[0.5em]
\noindent \textbf{Proof of Step 0.} Denote $n_j:=n_{\{j\}}.$ Since
\eq{
	D^d_1((n_I)_{I}) = \{\mathbf{s}=(s_{1}, \dots, s_{d}) \in \cOnu^{d} : |s_{j}| = \qnu^{-n_{j}}, \;\;  j \in \llbracket 1, d \rrbracket \ \}, 
} the Haar measure of this set is
\eq{
	\begin{split}
		\mu_{d}(D^d_1((n_I)_{I})) 
		& = \prod_{j = 1}^{d} \mu_{1} (\{s_{j} \in \cOnu : |s_{j}| = \qnu^{-n_{j}}\}) = \prod_{j = 1}^{d} (\frac{1}{\qnu^{ n_{j}}}  - \frac{1}{\qnu^{ n_{j} +1 }}  ) \\
  & \leq \exp_{\qnu}(- \sum_{j=1}^{d} n_{j}) \leq \exp_{\qnu}(- \max\limits_{1\leq j\leq d} n_{j} - (d-1)\min\limits_{1 \leq j \leq d} n_{j} ),
 \end{split}
} which implies Lemma \ref{LemWedgeUpperBound} for $(d, 1)$ for any $d \in \bNN$.\\[0.5em]
\noindent \textbf{Proof of Step 1.} Suppose that Lemma \ref{LemWedgeUpperBound} holds for $(d, i)$ with $2 \leq i \leq d$. 
We will divide $(d+1)i$ variables into $di$ and $i$ variables in several ways, and apply Fubini's theorem.
We will obtain several upper bounds of partial integral (\textbf{Claim 1}),
and find an appropriate bound among them (\textbf{Claim 2}). 
Finally, we use the induction hypothesis for another partial integral such that we have the required upper bound for the measure of $D^{d+1}_i((n_I)_I)$.

Fix $w \in \llbracket 1, d+1\rrbracket $. We will choose an optimal $w$ later in \textbf{Claim 2}.
Let $\{\be_{kl}\}_{1 \leq k \leq d+1, 1 \leq l \leq i}$, $\{\mathbf{f}_{kl}\}_{1 \leq k \leq d, 1 \leq l \leq i}$, and $\{\be_{l}\}_{1 \leq l \leq i}$ be the standard $\cOnu$-bases of $M_{d+1, i}(\cOnu)$, $M_{d, i}(\cOnu)$, and $\cOnu^i$, respectively. 
Define $\cOnu$-linear maps $\phi = \phi(d,i,w)$ from $M_{d+1, i}(\cOnu)$ to $M_{d, i}(\cOnu)$ suppressing $w$-th row and shifting the following rows by 1, and $\psi = \psi(d,i,w)$ from $M_{d+1, i}(\cOnu)$ to $\cOnu^i$ as follows: let the map $\theta_{w}:\llbracket 1, d\rrbracket \to \llbracket 1, d+1\rrbracket$ be given by $\theta_{w}(k) = k$ if $k < w$ and $\theta_{w}(k) = k+1$ otherwise, and define
\eq{
	\phi(\be_{k l}) =
	\begin{cases}
		\mathbf{f}_{\theta_{w}^{-1}(k) l} &\text{ if } k \ne w \\
		\textbf{0} &\text{ if } k = w,
	\end{cases} \quad \quad
	\psi(\be_{k l}) = \begin{cases}
		\be_{l} &\text{ if } k = w \\
		\textbf{0} &\text{ if } k \ne w .
	\end{cases} 
} 
Note that these maps are well-defined for any integers $d,i\ge1$ and 
$w\in\llbracket1,d+1\rrbracket$, independently of the assumption $2 \le i\le d$ used in this step.
Denote $\theta_{w}(\wp_{i}^{d}) = \{\theta_{w}( \tilde{I} ) : \tilde{I} \in \wp_{i}^{d} \}$, 
i.e., the set of indices $I \in \wp^{d+1}_i $ which do not contain $w$.
Note that the product map $\phi \times \psi$ from $M_{d+1, i}(\cOnu)$ to $M_{d, i}(\cOnu) \times \cOnu^i$ is simply shifting the $w$-th row to the last row and shifting the other rows accordingly. It is clearly bijective, linear and the pushforward measure of $\overline{\mu_{di}} \times \overline{\mu_{i}}$ by $(\phi \times \psi)^{-1}$ is $\overline{\mu_{(d+1)i}}$ by the uniqueness of probability Haar measure on $M_{d+1,i}(\cOnu)$.

Let $D = D^{d+1}_i((n_I)_I)$ and $\overline{D}_w = D^d_i((n_{\theta_{w}( \tilde{I} )})_{\tilde{I} \in \wp_{i}^{d}})$. 
By ignoring the conditions for indices $I$ containing $w$, we have
\eq{
	D \subseteq \bigcap_{I \in \theta_{w}(\wp_{i}^{d}) } D_{I}^{d+1}(n_I) 
	= \phi^{-1}(\overline{D}_w).
}
Then $(\phi \times \psi)(D) \subseteq \overline{D}_w \times \cOnu^i$.
Using Fubini's theorem, we have
\eqlabel{Eq_Fubini}{
\begin{split}
\mu_{(d+1)i}(D) 
&= \int_{(\phi \times \psi)^{-1}(\overline{D}_w \times \cOnu^i)} \mathds{1}_{D}(\bs) d\mu_{(d+1)i}(\bs) \\
&= \int_{\overline{D}_w} \int_{\cOnu^{i}} \mathds{1}_{D}((\phi \times \psi)^{-1}(\bs_1, \bs_2))  d\mu_i(\bs_2) d\mu_{di}(\bs_1).
\end{split}
}

For a matrix $\bs \in  \overline{D}_w$, 
an index $J= \{k_1, \cdots, k_{i-1}\} \in \wp_{i-1}^{d}$ with $k_1 < \cdots < k_{i-1}$
and $\bv \in \cOnu^{i}$, let  
$\MIv{J}$ be the $i\times i$ matrix whose $j$-th row is the $k_j$-th row vector of $\bs$ for $j\in \llbracket 1, i-1\rrbracket$ and the last row is $\bv.$
Given $H \in \bigcup_{j \geq i-1} \wp_{j}^{d} $, define
\eq{
	\Xpt{H} = \{ \bv \in \cOnu^{i} : 
| \text{det} \MIv{J} |  = \ \exp_{\qnu}(-n_{ \theta_{w}(J) \sqcup \{w\}}),\ \ \forall J \in \wp_{i-1}(H) \}.	
} 

Observe that for any $\bs_1 \in \overline{D}_w$ and $\bs_2 \in \cOnu^i$, we have
\eqlabel{Eq_X_Rep}{
\mathds{1}_{D}((\phi \times \psi)^{-1}(\bs_1, \bs_2)) \leq \mathds{1}_{X_{\llbracket 1, d \rrbracket}(\bs_1)}(\bs_2)
}
since if $\bs = (\phi \times \psi)^{-1}(\bs_1, \bs_2) \in D$, then for any $J \in \wp_{i-1}^{d}$, 
\eq{
|\text{det}M_J(\bs_1, \bs_2)|=|c_{\theta_{w}(J) \sqcup \{w\}} (\bs)| = \exp_{\qnu}(-n_{ \theta_{w}(J) \sqcup \{w\}}),
}
where the first equality follows from the definitions of $\phi$ and $\psi$: 
indeed, $\bs_1 = \phi(\bs)$ consists of the rows of $\bs$ with indices in $\theta_w(J)$ 
and $\bs_2 = \psi(\bs)$ is the $w$-th row of $\bs$; 
and the second equality follows from the assumption $\bs \in D$ and the definition of the set $D$.

\noindent \textbf{Claim 1.} For $\bs \in \overline{D}_w$ and $\hat{I} \in \wp_{i}^{d},$ we have
$$	\mu_{i}( \Xball{\hat{I}} ) \leq \exp_{\qnu}\Bigl( (i-1)n_{\theta_w(\hat{I})} - \sum\limits_{J \in \wp_{i-1}(\hat{I})} n_{\theta_w(J) \sqcup \{w\}} \Bigr).$$
\begin{proof}[Proof of Claim 1]
Set $\hat{I} = \{ \iota_1, \dots, \iota_i\}$. 
Consider an $(i, i)$-matrix $\bs_{\hat{I}}$ obtained by keeping the $i$-th rows of $\mathbf s$, for $ i \in \hat{I}$. For each $1 \leq k, l \leq i$, let $\rho_{k l}(\bs)$ be the $(k, l)$-entry of the adjugate matrix $\text{adj}(\bs_{\hat{I}})$ of $\bs_{\hat{I}}$.
Note that for any $k \in \llbracket 1, i \rrbracket$ and $\bv = (v_j) \in \cOnu^{i}$,
\eq{
	|\text{det}\MIv{\hat{I} - \{ \iota_k \} }| = |\rho_{k 1}(\bs)v_1 + \cdots + \rho_{k i}(\bs)v_i|.
} 

Set $n_k:=n_{\theta_w(\hat{I} - \{ \iota_k \}) \sqcup \{w\} }$ for the rest of the proof of Claim 1.  By considering $\text{adj}(\bs_{\hat{I}})$ as a linear transformation,
\eq{
	\begin{split}
		\Xball{\hat{I}}
	& = \{ \bv \in \cOnu^{i} : | \text{det}\MIv{\hat{I} - \{ \iota_k \} } |  = \qnu^{-n_k} ,\ k \in \llbracket 1, i \rrbracket \} \\
	& = \{ \bv \in \cOnu^{i} : | \sum_{l =1}^{i} \rho_{k l}(\bs) v_l |  = \qnu^{-n_k} , \ k \in \llbracket 1, i \rrbracket \} \\
	& = \text{adj}(\bs_{\hat{I}})^{-1}(\{({v}_{1}', \dots, {v}_{i}') \in \mathbf K_{\nu}^{i} : |{v}_{k}'| = \qnu^{-n_k}, \ k \in \llbracket 1, i \rrbracket \}) .
	\end{split}
} By the property of the adjugate matrix, we have
\eqlabel{Eq_X_Transform}{
	\begin{split}
	\mu_{i}(\Xball{\hat{I}})	
	&= |\text{det}(\text{adj}(\bs_{\hat{I}})^{-1})|  \, \mu_{i} (\prod_{k=1}^{i} \{{v}_{k}' : |{v}_{k}'| = \qnu^{-n_k} \} )  \\
	&\leq |\text{det}(\text{adj}(\bs_{\hat{I}})^{-1})| \prod_{k=1}^{i} \qnu^{-n_k} \\
	&= |\text{det}(\bs_{\hat{I}})^{-(i-1)}| \exp_{\qnu}( - \sum\limits_{k=1}^{i} n_k).
	\end{split}
}
 Since $\bs \in \overline{D}_w$ and $\hat{I} \in \wp_{i}^{d}$, we have $\bs \in D_{\hat{I}}^{d}(n_{\theta_w(\hat{I})})$,
 and thus $|\text{det}(\bs_{\hat{I}})| = |c_{\hat{I}}(\bs)| = \qnu^{-n_{\theta_w(\hat{I})}}$. Combining with \eqref{Eq_X_Transform}, we have
\eq{
	\mu_{i}( \Xball{\hat{I}} ) \leq \exp_{\qnu}\Bigl( (i-1)n_{\theta_w(\hat{I})} - \sum\limits_{k=1}^{i} n_{\theta_w(\hat{I}- \{ \iota_k \} ) \sqcup \{w\} } \Bigr).
}
\end{proof}
Hence, for any $\hat{I} \in \wp_i^d$, it follows from \eqref{Eq_Fubini}, \eqref{Eq_X_Rep}, and \textbf{Claim 1} that
\eqlabel{Eq_Wedge_Main_Bound}{
\begin{split}
\mu_{(d+1)i}(D) 
&\leq \int_{\overline{D}_w} \int_{\cOnu^{i}} \mathds{1}_{X_{\llbracket 1, d \rrbracket}(\bs_1)}(\bs_2)  d\mu_i(\bs_2) d\mu_{di}(\bs_1) \\
&\leq \int_{\overline{D}_w} \int_{\cOnu^{i}} \mathds{1}_{X_{\hat{I}}(\bs_1)}(\bs_2)  d\mu_i(\bs_2) d\mu_{di}(\bs_1) \\
&\leq \int_{\overline{D}_w} \exp_{\qnu} \Bigl( (i-1)n_{\theta_w(\hat{I})} - \sum\limits_{J \in \wp_{i-1}(\hat{I})} n_{\theta_w(J) \sqcup \{w\}} \Bigr) d\mu_{di}(\bs_1) \\
&= \exp_{\qnu} \Bigl( (i-1)n_{\theta_w(\hat{I})} - \sum\limits_{J \in \wp_{i-1}(\hat{I})} n_{\theta_w(J) \sqcup \{w\}} \Bigr) \mu_{di}(\overline{D}_w).
\end{split}
}

\noindent \textbf{Claim 2.}\; There exist $w_0 \in \llbracket 1, d+1\rrbracket$ and $I_0 \in \theta_{w_0}(\wp_{i}^{d})$ satisfying
\eqlabel{Eq_Max_Finder}{
	(i-1)n_{I_0} - \sum\limits_{J \in \wp_{i-1}(I_0)} n_{J \sqcup \{ w_0 \}} \leq -\max_{I \in \wp_{i}^{d+1}} n_I.
}
\begin{proof}[Proof of Claim 2]
Let $I_{\max}$ be an element in
$\wp_{i}^{d+1}$ such that ${n}_{I_{\max}}= \max\limits_{I \in \wp_{i}^{d+1}} n_I$.
Consider the subset $\cQ$ of $\wp_i^{d+1}$ whose element $I$ differs from $I_{\max}$ by one digit, i.e., $|I_{\max} \smallsetminus I | =1$.
Let $I_0 \in \cQ$ be such that $n_{I_0} = \min\limits_{I \in \cQ} n_I$. Note that by construction, there exist $w_0, w_1 \in \llbracket 1, d+1\rrbracket$, and $J_0 \in \wp_{i-1}(I_{\max})$ satisfying $I_{\max} = J_0 \sqcup \{ w_0\}$, $I_0 = J_0 \sqcup \{ w_1\}$, $w_0 \notin I_0$, and $w_1 \notin I_{\max}$.

We claim that the pair ($w_0$, $I_0$) satisfies \eqref{Eq_Max_Finder}.
Since $ I_0$ does not contain $w_0$, clearly $I_0 \in \theta_{w_0}(\wp_{i}^{d})$.
Let $J \in \wp_{i-1}(I_0) \smallsetminus \{ J_0 \}$. Then we have $w_1 \in J$ and $J \smallsetminus \{ w_1 \} \subset J_0 \subset I_{\max}$. By definition, we have $(J \smallsetminus \{w_1 \}) \sqcup \{ w_0 \} \in \wp_{i-1}{(I_{\max})}$. Hence, $J \cup \{ w_0 \} = ((J \smallsetminus \{ w_1 \}) \sqcup \{ w_0 \} ) \sqcup \{ w_1 \} \in \cQ$, which implies that 
\eq{
	n_{J \cup \{ w_0 \} } \geq \min_{I \in \cQ} n_I = n_{I_0}.
}
Since $J_0 \sqcup \{w_0\} = I_{\max}$, it follows that
\eq{
	 \sum_{J \in \wp_{i-1}(I_0)} n_{J \sqcup \{w_0\}} = n_{J_0 \sqcup \{w_0\}} + \sum_{J \in \wp_{i-1}(I_0) \smallsetminus \{J_0\}} n_{J \sqcup \{w_0\}} \geq  n_{I_{\max}} + (i-1)n_{I_0},
} which shows that ($w_0$, $I_0$) satisfies (\ref{Eq_Max_Finder}).
\end{proof}
Hence, it follows from \eqref{Eq_Wedge_Main_Bound} and \textbf{Claim 2} that
\eqlabel{Eq_Wedge_Main_Bound2}{
	\mu_{(d+1)i}(D) \leq  \exp_{\qnu}{(-\max_{I \in \wp_{i}^{d+1}}n_I)} \mu_{di}(\overline{D}_{w_0}).
}

By the induction hypothesis that Lemma 3.3 holds for $(d, i)$, there exist a constant $C = C(d,i) > 0$ and $\tilde{I}' \in \wp_i^d$ such that
\eq{
\begin{split}
 \mu_{di}(\overline{D}_{w_0}) 
 &\leq C \exp_{\qnu}{(-\max_{\tilde{I} \in \wp_{i}^d } {n}_{\theta_{w_0}(\tilde{I})}  - (d-i) {n}_{\theta_{w_0}(\tilde{I}')})} \\
 &\leq  C \exp_{\qnu}{(- (d-i+1) {n}_{\theta_{w_0}(\tilde{I}')})}.    
\end{split}
} 
Combining with (\ref{Eq_Wedge_Main_Bound2}), we obtain
\eq{
	\mu_{(d+1)i}(D)
	\leq C \exp_{\qnu}{(-\max_{I \in \wp_{i}^{d+1}} n_I  - (d-i+1) {n}_{\theta_{w_0}(\tilde{I}')})}.
} Therefore, by letting $I' := \theta_{w_0}(\tilde{I}') \in \wp_{i}^{d+1}$, Lemma \ref{LemWedgeUpperBound} holds for $(d+1, i)$, which concludes \textbf{Step 1}.\\[0.5em]
\noindent \textbf{Proof of Step 2.} 
Suppose that Lemma \ref{LemWedgeUpperBound} holds for $(d, i)$ with $1 \leq i \leq d$.
Denote $\wp_{i}^d = \{I_1, I_2, \cdots, I_k\}$ with $k = \binom{d}{i}$ and define
\eq{
	T_{\ell, \ell'} = \coprod_{(n_I)_{I \in \wp_{i}^d} \, \in \,  \bNN^{\wp_{i}^d} \, : \,  \ell \leq n_{I_1} \leq \dots \leq n_{I_k}, \ \ell' \leq n_{\llbracket 1, i \rrbracket}} D^d_i((n_I)_{I})
} and for $r \in \llbracket 0, k \rrbracket$, 
\eq{
	S_{\ell, \ell'}(r) = \coprod_{(n_I)_{I \in \wp_{i}^d} \, \in \,  \bNN^{\wp_{i}^d} \, : \,  \ell \leq n_{I_1} \leq \dots \leq n_{I_r} \leq \ell' \leq n_{I_{r+1}} \leq \dots \leq n_{I_k}} D^d_i((n_I)_{I}).
} Since $\llbracket 1, i \rrbracket \in \wp_{i}^{d}$ and $\ell' \leq n_{\llbracket 1, i \rrbracket}$ in $T_{\ell, \ell'}$, we have
\eqlabel{Eq_Wedge_Sym}{
	T_{\ell, \ell'} \subseteq \bigcup_{r = 0}^{k-1} S_{\ell, \ell'}(r).
} By Lemma \ref{LemWedgeUpperBound} for $(d,i)$, there exists a constant $C = C(d,i) > 0$ such that
\eq{
	\mu_{di}(D^d_i((n_I)_{I})) \ \leq \ C \exp_{\qnu}{(-\max_{I \in \wp_{i}^d} {n}_{I} - (d-i) \min_{I \in \wp_{i}^d} {n}_{I})}.
}
Hence, we can estimate the Haar measure of $S_{\ell, \ell'}(r)$ as
\begin{align}
	&\mu_{di}(S_{\ell, \ell'}(r)) 
	= \sum_{{\ell \leq n_{I_1} \leq \dots \leq n_{I_r} \leq \ell' \leq n_{I_{r+1}} \leq \dots \leq n_{I_k}}} \mu_{di}(D^d_i((n_I)_{I})) \nonumber \\
	&\leq \sum_{{\ell \leq n_{I_1} \leq \dots \leq n_{I_r} \leq \ell' \leq n_{I_{r+1}} \leq \dots \leq n_{I_k}}} C \exp_{\qnu}{(-\max_{I \in \wp_{i}^d} {n}_{I} - (d-i) \min_{I \in \wp_{i}^d} {n}_{I}) } \nonumber \\
	&\leq \sum_{n_{I_r} = \ell}^{\ell'} \cdots \sum_{n_{I_1} = \ell}^{n_{I_2}}  \sum_{n_{I_{r+1}} = \ell'}^{\infty} \cdots \sum_{n_{I_{k}} = n_{I_{k-1}}}^{\infty} C \qnu^{-n_{I_k} - (d-i)\ell}\ \ \ \ \ \ \ \ \ \ \ \ \ \ \ \ \ \ \  \nonumber
\end{align}
\begin{align}
	&\leq \sum_{n_{I_r} = \ell}^{\ell'} \cdots \sum_{n_{I_1} = \ell}^{n_{I_2}}  \sum_{n_{I_{r+1}} = \ell'}^{\infty} \cdots \sum_{n_{I_{k-1}} = n_{I_{k-2}}}^{\infty} C (1 - \qnu^{-1})^{-1} \qnu^{-n_{I_{k-1}} - (d-i)\ell} \nonumber \\
	&\ \ \ \ \ \ \ \ \ \ \ \ \ \ \ \ \ \ \ \ \ \ \ \ \ \ \ \ \ \ \ \ \ \ \ \ \ \ \ \ \ \ \ \ \ \vdots \nonumber \\
	&\leq \sum_{n_{I_r} = \ell}^{\ell'} \cdots \sum_{n_{I_1} = \ell}^{n_{I_2}}  C (1 - \qnu^{-1})^{-k+r} \qnu^{-{\ell'} - (d-i)\ell} \nonumber \\
	&\leq C (1 - \qnu^{-1})^{-k} \qnu^{-{\ell'} - (d-i)\ell} \sum_{n_{I_r} = \ell}^{\ell'} \cdots \sum_{n_{I_1} = \ell}^{n_{I_2}} 1. \nonumber \\
	&= C (1 - \qnu^{-1})^{-k} \qnu^{-{\ell'} - (d-i)\ell} \binom{\ell' - \ell + r}{r}, \nonumber
\end{align}
where the last equality holds by setting $m_{I_{j}} = n_{I_{j}} - (\ell - 1)$ and using a simplicial polytopic number
\eq{
	\sum_{n_{I_r} = \ell}^{\ell'} \cdots \sum_{n_{I_1} = \ell}^{n_{I_2}} 1 = \sum_{m_{I_r} = 1}^{\ell'-\ell+1} \cdots \sum_{m_{I_1} = 1}^{m_{I_2}} 1 = \binom{\ell' - \ell + r}{r}.
}
Hence, it follows from \eqref{Eq_Wedge_Sym} and Fermat's identity that
\eq{
	\begin{split}
		\mu_{di}(T_{\ell, \ell'}) &\leq C (1 - \qnu^{-1})^{-k} \qnu^{-\ell' - (d-i)\ell} \sum_{r = 0}^{k-1}  \binom{\ell' - \ell + r}{r} \\
		&\leq C (1 - \qnu^{-1})^{-k} \qnu^{-\ell' - (d-i)\ell} \binom{\ell' - \ell + k}{k - 1},
	\end{split}
} and thus, we obtain by symmetry that
\eqlabel{Eq_Wedge_Sym_Final}{
	\mu_{di}(F_{\ell,\ell'}(d, i)) \leq (k!) \mu_{di}(T_{\ell, \ell'}) \leq C_1  \qnu^{-\ell' - (d-i)\ell} \binom{\ell' - \ell + k}{k - 1},
} where $C_1 = C \max\limits_{1 \leq i \leq d}{k!} (1 - \qnu^{-1})^{-k} $ is a constant depending on $d$ and $i$.

For $E_{\ell}(d,i)$, apply $\ell = \ell'$ to \eqref{Eq_Wedge_Sym_Final}. Then
\eq{
	\mu_{di}(E_{\ell}(d, i)) \leq C_1 k \qnu^{-(d-i + 1)\ell} \leq C_2  \qnu^{-(d-i + 1)\ell},
} where $C_2 = C_1\max\limits_{1 \leq i \leq d}k$ depends on $d$ and $i$. \\[0.5em]
\noindent \textbf{Proof of Step 3.} 
Suppose that Lemma \ref{LemWedgeUpperBound} holds for $(d+1, d)$ and consider the case of $(d+1, d+1)$.
Since $\wp_{d+1}^{d+1} = \{\llbracket 1, d+1 \rrbracket\}$, it is sufficient to show that there exists a constant $C=C(d) > 0$ such that for any $N \geq 0$, we have
\eq{
\mu_{(d+1)^2}(D_{\llbracket 1, d+1\rrbracket}^{d+1} (N)) \leq C \qnu^{-N}. 
} 

Let $D = D_{\llbracket 1, d+1\rrbracket}^{d+1} (N)$. Define a partition $\{R_\ell\}_{\ell \geq 0}$ of $D$ by
\eq{
R_\ell = \{ \bs \in D  :  \|  \bs^1 \wedge \dots \wedge \bs^d \|  \ \ = \ \qnu^{-\ell} \},
} where $\bs^j$ is the $j$-th row vector of $\bs$.
Denote by $\phi = \phi(d, d+1, d+1) $ and $\psi = \psi(d, d+1, d+1) $ the $\cO$-linear maps as in \textbf{Step 1}, that is, $\psi$ is the projection to the $(d+1)$-th row, and $\phi$ is the projection to the remaining rows. 
Then it follows from Fubini's theorem that
\eqlabel{Eq_Step3_1}{
\begin{split}
	&\mu_{(d+1)^2}(R_\ell) = \int_{(\phi \times \psi)^{-1}(\phi(R_\ell) \times \cOnu^{d+1})} \mathds{1}_{R_\ell}(\bs) d\mu_{(d+1)^2}(\bs) \\
	&= \int_{\phi(R_\ell)} \int_{\cOnu^{d+1}} \mathds{1}_{R_\ell}( (\phi \times \psi)^{-1}(\bs_1, \bs_2) ) d\mu_{d+1}(\bs_2) d\mu_{d(d+1)}(\bs_1).	
\end{split}
}
For $\bs \in \phi(R_\ell)$ and $\bv \in \cOnu^{d+1}$, 
let $\Mllv$ be the $(d+1)\times (d+1)$ matrix with rows $\bs^1, \dots, \bs^d$ and $\bv$.
Consider the following set
\eq{
	\Xpt{\llbracket 1, d\rrbracket} := \{ \bv \in \cOnu^{d+1} :  |  \text{det} \Mllv |  = \ \qnu^{-N } \}.
} 
Similarly to \eqref{Eq_X_Rep}, for any $\bs_1 \in \phi(R_\ell)$ and $\bs_2 \in \cOnu^{d+1}$, we have
\eqlabel{Eq_Step3_2}{
	\mathds{1}_{R_\ell}( (\phi \times \psi)^{-1}(\bs_1, \bs_2) ) \leq \mathds{1}_{X_{\llbracket 1, d\rrbracket}(\bs_1)}(\bs_2).	
}
For $j \in \llbracket 1, d+1 \rrbracket$, let $\tau_{(d+1) , \;  j} (\bs)$ be the $(d+1, j)$-cofactor of $\Mllv$. 
Since $\bs \in \phi(R_\ell)$, we have $\max\limits_{1 \leq j \leq d+1}(|\tau_{(d+1) , \;  j}(\bs)|) =  \| \bs^1 \wedge \dots \wedge \bs^d \| = \qnu^{-\ell}$. 
Note that for any nonzero vector $\ba = (a_j)_{1 \leq j \leq d+1} \in \cOnu^{d+1} 
$ such that $\| \ba \| = |a_{j_0}|$ for some $j_0 \in \llbracket 1, d+1 \rrbracket$, if we denote by $(v_j)_{1 
\leq j \leq d+1}$ the coordinates on $\cOnu^{d+1}$ and set $\bv' = (v_1, \dots, \hat{v_{j_0}}, \dots, v_{d+1})$, then by Fubini's theorem we can write
\eq{
\begin{split}
&\mu_{d+1}(\{(v_j)_{1 
\leq j \leq d+1} \in \cOnu^{d+1} : | 
\sum_{j=1}^{d+1} a_j v_j| \leq \qnu^{-N} \})  \\
&= \int_{\cOnu^d} \int_{\cOnu} \mathds{1}_{\{v_{j_0} \, :\, | a_{j_0} v_{j_0} + \sum\limits_{j \ne j_0} a_j v_j| \leq \qnu^{-N} \}}(v_{j_0}) \, d\mu_1(v_{j_0}) d\mu_d (\bv').
\end{split} 
}
For each fixed $\bv' \in \cOnu^{d}$, the inner integral equals the measure of
\eq{
    B_{\bv'} = \{v_{j_0} \in \cOnu : | a_{j_0} v_{j_0} + \sum\limits_{j \ne j_0} a_j v_j| \leq \qnu^{-N} \}
} which is either the whole $\cOnu$ or a ball of radius $|a_{j_0}|^{-1} \qnu^{-N}$ centered at $-\sum_{j\ne j_0} a_{j_0}^{-1}a_j v_j$. Since $\mu_1(B_{\bv'}) \leq |a_{j_0}|^{-1} \qnu^{-N}$, integrating over $\bv'$ gives
\eq{
\begin{split}
&\mu_{d+1}(\{(v_j)_{1 
\leq j \leq d+1} \in \cOnu^{d+1} : | 
\sum_{j=1}^{d+1} a_j v_j| \leq \qnu^{-N} \})  \\
&\leq \int_{\cOnu^d} \frac{\qnu^{-N}}{|a_{j_0}|} d\mu_d (\bv') = \frac{\qnu^{-N}}{\| \ba \|}.
\end{split}
}

Hence, by the cofactor expansion of $\det \Mllv$ along $(d+1)$-th row, we have
\eqlabel{Eq_Strip}{
\begin{split}
 \mu_{d+1}(\Xball{\llbracket 1, d\rrbracket}) 
	&= \mu_{d+1}(\{ (v_j) \in \cOnu^{d+1} :  | \sum_{j=1}^{d+1} \tau_{(d+1) , \;  j} (\bs) v_j | = \qnu^{-N} \}) \\
	&\leq \frac{\qnu^{-N }}{\max\limits_{1 \leq j \leq d+1}(|\tau_{(d+1) , \;  j} (\bs)|)} = \qnu^{-N + \ell}.
\end{split}
} 
Therefore, it follows from \eqref{Eq_Step3_1}, \eqref{Eq_Step3_2}, and \eqref{Eq_Strip} that
\eqlabel{Eq_Section_Bound}{
\begin{split}
 \mu_{(d+1)^2}(R_\ell) 
	&\leq \int_{\phi(R_\ell)} \mu_{d+1}(X_{\llbracket 1, d\rrbracket}(\bs_1) ) d\mu_{d(d+1)}(\bs_1) \\ 
	&\leq \qnu^{-N+\ell} \mu_{d(d+1)}(\phi(R_\ell)). 
\end{split}
}
Observe that $\phi(R_\ell) \subset \{{}^{t}\!A : A \in E_{\ell}(d+1, d) \}$, where ${}^{t}\!A$ denotes the transpose of $A$. By the induction hypothesis on $(d+1, d)$ of \textbf{Step 3} and by \textbf{Step 2} for $(d+1, d)$, we have
\eqlabel{Eq_Core}{
\begin{split}
\mu_{d(d+1)} (\phi(R_\ell)) 
&\leq \mu_{(d+1)d} (E_{\ell}(d+1, d)) \\
&\leq C_2(d+1, d) \qnu^{-\ell((d+1) -d + 1)} 
= C_2(d+1, d) \qnu^{-2 \ell}.
\end{split}
} Therefore, for any $N \geq 0$, it follows from \eqref{Eq_Section_Bound} and \eqref{Eq_Core} that
\begin{align} \nonumber
	\mu_{(d+1)^2}(D)
	&= \sum_{\ell = 0}^{\infty} \mu_{(d+1)^2}(R_\ell) \leq \sum_{\ell = 0}^{\infty} \qnu^{-N + \ell } \mu_{d(d+1)} (\phi(R_\ell)) \nonumber \\
	&\leq \sum_{\ell = 0}^{\infty} \qnu^{-N + \ell } C_2(d+1, d) \qnu^{-2 \ell} = C_2(d+1, d) (1 - \qnu^{-1})^{-1} \qnu^{-N }, \nonumber
\end{align}
which completes \textbf{Step 3} with $ C(d+1, d+1) = C_2(d+1, d) (1 - \qnu^{-1})^{-1}$.
\end{proof}

Now, we will prove the analogs of Lemma 3.2 and Lemma 3.4 in \cite{KKLM}. Recall from Section \ref{sec2} that the restriction measure $\overline{\mu_{d}}$ of the Haar measure is a $\SL_d(\cOnu)$-invariant probability measure on $\cOnu^d$.
\begin{lem}\label{LemWedgeExp}
	Let $i\in \llbracket 1, d \rrbracket$ and $\bx_1,\dots, \bx_i$ be independent, identically distributed $\SL_d(\cOnu)$-invariant random variables on ${\mathbf K_{\nu}^{d}}$ with law $\overline{\mu_{d}}$. Then
\eq{
\bE(\|\bx_1 \wedge \cdots\wedge \bx_i\|^{-\beta})<\infty
} for all $\beta<d-i+1$.
\end{lem}
\begin{proof}
Recall that $\bs_j$ is the $j$-th column vector of $\bs \in M_{d, i}(\cOnu)$ for every $j \in \llbracket 1, i \rrbracket$. We will compute
\begin{align} \nonumber
\bE(\|\bx_1 \wedge \cdots \wedge \bx_i\|^{-\beta}) & = \int_{M_{d, i}({\mathbf K_{\nu}})}  \|  \bigwedge_{j = 1}^{i}
\bs_j \|^{-\beta} \ 
d\overline{\mu_{di}}(\bs) \\ \nonumber
&= \int_{M_{d, i}(\cOnu)}  \|  \bigwedge_{j = 1}^{i}
\bs_j \|^{-\beta} \ d\mu_{di}(\bs).
\end{align}
Since the norm takes values in $\qnu^\bZ \cup \{ 0\}$, consider the partition  $\{E_{\ell}\}_{\ell \geq 0}$ of $M_{d, i}(\cOnu)$, where 
\eqlabel{Eq_Basic_Decomp}{
E_{\ell} = E_{\ell}(d, i) =  \{ \bs \in M_{d, i}(\cOnu)  :  \|  \bigwedge_{j = 1}^{i}
\bs_j  \|   =  \qnu^{-\ell}
 \}.
}
By Theorem \ref{ThmWedgeUpperBound}, there exists a constant $C_2 = C_2 (d) > 0$ such that $\mu_{di}(E_{\ell}) \leq C_2 \qnu^{-\ell(d-i+1)}$ for any $\ell \geq 0$. Hence, we have
\begin{align}
	\bE(\|\bigwedge_{j=1}^{i} \bx_j \|^{-\beta})
	&= \sum_{\ell = 0}^{\infty} \int_{E_{\ell}}  \|  \bigwedge_{j = 1}^{i}
	\bs_j  \|^{-\beta} \ d\mu_{di}(\bs) 
	= \sum_{\ell = 0}^{\infty} \int_{E_{\ell}}  (\qnu^{-\ell})^{-\beta} \ d\mu_{di}(\bs) \nonumber \\
	&= \sum_{\ell = 0}^{\infty} \qnu^{\ell\beta} \mu_{di}(E_{\ell}) \leq C_2 \sum_{\ell = 0}^{\infty} \qnu^{\ell\beta} \qnu^{-\ell(d-i+1)}, \nonumber
\end{align}
which is finite for any $\beta<d-i+1$.
\end{proof}

\begin{lem}\label{LemWedgeErrExp}
Let $\bx_1, \dots, \bx_i$ be random variables as in Lemma \ref{LemWedgeExp} and $C_2 = C_2 (d)$ the constant as in Theorem \ref{ThmWedgeUpperBound}. Then for every $\kappa \in \bNN$, we have
\eq{
\bE(\|\bx_1 \wedge \dots \wedge \bx_i \|^{-(d-i+1)} \mathds{1}(\|\bx_1 \wedge \dots \wedge \bx_i \| > \qnu^{-\kappa})) \leq C_2 \kappa.
}
\end{lem}
\begin{proof}
Recall $E_\ell$ as in \eqref{Eq_Basic_Decomp}. Since $\mu_{di}(E_{\ell}) \leq C_2 \qnu^{-\ell(d-i+1)}$ and
\eq{
	\{ \bs \in M_{d, i}(\cOnu)  :  \|  \bigwedge_{j = 1}^{i}
	\bs_j  \|  > \qnu^{-\kappa}
	\} = \bigcup_{\ell = 0}^{\kappa - 1} E_\ell,
} we have
\eq{
	\begin{split}
		&\bE(\| \bigwedge_{j = 1}^{i} \bx_j \|^{-(d-i+1)} \mathds{1}(\| \bigwedge_{j = 1}^{i} \bx_j \| > \qnu^{-\kappa})) \ \\
		& = \int_{M_{d, i}({\mathbf K_{\nu}})}  \|  \bigwedge_{j = 1}^{i}
		\bs_j  \|^{-(d-i+1)} \mathds{1}(\|\bigwedge_{j = 1}^{i}
		\bs_j \| > \qnu^{-\kappa})  \ 
d\overline{\mu_{di}}(\bs) \nonumber \\
		&= \int_{M_{d, i}(\cOnu)}  \|  \bigwedge_{j = 1}^{i}
		\bs_j  \|^{-(d-i+1)} \mathds{1}(\|\bigwedge_{j = 1}^{i}
		\bs_j \| > \qnu^{-\kappa})  \ d\mu_{di}(\bs) \nonumber \\
        &= \sum_{\ell = {0}}^{\kappa - 1} \int_{E_{\ell}}  \|  \bigwedge_{j = 1}^{i}
		\bs_j  \|^{-(d-i+1)} \ d\mu_{di}(\bs) \nonumber \\
		&= \sum_{\ell = {0}}^{\kappa - 1} \int_{E_{\ell}}  {(\qnu^{-\ell})}^{-(d-i+1)} \ d\mu_{di}(\bs) \nonumber \\
		&= \sum_{\ell = {0}}^{\kappa - 1} \qnu^{\ell(d-i+1)}  \mu_{di}(E_\ell) \leq \sum_{\ell = {0}}^{\kappa - 1} \qnu^{\ell(d-i+1)}  C_2 \qnu^{-\ell(d-i+1)} \leq C_2 \kappa. \nonumber
	\end{split}
}

\end{proof}
Now we obtain an analog of \cite[Proposition 3.1]{KKLM} in the global function fields, but only for wedge vectors of small dimensions. Let $\lam$ be the normalized Haar measure on the compact group $\SL_{m+n}(\cOnu)$.
\begin{prop}\label{OrtEst}
For $i\in \llbracket 1, m \rrbracket$, let $\beta_{i} = \frac{m}{i}$.
Then there exists a constant $C > 0$ depending only on $m,n$ such that 
\eq{
\int_{\SL_{m+n}(\cOnu)}\|g_{t} \bk \bv\|^{-\beta_i}d\lam(\bk) \leq Ct\qnu^{-mnt}\|\bv\|^{-\beta_i}
} for any $t \in \bNN$ and any decomposable $i$-vector $\bv \in\bigwedge^{i} \mathbf K_\nu^{m+n}$.
\end{prop}
\begin{proof}
Set
\eq{
I_{t}(\bv) = \int_{\SL_{m+n}(\cOnu)} \ \|g_{t} \bk \bv\|^{-\beta_i}d\lam(\bk)
} for each $t \in \bNN$ and decomposable $i$-vector $\bv\in\bigwedge^{i}\mathbf K_\nu^{m+n}$. 
By \cite[Corollary 2.11]{Bang} (see also \cite[Lemma 2.7]{HP} for the transitivity on Grassmanian), for any decomposable $i$-vectors $\bv, \bv'$ satisfying $\|\bv\| = \|\bv'\|$, there exist $c \in \mathbf K_\nu$ with $|c|=1$ and $\bg \in \SL_{m+n}(\cOnu)$ such that $\bv=c \bg \bv'$. Using the $\SL_{m+n}(\cOnu)$-invariance of $\lambda$ and the homogeneity of the norm, we have
\eq{
\begin{split}
I_{t}(\bv)
&=\int_{\SL_{m+n}(\cOnu)} \ \|g_{t} \bk \bv\|^{-\beta_i} d\lam(\bk) 
=\int_{\SL_{m+n}(\cOnu)} \ \|g_{t} \bk (c \bg \bv')\|^{-\beta_i} d\lam(\bk) \\
&=\int_{\SL_{m+n}(\cOnu)} \ \|g_{t} \bk \bg \bv'\|^{-\beta_i} d\lam(\bk)
=\int_{\SL_{m+n}(\cOnu)} \ \|g_{t} \bk \bv'\|^{-\beta_i} d\lam(\bk) \\
&=I_{t}(\bv')
\end{split}
} Thus, for any decomposable $i$-vectors $\bv, \bw \in \bigwedge^{i}\mathbf K_\nu^{m+n}$, we have
\eq{
\|\bv\|^{\beta_i} I_{t}(\bv) = I_{t}(\frac{\bv}{\|\bv\|}) = I_{t}(\frac{\bw}{\|\bw\|}) = \|\bw\|^{\beta_i} I_{t}(\bw).
}

Hence, we have
\eqlabel{Eq_It}{
I_{t}(\bv)=C(t)\| \bv \|^{-\beta_i}
} for some function $C(t)$.

Let $\bv=\bx_1 \wedge \dots \wedge \bx_i$, where $\bx_1, \dots, \bx_i$ are independent random variables on $\mathbf K_{\nu}^{m+n}$ whose distribution is $\overline{\mu_{m+n}}$. 
Since $\beta_i = \frac{m}{i} < m + n - i + 1$ for every $i \leq m$, we obtain $ C'_1 := \bE(\|\bx_1 \wedge \cdots\wedge \bx_i\|^{-\beta_i})<\infty$ by Lemma \ref{LemWedgeExp}. 
Taking the expectation on \eqref{Eq_It}, and since the distributions of $\bx_1, \dots, \bx_i$ are $\SL_{m+n}(\cOnu)$-invariant, we obtain
\eq{
C(t) = \frac{1}{C'_1}\bE (I_{t}(\bx_1 \wedge \cdots \wedge \bx_i)) = \frac{1}{C'_1}\bE(\|{g}_{t}(\bx_1 \wedge \cdots \wedge \bx_i) \|^{-\beta_i}).
} Hence, it suffices to show that there exists a constant $c$ depending only on $m$ and $n$ such that
\eqlabel{Eq_DiagWedgeEst}{
\bE(\|{g}_{t}(\bx_1 \wedge \cdots \wedge \bx_i) \|^{-\beta_i}) \leq ct\qnu^{-mnt}
} for all $t \in \bNN$.

Let $V_u$ and $V_s$ be the subspaces of $\mathbf K_\nu^{m+n}$ spanned by $\{\be_j\}_{1\leq j \leq m}$ and $\{\be_{m+j}\}_{1 \leq j \leq n}$, respectively. Let $p_{u}^{(1)} : \mathbf K_\nu^{m+n} \rightarrow {V_u}$ and $p_{s}^{(1)} : \mathbf K_\nu^{m+n} \rightarrow {V_s}$ be the projections corresponding to the direct sum $\mathbf K_{\nu}^{m+n} = V_u \bigoplus V_s$. Define the projection $p_{u}^{(i)} : \bigwedge^{i}  \mathbf K_\nu^{m+n} \rightarrow \bigwedge^{i}  {V_u} $ by
\eq{
	p_{u}^{(i)}(\be_I) = \bigwedge_{k=1}^{i} p_{u}^{(1)}(\be_{\iota_k})
} for each $I = \{ \iota_1, \dots, \iota_i \} \in \wp^{m+n}_{i}$ and extending it linearly. Define the projection $p_{s}^{(i)} : \bigwedge^{i}  {\mathbf K_\nu}^{m+n} \rightarrow \bigwedge^{i} {V_s}$ in a similar way.
Note that
\eqlabel{Eq_Proj_Decomp}{
	{p_{u}^{(i)}(\bv_{\llbracket 1, i \rrbracket}) = \bigwedge_{j=1}^{i} p_{u}^{(1)}(\bv_j)}
} for any $\bv_1, \dots, \bv_i \in \mathbf K_{\nu}^{m+n}$.

For $1 < i < m$, since $p_{u}^{(i)}$ is a projection, we have
\eqlabel{Eq_Proj_Wedge}{
	\| p_{u}^{(i)} g_t(\bigwedge_{j=1}^{i} \bx_j) \| = \| p_{u}^{(i)}( \bigwedge_{j=1}^{i} g_t \bx_j) \| \leq \| g_t (\bigwedge_{j=1}^{i} \bx_j) \|.
}
Note that $p_{u}^{(1)}(\bx_1), \dots$, $p_{u}^{(1)}(\bx_i)$ are independent random variables on $\mathbf K_\nu^m$ whose distribution is $\overline{\mu_m}$. Thus, applying Lemma \ref{LemWedgeExp} with $d = m$, we obtain
\eqlabel{Eq_Proj_Finite}{
	B_i := \bE(\|p_{u}^{(1)}(\bx_1) \wedge \cdots \wedge p_{u}^{(1)}(\bx_i)\|^{-\beta_i})<\infty
} since $\beta_i = \frac{m}{i} < m -i + 1$ for $1 < i < m$.
Hence, it follows from \eqref{Eq_Proj_Decomp}, \eqref{Eq_Proj_Wedge}, \eqref{Eq_Proj_Finite} and the equality $p_{u}^{(1)}(g_t \bx_j) = \pinu^{-nt}p_{u}^{(1)}( \bx_j)$ that
\eqlabel{Eq_Res1}{
\begin{split}    
 \bE(\| g_t(\bigwedge_{j=1}^{i} \bx_j )\|^{-\beta_i})
	&\leq \bE(\| \bigwedge_{j=1}^{i} p_{u}^{(1)}(g_t \bx_j) \|^{-\beta_i}) \\
	&= \bE(\qnu^{-ni \beta_i t}\| \bigwedge_{j=1}^{i} p_{u}^{(1)}(\bx_j) \|^{-\beta_i})  \leq \qnu^{-mnt} \max_{1 < i < m}B_i, 
\end{split}
}
which satisfies \eqref{Eq_DiagWedgeEst} since $t \geq 1$ and $\max_{1 < i < m}B_i$ depends only on $m$.

Now, assume $i=1$ or $i=m$. First, by \eqref{Eq_Proj_Wedge} and since $g_t$ dilates the norm of every vector at least by $\qnu^{-mt}$, observe that
\eq{
	\|g_t (\bigwedge_{j=1}^{i} \bx_j)\|
	\geq \max(
		\| g_t p_{u}^{(i)} (\bigwedge_{j=1}^{i} \bx_j) \|,
		\qnu^{-imt} \| \bigwedge_{j=1}^{i} \bx_j \|
	).
} Thus, for any $\kappa \in \bNN$, we can separate the left hand side of \eqref{Eq_DiagWedgeEst} into
\eqlabel{Eq_DiagWedgeSep}{
\bE(\|{g}_{t}(\bigwedge_{j=1}^{i} \bx_j ) \|^{-\beta_i}) \leq E_1 + E_2,
} where
\eq{
	\begin{split}
		E_1 &= \bE(\|{g}_{t}p_{u}^{(i)}(\bigwedge_{j=1}^{i} \bx_j ) \|^{-\beta_i} \mathds{1}(\|p_{u}^{(i)} (\bigwedge_{j=1}^{i} \bx_j ) \| > \qnu^{-\kappa t})); \\
		E_2 &= \qnu^{im{\beta}_{i}t} \bE(\|\bigwedge_{j=1}^{i} \bx_j  \|^{-\beta_i} \mathds{1}(\|p_{u}^{(i)} (\bigwedge_{j=1}^{i} \bx_j ) \| \leq \qnu^{-\kappa t})).
	\end{split}
}
By Lemma \ref{LemWedgeErrExp} for $d = m$ with random variables $p_{u}^{(1)}(\bx_1), \dots, p_{u}^{(1)}(\bx_i)$ and by \eqref{Eq_Proj_Decomp} we have
\eqlabel{Eq_ProjWedgeEst}{
	\bE(\|p_{u}^{(i)}(\bigwedge_{j=1}^{i} \bx_j ) \|^{-(m-i+1)} \mathds{1}(\|p_{u}^{(i)} (\bigwedge_{j=1}^{i} \bx_j ) \| > \qnu^{-\kappa t})) \leq C_2 \kappa t
}
Since $\beta_i = \frac{m}{i}= m-i+1$ and $g_t p_{u}^{(1)}(\bx_j) = \pinu^{-nt} p_{u}^{(1)}(\bx_j) $ for $j \in \llbracket 1, i \rrbracket$, it follows from \eqref{Eq_ProjWedgeEst} that 
\eqlabel{Eq_E_1_Res}{
\begin{split}    
 E_1 &= \bE(\| \bigwedge_{j=1}^{i} {g}_{t}p_{u}^{(1)}(\bx_j) \|^{-\beta_i} \mathds{1}(\| p_{u}^{(i)} (\bigwedge_{j=1}^{i} \bx_j ) \| > \qnu^{-\kappa t})) \\
	&= \qnu^{-ni \beta_i t} \bE(\| \bigwedge_{j=1}^{i} p_{u}^{(1)}(\bx_j)  \|^{-\beta_i} \mathds{1}(\|p_{u}^{(i)} (\bigwedge_{j=1}^{i} \bx_j) \| > \qnu^{-\kappa t})) \\ 
	&\leq \qnu^{-ni \beta_i t} (C_2 \kappa t) = C_2 \kappa t \qnu^{-mnt}. 
\end{split}
}
To estimate $E_2$, we use a direct calculation applying Theorem \ref{ThmWedgeUpperBound}, which is a different method from \cite[Proposition 3.1]{KKLM} using conditional expectation. First, assume $i=1$. We have
\eqlabel{Eq_E_2_bound1}{
\begin{split}    
 E_2 &= \qnu^{m^{2}t} \bE(\|\bx_1  \|^{-m} \mathds{1}(\|p_{u}^{(1)} (\bx_1 ) \| \leq \qnu^{-\kappa t})) \\
	&= \qnu^{m^{2}t} \int_{{\textbf K_\nu}^{m+n}}  \| \bs \|^{-m} \mathds{1}(\| p_{u}^{(1)}(\bs) \| \leq \qnu^{-\kappa t})  d\overline{\mu}_{m+n}(\bs) \\
    &= \qnu^{m^{2}t} \int_{\cOnu^{m+n}}  \| \bs \|^{-m} \mathds{1}(\| p_{u}^{(1)}(\bs) \| \leq \qnu^{-\kappa t})  d\mu_{m+n}(\bs) \\
	&= \qnu^{m^{2}t} \sum_{\ell=0}^{\infty} \int_{\cOnu^{m+n}}  (\qnu^{-\ell})^{-m} \mathds{1}(\| \bs \| = \qnu^{-\ell},  \| p_{u}^{(1)}(\bs) \| \leq \qnu^{-\kappa t})  d\mu_{m+n}(\bs) \\
	&= \qnu^{m^{2}t} \sum_{\ell=0}^{\infty} \qnu^{m\ell} \mu_{m+n}(\{\bs \in \cOnu^{m+n} : \| \bs \| = \qnu^{-\ell},  \|p_{u}^{(1)}(\bs)\| \leq \qnu^{-\kappa t}\}). 
\end{split}
}
Let $B_{\ell, \kappa t} = \mu_{m+n}(\{\bs \in \cOnu^{m+n} : \| \bs \| = \qnu^{-\ell},  \|p_{u}^{(1)}(\bs)\| \leq \qnu^{-\kappa t}\})$.
 If $\ell \geq \kappa t$, since $\qnu^{-\ell} \leq \qnu^{- \kappa t}$ and $\|p_{u}^{(1)}(\bs)\| \leq \|\bs\|$, we have
\eqlabel{Eq_E_2_bound2}{
	B_{\ell, \kappa t} \leq \mu_{m+n}(\{\bs \in \cOnu^{m+n} : \|\bs\| \leq \qnu^{-\ell} \}) = \qnu^{-(m+n)\ell}.
}
If $\ell < \kappa t$, since $\|\bs\| = \max(\|p_{u}^{(1)}(\bs)\|, \|p_{s}^{(1)}(\bs)\|)$ for any $s \in \cOnu^{m+n}$, we have
\eqlabel{Eq_E_2_bound3}{
\begin{split}
	B_{\ell, \kappa t} &= \mu_{m+n}(\{\bs : \| p_{s}^{(1)}(\bs)\| = q^{-\ell},  \| p_{u}^{(1)}(\bs) \| \leq q^{-\kappa t}\}) 
 \leq q^{-m\kappa t -n\ell}.
\end{split}
}
Therefore, it follows from \eqref{Eq_E_2_bound1}, \eqref{Eq_E_2_bound2} and \eqref{Eq_E_2_bound3} that
\eqlabel{Eq_E_2_bound4}{
\begin{split}
	E_2 &\leq \qnu^{m^{2}t} \Bigl( \sum_{\ell=0}^{\kappa t - 1}\qnu^{m\ell} (\qnu^{-n\ell-m\kappa t}) + \sum_{\ell=\kappa t}^{\infty}\qnu^{m\ell}(\qnu^{-(m+n)\ell}) \Bigr) \\
	&= \qnu^{m^{2}t} \Bigl( \qnu^{-m\kappa t} \sum_{\ell=0}^{\kappa t - 1}\qnu^{(m-n)\ell}  + \sum_{\ell=\kappa t}^{\infty}\qnu^{-n\ell} \Bigr) \\
	&= \qnu^{m^{2}t} \Bigl( \qnu^{-m\kappa t} \sum_{\ell=0}^{\kappa t - 1}\qnu^{(m-n)\ell}  + \frac{\qnu^{-n \kappa t}}{1 - \qnu^{-n}} \Bigr).
\end{split}
}
Note that 
\eq{
 \sum_{\ell=0}^{\kappa t - 1}\qnu^{(m-n)\ell}
 \leq \begin{cases}
 	\frac{\qnu^{(m-n)\kappa t} - 1}{\qnu^{(m-n)} - 1} \leq \frac{\qnu^{(m-n)\kappa t}}{1 - \qnu^{-n}}     & \quad \text{if } m > n \\
 	\kappa t \ \ \ \ \ \ \ \ \ \ \leq \frac{\kappa t}{1 - \qnu^{-n}}     & \quad \text{if } m \leq n.
 \end{cases}
} Hence, it follows from \eqref{Eq_E_2_bound4} that
\eq{
	E_2 \leq
\begin{cases}
	(1 - \qnu^{-n})^{-1}\qnu^{m^2 t}(2\qnu^{-n \kappa t})   & \quad \text{if } m > n \\
	(1 - \qnu^{-n})^{-1}\qnu^{m^2 t}(\kappa t \qnu^{-m \kappa t} + \qnu^{-n \kappa t})    & \quad \text{if } m \leq n,
\end{cases}
} which implies 
\eqlabel{Eq_E_2_Res1}{
	E_2 \leq 2(1 - \qnu^{-n})^{-1} \kappa t \qnu^{-mnt}
} for any $m, n \in \bNN$ and $\kappa$ $>$ $\max(\frac{m(m+n)}{m}, \frac{m(m+n)}{n})$.

Now assume $i = m$. Define
\eq{
	Z = \{\bs \in M_{m+n, m} (\cOnu) : \| p_{u}^{(m)}( \bigwedge_{j = 1}^{m}
	\bs_j ) \| \leq \qnu^{-\kappa t} \}.
} Consider a partition $\{R_\ell\}_{\ell \geq 0}$ of $M_{m+n, m}(\cOnu)$ defined by
\eq{
	R_\ell = \{\bs \in M_{m+n, m} (\cOnu) : \|  \bigwedge_{j = 1}^{m}
	\bs_j  \| = \qnu^{-\ell} \}.
}
Then, we have
\eqlabel{Eq_E_2_Decomp1}{
     E_2 = \qnu^{m^2 t} \int_{Z}  \|  \bigwedge_{j=1}^{m} \bs_j  \|^{-1} d\mu_{(m+n)m}(\bs)  = \qnu^{m^2 t} \sum_{\ell=0}^{\infty} \qnu^{\ell} \mu_{(m+n)m}(R_\ell \cap Z). 
}
Note that $R_\ell \subset E_\ell (m+n, m)$ and $R_\ell \cap Z \subset F_{\ell, \kappa t} (m+n, m)$ which are defined in Theorem \ref{ThmWedgeUpperBound}; indeed, the latter inclusion follows from \eqref{Eq_Proj_Decomp}, since
$p_{u}^{(m)}( \bigwedge_{j = 1}^{m}	\bs_j )
= \bigwedge_{j = 1}^{m}p_{u}^{(1)}(\bs_j)
= c_{\llbracket 1, m\rrbracket}(\bigwedge_{j = 1}^{m}	\bs_j)\be_{\llbracket 1, m\rrbracket}
$.
 If $\ell \geq \kappa t$, we have $R_\ell \subseteq Z$ since $\|p_u^{(m)}(\bv)\| \leq \|\bv\|$ for any $\bv \in \bigwedge^{m}\mathbf K_\nu^{m+n}$. Hence, by Theorem \ref{ThmWedgeUpperBound}, there exists a constant $C'_2 = C'_2(m,n)> 0$ such that
\eqlabel{Eq_E_2_bound5}{
	\mu_{(m+n)m}(R_\ell \cap Z) \leq \mu_{(m+n)m}(E_\ell (m+n, m)) \leq C'_2 \qnu^{-\ell (n + 1) }.
}
If $\ell < \kappa t$, by Theorem \ref{ThmWedgeUpperBound} again, there exists a constant $C_1 = C_1(m,n) > 0$ such that
\eqlabel{Eq_E_2_bound6}{
\begin{split}    
 \mu_{(m+n)m}(R_\ell \cap Z) &\leq \mu_{(m+n)m}(F_{\ell, \kappa t} (m+n, m)) \\
	&\leq C_1 \qnu^{- \kappa t -n\ell} \binom{\kappa t -\ell + \binom{m+n}{m}}{\kappa t -\ell + 1}. 
\end{split}
}
Note that given $a \in \bNN$, there exists a constant $C(a) > 0$ depending only on $a$ such that
\eq{
\binom{x+a}{x+1} \leq \frac{(x+a)^{a-1}}{(a-1)!} \leq C(a) \qnu^{\frac{1}{2}x }
} for any $x \in \bNN$ since the exponential growth is faster than the polynomial one. Hence, there exists a constant $C_3 > 0$ depending only on $m$ and $n$ such that
\eq{
	\binom{\kappa t - \ell + \binom{m+n}{m}}{\kappa t - \ell + 1} \leq C_3 \qnu^{\frac{1}{2} (\kappa t - \ell)}
} for any $0 \leq \ell \leq \kappa t - 1$. 

Therefore, it follows from \eqref{Eq_E_2_Decomp1}, \eqref{Eq_E_2_bound5} and \eqref{Eq_E_2_bound6} that
\eqlabel{Eq_E_2_bound7}{
	\begin{split}
	E_2 &\leq \qnu^{m^2 t} (\sum_{\ell=0}^{\kappa t - 1} \qnu^{\ell} (C_1 \qnu^{- \kappa t -n\ell} (C_3 \qnu^{\frac{1}{2} (\kappa t - \ell)})) + \sum_{\ell=\kappa t}^{\infty} \qnu^{\ell} (C'_2 \qnu^{-\ell (n + 1) })) \\
	&= \qnu^{m^2 t} (C_1 C_3 \qnu^{-\frac{1}{2} \kappa t} \sum_{\ell=0}^{\kappa t - 1} \qnu^{- (n + \frac{1}{2})\ell} + C'_2 \sum_{\ell=\kappa t}^{\infty} \qnu^{-n\ell}) \nonumber \\
	&= \qnu^{m^2 t} (C_1 C_3 \qnu^{-\frac{1}{2} \kappa t} \frac{1 - \qnu^{-(n + \frac{1}{2})\kappa t }}{1 - \qnu^{-(n + \frac{1}{2})}} + C'_2 \frac{\qnu^{-n \kappa t}}{1 - \qnu^{-n}}) \nonumber \\
	&\leq \qnu^{m^2 t} (C_1 C_3 \qnu^{-\frac{1}{2} \kappa t} \frac{1}{1 - \qnu^{-(n + \frac{1}{2})}} + C'_2 \frac{\qnu^{-n \kappa t}}{1 - \qnu^{-n}}) \ \ \ \ \ \ \ \ \ \ \ \ \ \ \ \ \ \ \ \ \ \ \nonumber \\
	&\leq C_4 \qnu^{m^2 t}(\qnu^{-\frac{1}{2} \kappa t} + \qnu^{-n \kappa t}  ), \nonumber
\end{split}
} where $C_4 = \max(C_1 C_3 (1 - \qnu^{-(n + \frac{1}{2})})^{-1}, C'_2 (1 - \qnu^{-n})^{-1})$. Hence, we have
\eqlabel{Eq_E_2_Res2}{
	E_2 \leq C_4 \qnu^{-mnt}
} for any $\kappa \geq 2(m^2 + mn)$.

As a summary, it follows from \eqref{Eq_DiagWedgeSep}, \eqref{Eq_E_1_Res}, \eqref{Eq_E_2_Res1} and \eqref{Eq_E_2_Res2} that when $i = 1$ or $i = m$, we have
\eq{
\bE(\|{g}_{t}(\bx_1 \wedge \cdots \wedge \bx_i) \|^{-\beta_i}) \leq (C_1 \kappa t  + \max(2(1-\qnu^{-n})^{-1} \kappa t ,  C_4))\qnu^{-mnt}
} for any $t \in \bNN$ and any $\kappa > 2(m^2 + mn)$. Thus, by fixing some $\kappa > 2(m^2 + mn)$, we obtain
\eqlabel{Eq_Res2}{
\bE(\|{g}_{t}(\bx_1 \wedge \cdots \wedge \bx_i) \|^{-\beta_i}) \leq C t \qnu^{-mnt}
} for all $t \in \bNN$, where $C$ is a constant depending on $m$ and $n$.

Therefore, from \eqref{Eq_Res1} and \eqref{Eq_Res2}, we reach the goal \eqref{Eq_DiagWedgeEst}.
\end{proof}

\section{the Hodge duality and the proof of Proposition \ref{LinAlgEst}}\label{Sec4}

We let $d = m+n$ for this section and the remaining sections. While \cite[Proposition 3.1]{KKLM} covers all $1 \leq i \leq d$ using the equivariance of the Hodge duality under $\mathrm{SO}_{d}(\bR)$, the Hodge dual operator is not equivariant under the ultrametric orthogonal group $\SL_{d}(\cOnu)$. Instead, we apply the duality to horospherical subgroups.

Note that the exterior powers and their natural basis can be defined for any free $R$-module with scalar in a commutative ring $R$ with unity. Thus, we can define the standard Hodge dual operator between $\bigwedge^{i} \mathbf K_\nu^{d}$ and $\bigwedge^{d-i} \mathbf K_\nu^{d}$
 in the same way as a linear map, even though there is no inner product structure.
\begin{defi}
 Let $\{\be_j\}_{j \in \llbracket 1, d\rrbracket}$ be the standard basis of $\mathbf K_{\nu}^{d}$. 	For each $i \in \llbracket 1, d \rrbracket$, we define \textit{the Hodge dual operator} $* : \bigwedge^{i}{ \mathbf K_{\nu}^{d}} \rightarrow \bigwedge^{d-i}{\mathbf K_{\nu}^{d}}$ linearly by
\eq{
*(\be_I) =\sigma_{I} \be_{ I^c },
} where $I = \{a_1 < \cdots < a_{i} \} \in \wp_i^d$, $I^c = \llbracket 1, d \rrbracket \smallsetminus I$, and $\sigma_{I} = (-1)^{\frac{i(i+1)}{2} + \sum_{s=1}^{i}{a_s}}$. \end{defi}
Note that the above definition is the same one as in the real case and the term $\sigma_{I}$ comes from $\be_I \wedge \be_{I^c} = \sigma_{I} \be_{\llbracket 1, d \rrbracket}$. The following lemma says that some properties related to exterior powers still hold for the global function fields. 
This property of the Hodge dual operator is well-known, but we cannot find a suitable reference for non-Archimedean spaces.
\begin{lem}\label{lem_dual}
For any $i \in \llbracket 1, d-1 \rrbracket$, $\bv \in \bigwedge^{i}{{\mathbf K_\nu}^{d}}$, and $\bg \in \SL_d(\mathbf K_\nu)$, we have 
\begin{enumerate}
    \item\label{lem_dual_1} $\|*\bv\| = \|\bv\|$;
    \item\label{lem_dual_2} $*(\bg \bv) =  {}^{t}(\bg^{-1}\!) (*\bv)$.
\end{enumerate}
\end{lem}	

\begin{proof}
\eqref{lem_dual_1} Let $\bv = \sum_{I} v_I \be_I\in \bigwedge^{i} \mathbf K_\nu^{d}$ be an $i$-vector. Since
\eq{
*\bv = *(\sum_{I} v_I \be_I ) = \sum_{I} v_I \! * \! (\be_I)  = \sum_{I} v_I \sigma_I \be_{I^{c}}
} and $|\sigma_I | = 1$ for all $I \in \wp_{i}^d$, it follows that
\eq{
	\|*\bv\| = \|\sum_{I} v_I \sigma_I \be_{I^{c}}\| = \max_{I}|v_I \sigma_I| = \max_{I}|v_I| = \|\bv\|.
}
\eqref{lem_dual_2} It is enough to show that for any $\bg \in \SL_{d}(\mathbf K_\nu)$ and $I\in\wp_i^d$, we have
\eqlabel{Eq_Dual_Matrix}{
*(\bg \be_I) = {}^{t}(\bg^{-1}\!) (*\be_I).
}
First, observe that
\eqlabel{Eq_Wedge_Matrix}{
\bg \be_I = \sum_{J\in \wp_i^d} (\det \bg_{J, I}) \be_{J},
} where $\det \bg_{I, J}$ is the $(I,J)$-minor of $\bg$.
Hence, its Hodge dual is
\eq{
	*(\bg \be_I) = *(\sum_{J} (\det \bg_{J, I}) \be_{J}) = \sum_{J} (\det \bg_{J, I}) *(\be_{J}) = \sum_{J} \sigma_J (\det \bg_{J, I}) \be_{J^{c}}.
}
On the other hand, using \eqref{Eq_Wedge_Matrix} with $ {}^{t}(\bg^{-1}\!) $, we have
\begin{align*}
	{}^{t}(\bg^{-1}\!) (*\be_I) &= \sigma_I {}^{t}(\bg^{-1}\!) ( \be_{I^c}) = \sum_{J \in \wp_{d-i}^d} \sigma_I \det({}^{t}(\bg^{-1}\!))_{J, I^{c}} \be_{J} \\
	&= \sum_{J \in \wp_{i}^d} \sigma_I \det({}^{t}(\bg^{-1}\!))_{J^{c}, I^{c}} \be_{J^{c}}. 
\end{align*}
Therefore, to prove \eqref{Eq_Dual_Matrix}, it suffices to show that
 \eqlabel{Eq_Dual_Main}{
	\det \bg_{J, I}  =  \sigma_I \sigma_J \det ({}^{t}(\bg^{-1}\!))_{J^{c}, I^{c}}
} for any $I, J \in \wp_{i}^d$. Since $\det \bg =1$, \eqref{Eq_Dual_Main} is Jacobi's identity, which holds over any commutative ring (see for instance \cite[Lemma A.1]{CSS}).
\end{proof}

\begin{remark} 
The Hodge dual operator preserves the decomposability. Indeed, if $\bw = \bv_1 \wedge \dots \wedge \bv_i$ is a nonzero decomposable vector, 
then $\bv_1, \dots, \bv_i$ are linearly independent over $\mathbf K_\nu$, and thus it can be extended to a basis $\{\bv_1, \dots, \bv_{d}\}$ of $\mathbf K_\nu^{d}$. Let $\bv$ be the $(d,d)$-matrix with columns $\bv_1, \dots, \bv_d$. Then $\bw = \bv\be_{\llbracket 1, i \rrbracket}$ and $\bv$ is invertible. Hence, by Lemma \ref{lem_dual}, we have
\eq{
	\begin{split}
	*\bw &= {}^{t}(\bv^{-1}\!) \sigma_{ \llbracket 1, i \rrbracket } \be_{\llbracket i+1, d \rrbracket} = (\sigma_{\llbracket 1, i \rrbracket} {}^{t}(\bv^{-1}\!) \be_{i+1}) \wedge \cdots \wedge ({}^{t}(\bv^{-1}\!) \be_{d}),
	\end{split}
} which implies that $*\bw$ is decomposable.
\end{remark} 

Now, we are ready to prove Proposition \ref{LinAlgEst} using the Hodge dual operator.
\begin{proof}[Proof of Proposition \ref{LinAlgEst}]
First, let $i \in \llbracket 1, m \rrbracket$ in order to apply Proposition \ref{OrtEst}.
Define $P( \cOnu)$ and $U( \cOnu)$ by
\eq{
\begin{split}
	&P( \cOnu) = \left\{\left(\begin{matrix} \ba & 0 \\ \bb & \bc \end{matrix} \right) :  \ba \in M_{m,m}(\cOnu), \bb \in M_{n,m}(\cOnu), \bc \in M_{n,n}(\cOnu), \det \ba \det \bc =1\right\}; \\
	&U( \cOnu) = \left\{\left(\begin{matrix} \mathbf{I}_m & \bs \\ 0 & \mathbf{I}_n \end{matrix} \right) : \bs \in M_{m,n}(\cOnu) \right\}.
\end{split}
} 
Using the fact that $\cOnu$ is a compact subring of $\mathbf K_\nu$, one can easily show that $P( \cOnu)$ and $U( \cOnu)$ are compact subgroups of $\SL_d(\cOnu)$. Moreover, there exists an open neighborhood of $\mathbf{I}_d$ in $\SL_d(\cOnu)$ contained in $P( \cOnu)U( \cOnu)$.
For instance, if we take
\eq{
V = (\mathbf{I}_{d} + M_{d,d}(\pinu \cOnu)) \cap \SL_d(\cOnu).
} as an open neighborhood of $\mathbf{I}_{d}$ in $\SL_d(\cOnu)$, then $V$ is contained in $P(\cOnu)U(\cOnu)$. 
Indeed, let $\bx \in V$ and write $\bx = \left(\begin{matrix} \mathbf{I}_m + \ba & \bb \\ \bc & \mathbf{I}_n + \bd \end{matrix} \right)$. 
Applying the Leibniz formula for the determinant to $\mathbf{I}_m + \ba$ and $\mathbf{I}_n + \bd$, we have 
\eq{
	\det(\mathbf{I}_m + \ba) - 1, \det(\mathbf{I}_n + \bd) - 1 \in \pinu \cOnu,
} which implies that $|\text{det}(\mathbf{I}_m + \ba)| = |\text{det}(\mathbf{I}_n + \bd)| = 1$, and thus, $\mathbf{I}_m + \ba \in \GL_m(\cOnu)$ and $\mathbf{I}_n + \bd \in \GL_n(\cOnu)$.  Hence, we have the following decomposition
\eq{\begin{split}
	\left(\begin{matrix} \mathbf{I}_m + \ba & \bb \\ \bc & \mathbf{I}_n + \bd \end{matrix} \right)
	&= \left(\begin{matrix} \mathbf{I}_m + \ba & 0 \\ \bc & \mathbf{I}_n + \bd - \bc(\mathbf{I}_m + \ba)^{-1} \bb \end{matrix} \right)
	  \left(\begin{matrix} \mathbf{I}_m & (\mathbf{I}_m + \ba)^{-1} \bb \\ 0 & \mathbf{I}_n \end{matrix} \right)\\[0.5ex]
   &\in P(\cOnu)U(\cOnu).
   \end{split}} 

Since $\SL_d(\cOnu)$, $P(\cOnu)$ and $U(\cOnu)$ are compact groups, they are unimodular. 
Let $\lam_P$ and $\lam_U$ be the normalized Haar measure of $P(\cOnu)$ and $U(\cOnu)$, respectively. 
Since $\mathbf{I}_{d} \in V \subset P(\cOnu)U(\cOnu)$ and $P(\cOnu) \cap U(\cOnu) = \{\mathbf{I}_{d}\}$, 
we have by \cite[Lemma 11.31]{EW} that 
\eqlabel{Eq_Msr_Decomp}{
	\lam |_{P(\cOnu)U(\cOnu)} = \lam (P(\cOnu)U(\cOnu)) \phi_{*}(\lam_P \times \lam_U),
} where $\phi : P(\cOnu) \times U(\cOnu) \rightarrow \SL_d(\cOnu)$ is defined by $\phi(\bp, \bu) = \bp \bu$. Note that $C_\lam := \lam (P(\cOnu)U(\cOnu))$ is a constant depending only on $d$.

Denote by $\|\mathbf{x}\|_{op}$ the operator norm for $\mathbf{x} \in M_{d,d}(\mathbf K_\nu)$ induced by the ultrametric norm in $\mathbf K_\nu^{d}$. 
Note that $\|g_{t} \bp g_{-t}\|_{op} \leq \|\bp\|_{op} \leq 1$ for any $\bp = \left(\begin{smallmatrix} \ba & 0 \\ \bb & \bc \end{smallmatrix} \right) \in P(\cOnu)$ since the following equality holds:
\eq{
\left(\begin{matrix} \pi_\nu^{-nt} \mathbf{I}_m & \\ & \pi_\nu^{mt} \mathbf{I}_n \end{matrix} \right)
\left(\begin{matrix} \ba & 0 \\ \bb & \bc \end{matrix} \right)
\left(\begin{matrix} \pi_\nu^{-nt} \mathbf{I}_m & \\ & \pi_\nu^{mt} \mathbf{I}_n \end{matrix} \right)^{-1}
= \left(\begin{matrix} \ba & 0 \\ \pi_\nu^{(m+n)t} \bb & \bc \end{matrix} \right).
}

Hence, it follows from \eqref{Eq_Msr_Decomp} and Fubini's theorem that for any $t \geq 1$ and decomposable $\bv \in \bigwedge^{i} \mathbf K_\nu^{m+n}$, we have
\eqlabel{Eq_MaxCpt_Horosph1}{
\begin{split}
	&\int_{\SL_d(\cOnu)}\|g_{t} \bk \bv\|^{-\beta_i}d\lam(\bk)	\geq \int_{P(\cOnu)U(\cOnu)}\|g_{t}\bk \bv\|^{-\beta_i}d\lam(\bk) \\
	&= C_\lam \int_{P(\cOnu) \times U(\cOnu)}\|g_{t} \bp \bu \bv\|^{-\beta_i}d(\lam_P \times \lam_U) (\bp, \bu)  \\
	&= C_\lam \int_{U(\cOnu)} \int_{P(\cOnu)}\|(g_{t} \bp g_{-t})g_{t} \bu \bv\|^{-\beta_i}d\lam_P (\bp)d\lam_U(\bu)  \\
	&\geq C_\lam \int_{U(\cOnu)} \int_{P(\cOnu)}\|g_{t} \bp g_{-t}\|_{op}^{-\beta_i} \|g_{t} \bu \bv\|^{-\beta_i}d\lam_P (\bp)d\lam_U(\bu)  \\
	&= C_\lam \int_{P(\cOnu)} \|g_{t} \bp g_{-t}\|_{op}^{-\beta_i}d\lam_P (\bp) \int_{U(\cOnu)} \|g_{t}\bu \bv\|^{-\beta_i}d\lam_U(\bu). \\
  &\geq C_\lam \int_{U(\cOnu)} \|g_{t}\bu \bv\|^{-\beta_i}d\lam_U(\bu).
\end{split}
}
Using the identification between $M_{m,n}(\cOnu)$ and $U(\cOnu)$, $\lambda_U$ equals $\overline{\mu_{mn}}$. It follows from \eqref{Eq_MaxCpt_Horosph1} and Proposition \ref{OrtEst} that
\eqlabel{Eq_MaxCpt_Horosph3}{
\begin{split}
	\int_{M_{m,n}(\cOnu)} \|g_{t} u_\bs \bv\|^{-\beta_i}d\mu_{mn}(\bs)
	&\leq \frac{1}{C_\lam} \int_{\SL_d(\cOnu)}\|g_{t}\bk\bv\|^{-\beta_i}d\lam(\bk) \\
    &\leq \frac{C}{C_\lam} t\qnu^{-mnt}\|\bv\|^{-\beta_i}.
\end{split}
}

Now, consider the case when $i \in \llbracket m+1, d-1\rrbracket$. 
Note that we can assume that $n \geq 2$ since we can apply \eqref{Eq_MaxCpt_Horosph3} to all $1 \leq i \leq m = d-1$ if $n=1$. Denote
\eq{
	\wt{g}_t = \left(\begin{matrix} \pinu^{-mt} \mathbf{I}_n & 0 \\ 0 & \pinu^{nt}\mathbf{I}_m \end{matrix} \right); \wt{u}_\bs = \left(\begin{matrix} \mathbf{I}_n & \bs \\ 0 & \mathbf{I}_m \end{matrix} \right); l_\bs = \left(\begin{matrix} \mathbf{I}_m & 0 \\ \bs & \mathbf{I}_n \end{matrix} \right); E = \left(\begin{matrix} 0 & \mathbf{I}_n \\ \mathbf{I}_m & 0 \end{matrix} \right)
} for $\bs \in M_{n,m}(\cOnu)$.
Observe that for $\bs \in M_{n,m}(\cOnu)$ and  $\bw \in \bigwedge 
 \mathbf K_{\nu}^{d}$, we have
\eqlabel{Eq_Codim}{
E g_{-t} l_{\bs} \bw = \wt{g}_t \wt{u}_{\bs} E \bw.
}
Thus, for each $i \in \llbracket m+1, d-1 \rrbracket$, we have
 \begin{alignat*}{2} 
	& \ \ \ \ \int_{M_{m,n}(\cOnu)} \|g_{t} u_\bs \bv\|^{-\beta_i}d\mu_{mn}(\bs)   \quad  \quad && \\
	& =\int_{M_{m,n}(\cOnu)} \|*(g_{t} u_\bs \bv)\|^{-\beta_i}d\mu_{mn}(\bs)   \quad \quad \quad \quad \quad \quad && \text{by Lemma \ref{lem_dual} \eqref{lem_dual_1} }  \\
	& =\int_{M_{m,n}(\cOnu)} \|g_{-t} l_{(-{}^{t}\bs\!)} (*\bv)\|^{-\beta_i}d\mu_{mn}(\bs)   \quad \quad && \text{by Lemma \ref{lem_dual} \eqref{lem_dual_2} }   \\
	& =\int_{M_{m,n}(\cOnu)} \|Eg_{-t} l_{(-{}^{t}\bs\!)} (*\bv)\|^{-\beta_i}d\mu_{mn}(\bs)   \quad  \quad && \text{by \cite[Lemma 2.5]{Bang}} \\
	& =\int_{M_{m,n}(\cOnu)} \|\wt{g}_t  \wt{u}_{(-{}^{t}\bs\!)} E(*\bv)\|^{-\beta_i}d\mu_{mn}(\bs)   \quad \quad && \text{by (\ref{Eq_Codim})} \\
	& =\int_{M_{n,m}(\cOnu)} \|\wt{g}_t  \wt{u}_\bs E(*\bv)\|^{-\beta_i}d\mu_{nm}(\bs)   \quad \quad &&  \\
	& \leq \wt{C} t\qnu^{-mnt}\|E(*\bv) \|^{-\beta_i} = \wt{C} t\qnu^{-mnt}\| \bv \|^{-\beta_i}, \quad  \quad && 
 \end{alignat*} where the last inequality follows from \eqref{Eq_MaxCpt_Horosph3} by exchanging the roles of $m$ and $n$, by replacing $i$ by $d-i \in \llbracket 1, n\rrbracket$, $\bv$ by $E(*\bv) \in \bigwedge^{d-i} \mathbf K_\nu^{d}$, and $\beta_i$ by $\frac{n}{d-i}$.
\end{proof}

\section{Construction of Margulis functions}\label{sec5}
In this section, we construct Margulis functions satisfying a contraction hypothesis with an optimal contraction rate, which 
implies the upper bound of the Hausdorff dimension of the set of singular systems of linear forms over global function fields. 
We follow the construction in \cite[Section 3]{KKLM}.

Recall that $X=G/\Ga$ is the $G$-orbit of $R_\nu^d$ in the space of unimodular lattices in $\mb{K}_\nu^d$ as in Subsection \ref{subsec2.2}. For a lattice $x\in X$, we say that a subspace $L$ of $\mb{K}_\nu^d$ is \textit{$x$-rational} if $L\cap x$ is a lattice in $L$.
For any $i \in \llbracket 1, d \rrbracket$ and $x \in X$, let us denote by $F_i(x)$ the set of all $i$-dimensional $x$-rational subspaces in $\mb{K}_{\nu}^{d}$.
For any $L \in F_i(x)$, let $\|L\| = \| \bv_1 \wedge \cdots \wedge \bv_i\|$ for an $R_\nu$-basis $\{\bv_1,\dots,\bv_i\}$ of $L\cap x$, which is independent of the choice of basis. 
Define 
\eq{
\alpha_i (x) = \max\left\{\frac{1}{\|L\|} : L\in F_i(x)\right\}.
}
Note that $\al_d(x) = 1$, thus we set $\al_0(x)=1$. 
We remark that the alpha function was constructed in \cite{KKLM} using the covolume of sublattices, which coincides with the norm of the wedge product of a basis in real cases. While the covolume of $L \cap x$ in $L$ is different from $\|L\|$ in global function fields (see, e.g., \eqref{Eq_cov_R}), the norm $\|L\|$ can be seen as the normalized covolume (see \cite{HP} or \cite{Bang}). 

The following corollary is a function field version of \cite[Corollary 3.6]{KKLM}. We assume that $R_\nu$ is a principal ideal domain only since we need to use the submodularity of the covolume function established in \cite{Bang} under this assumption. Therefore, in this section and the next section,\\
\textbf{Assumption:}
\[
R_\nu \text{ is a principal ideal domain}.
\]

\begin{cor}\label{CorAlEst}
Let $i,\beta_i$ and $c>0$ be as in Proposition \ref{LinAlgEst}. 
For any $t \in \bNN$, $x\in X$ and $i\in \llbracket 1, d-1 \rrbracket$, we have
\[\begin{split}
\int_{M_{m,n}(\cO_\nu)} \al_i (g_t u_\bs x)^{\beta_i} d\mu_{mn}(\bs) &\leq c t q_\nu^{-mnt}\al_i(x)^{\beta_i}\\
&+ q_\nu^{2mnt\beta_i} \max_{1\leq j \leq \min\{d-i,i\}} \left(\sqrt{\al_{i+j}(x)\al_{i-j}(x)}\right)^{\beta_i}.
\end{split}\]
\end{cor}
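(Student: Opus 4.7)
The plan is to follow the strategy of \cite[Corollary 3.6]{KKLM}, using Proposition \ref{LinAlgEst} at the level of individual decomposable vectors together with a dichotomy for rational subspaces that is powered by the submodularity of the normalized covolume function, $\|L_1+L_2\|\,\|L_1\cap L_2\|\leq \|L_1\|\,\|L_2\|$, established in \cite{Bang} under the PID hypothesis on $R_\nu$. Set $\phi_i(y):=\max_{1\leq j\leq \min(i,d-i)}\sqrt{\al_{i+j}(y)\al_{i-j}(y)}$. The first step I would carry out is a dichotomy: for every $\om\geq 1$ and $y\in X$, at most one $L\in F_i(y)$ satisfies $\|L\|^{-1}>\om\phi_i(y)$, and such an $L^\ast$, if it exists, realizes $\al_i(y)$. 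Indeed, if $L_1\neq L_2$ were two such subspaces, then with $j:=i-\dim(L_1\cap L_2)\in[1,\min(i,d-i)]$, submodularity together with $\phi_i(y)^2\geq \al_{i+j}(y)\al_{i-j}(y)$ would give
\[
\al_{i+j}(y)^{-1}\al_{i-j}(y)^{-1}\leq \|L_1+L_2\|\|L_1\cap L_2\|\leq \|L_1\|\|L_2\|<(\om\phi_i(y))^{-2}\leq \om^{-2}\al_{i+j}(y)^{-1}\al_{i-j}(y)^{-1},
\]
contradicting $\om\geq 1$.

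Second, I would set $\om:=q_\nu^{mnt}$ and transfer norms between $x$ and $y:=g_tu_\bs x$. Under the canonical bijection $F_i(x)\leftrightarrow F_i(y)$ induced by the flow, $\|g_tu_\bs L\|_y=\|g_tu_\bs v_L\|$, where $v_L$ is a wedge representative of $L\cap x$. Because $u_\bs\in\SL_d(\cO_\nu)$ preserves every wedge norm and the eigenvalue magnitudes of $g_t$ on $\bigwedge^k\mathbf K_\nu^d$ are $q_\nu^{nta-mtb}$ with $a+b=k$, $a\leq m$, $b\leq n$, the operator norm of $g_t$ on $\bigwedge^k\mathbf K_\nu^d$ is bounded by $q_\nu^{mnt}=\om$ for every $k$ (the maximum is attained at $k=m$). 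Consequently $\al_k(y)\leq \om\,\al_k(x)$ for every $k$, and therefore $\phi_i(y)\leq \om\,\phi_i(x)$, which upgrades the dichotomy-threshold $(\om\phi_i(y))^{\beta_i}$ into the stated $\om^{2\beta_i}\phi_i(x)^{\beta_i}$ factor.

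Third, the dichotomy yields the pointwise bound
\[
\al_i(y)^{\beta_i}\leq \bigl(\om\phi_i(y)\bigr)^{\beta_i}+\sum_{L\in F_i(x)}\|g_tu_\bs v_L\|^{-\beta_i}\,\mathbf{1}\bigl[\|L\|_y^{-1}>\om\phi_i(y)\bigr],
\]
where on the right-hand side the indicators have pairwise disjoint $\bs$-supports (by uniqueness). Integration of the first term yields at most $\om^{2\beta_i}\phi_i(x)^{\beta_i}$ by the previous step, giving the second summand of the corollary. For each $L$ in the sum, Proposition \ref{LinAlgEst} provides $\int\|g_tu_\bs v_L\|^{-\beta_i}\,d\mu_{mn}(\bs)\leq ctq_\nu^{-mnt}\|v_L\|^{-\beta_i}\leq ctq_\nu^{-mnt}\al_i(x)^{\beta_i}$, the last inequality because $\|v_L\|^{-1}\leq\al_i(x)$ for every $L\in F_i(x)$.

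The main technical obstacle I expect is controlling the sum over the a priori infinite family $F_i(x)$: a term-by-term application of Proposition \ref{LinAlgEst} would diverge. The resolution leverages the dichotomy twice. First, the disjoint-support property ensures that for each $\bs$ at most one $L$ contributes, so the sum collapses pointwise to a single term; second, the indicator constrains that single $L$ to lie in the small family of rational subspaces whose wedge-norm on $x$ is close to attaining $\al_i(x)$ (namely, $\|v_L\|^{-1}>\om^{-1}\phi_i(x)$ via the operator-norm bound on $g_t$), which is a finite set on which Proposition \ref{LinAlgEst} combined with $\|v_L\|^{-1}\leq\al_i(x)$ delivers the first summand $ctq_\nu^{-mnt}\al_i(x)^{\beta_i}$. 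Adding the two estimates produces the claimed inequality.
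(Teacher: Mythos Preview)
Your overall plan (submodularity dichotomy plus Proposition~\ref{LinAlgEst}) is the right one, but the integration of the sum term has a genuine gap. Your dichotomy is taken at $y=g_tu_\bs x$, so the unique subspace $L^\ast=L^\ast(\bs)$ selected by the indicator varies with $\bs$. Disjointness of the supports $A_L=\{\bs:\text{the indicator is }1\text{ for }L\}$ only tells you that $\sum_{L\in S}\int_{A_L}\|g_tu_\bs v_L\|^{-\beta_i}\,d\mu_{mn}(\bs)$ is a sum of integrals over pairwise disjoint regions; it does \emph{not} allow you to bound the total by a single application of Proposition~\ref{LinAlgEst}. A naive bound produces an extra factor $|S|$, where $S=\{L\in F_i(x):\|v_L\|^{-1}>\om^{-1}\phi_i(x)\}$, and nothing in your argument controls $|S|$ uniformly in $x$ or $t$ (indeed, the dichotomy at $x$ with threshold $\om^{-1}<1$ does not force $|S|\le 1$). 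Neither disjoint supports nor finiteness of $S$ removes this factor, so the claimed first summand $ctq_\nu^{-mnt}\al_i(x)^{\beta_i}$ is not justified.

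The fix, which is what the paper does, is to run the dichotomy at $x$ rather than at $y$, so that the case split is independent of $\bs$. Let $L_i\in F_i(x)$ realize $\al_i(x)$ and set $\Psi_i=\{L\in F_i(x):\|L\|<\om^{2}\|L_i\|\}$ with $\om=q_\nu^{mnt}$. If $\Psi_i=\{L_i\}$, the two-sided bound $\om^{-1}\|L\|\le\|g_tu_\bs L\|\le\om\|L\|$ (valid since $u_\bs$ preserves wedge norms and both $g_t,g_{-t}$ have operator norm $\le\om$ on every $\bigwedge^k$) forces $\|g_tu_\bs L_i\|\le\|g_tu_\bs L\|$ for every $L\ne L_i$ and every $\bs$; hence $\al_i(g_tu_\bs x)^{\beta_i}=\|g_tu_\bs v_{L_i}\|^{-\beta_i}$ for all $\bs$, and a single application of Proposition~\ref{LinAlgEst} to the fixed vector $v_{L_i}$ yields the first summand. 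If $\Psi_i\supsetneq\{L_i\}$, pick any $L'\ne L_i$ in $\Psi_i$; your submodularity computation applied to $L_i,L'$ (with $j=\dim(L'+L_i)-i$) gives $\al_i(g_tu_\bs x)\le\om\,\al_i(x)\le\om^{2}\phi_i(x)$ for every $\bs$, and integrating this constant over $M_{m,n}(\cO_\nu)$ gives the second summand. The essential point is that this dichotomy does not depend on $\bs$, which is exactly what makes the integral tractable.
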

\begin{proof}
Fix $i \in \llbracket 1, d-1 \rrbracket$ and $x\in X$.
Let $L_i \in F_i(x)$ be such that $\al_i(x)=\frac{1}{\|L_i\|}$ and let 
\[
\Psi_i = \{L\in F_i(x) : \|L\| < q_\nu^{2mnt} \|L_i\|\}.
\]
For any $L \in F_i(x) \smallsetminus \Psi_i$ and $\bs \in M_{m,n}(\cO_\nu)$, since $u_\bs \in \mathrm{SL}_d(\cO_\nu)$, it follows for instance from \cite[Lemma 2.5]{Bang} that
\eqlabel{Eq_nbd1}{
\|u_\bs L_i\| = \|L_i\| \leq q_\nu^{-2mnt}\|L\| = q_\nu^{-2mnt}\|u_\bs L\|.
}
Observe that for any $j\in \llbracket 1, d \rrbracket$ and $I \in \wp^{d}_{j}$, we have $q_\nu^{-mnt} \leq \|g_t \be_I\| \leq q_\nu^{mnt}$, hence for any $\bv \in\bigwedge^j \mathbf K_\nu^d$, it follows that
\eqlabel{Eq_nbd2}{
q_\nu^{-mnt}\|\bv\| \leq \|g_t \bv\| \leq q_\nu^{mnt}\|\bv\|
}
Thus it follows from \eqref{Eq_nbd1} and \eqref{Eq_nbd2} that
\eqlabel{Eq_nbd3}{
\|g_t u_\bs L_i\| \leq q_\nu^{mnt}\|u_\bs L_i\| \leq q_\nu^{-mnt}\|u_\bs L\| \leq \|g_t u_\bs L\|.
}
We consider two cases $\Psi_i = \{L_i\}$ and $\Psi_i \supsetneq \{L_i\}$, separately.\\ \\ 
\textbf{Case $\Psi_i = \{L_i\}$}: Using \eqref{Eq_nbd3} and Proposition \ref{LinAlgEst}, we have
\eq{
\begin{split}
\int_{M_{m,n}(\cO_\nu)} \al_i(g_t u_\bs x)^{\beta_i} d\mu_{mn}(\bs) 
&\leq \int_{M_{m,n}(\cO_\nu)} \|g_t u_\bs L_i \|^{-\beta_i} d\mu_{mn}(\bs) \\
&\leq c t q_\nu^{-mnt} \|L_i\|^{-\beta_i} =ctq_\nu^{-mnt}\al_i(x)^{\beta_i}.
\end{split}
}
\textbf{Case $\Psi_i \supsetneq \{L_i\}$}: Take $L' \in \Psi_i$ such that $L' \neq L_i$. Since $\dim(L'+L_i) = i + j$ for some $j >0$, it follows from \cite[Theorem 1.4]{Bang} using the above assumption that for all $\bs \in M_{m,n}(\cO_\nu)$,
\eq{
\begin{split}
\al_i(g_t u_\bs x) &\leq q_\nu^{mnt}\al_i(x) = \frac{q_\nu^{mnt}}{\|L_i\|} \leq \frac{q_\nu^{2mnt}}{\sqrt{\|L'\|\|L_i\|}}\\
&\leq \frac{q_\nu^{2mnt}}{\sqrt{\|L' \cap L_i\| \|L' +L_i\|}} \leq q_\nu^{2mnt}\sqrt{\al_{i+j}(x)\al_{i-j}(x)}.
\end{split}
}
Thus it follows from $\mu_{mn}(M_{m,n}(\cO_\nu))=1$ that
\eq{
\int_{M_{m,n}(\cO_\nu)} \al_i (g_t u_\bs x)^{\beta_i} d\mu_{mn}(\bs) \leq q_\nu^{2mnt\beta_i}\max_{1\leq j \leq \min\{d-i,i\}} \left(\sqrt{\al_{i+j}(x)\al_{i-j}(x)}\right)^{\beta_i}.
}
Combining the two cases, we obtain Corollary \ref{CorAlEst}. 
\end{proof}

Using Corollary \ref{CorAlEst} and \cite[Proposition 4.1]{KKLM}, we have the following corollary.
\begin{cor}\label{CorMarFtn}
Let $\beta_i$'s and $c>0$ be as in Proposition \ref{LinAlgEst}. For any $t \in \bNN$, one can choose constants $\om_0,\dots,\om_d >0 $ and $T>0$ such that for any $x\in X$ with $\wt{\al} = \sum_{i=0}^{d} (\om_i \al_i)^{\beta_i}$ satisfying $\wt{\al}(x) > T$, we have 
\[
\int_{M_{m,n}(\cO_\nu)} \wt{\al}(g_t u_\bs x) d\mu_{mn}(\bs) \leq 3ctq_\nu^{-mnt}\wt{\al}(x).
\]
\end{cor}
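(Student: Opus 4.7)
The plan is to combine the per-dimension estimates from Corollary \ref{CorAlEst} with weights $\om_i$ chosen so that the ``cross terms'' get absorbed into a single contraction, following the scheme of \cite[Proposition 4.1]{KKLM}. Write $A := ctq_\nu^{-mnt}$ and $\om := q_\nu^{mnt}$, and adopt the convention $\al_0 \equiv \al_d \equiv 1$ and $\om_0 = \om_d := 1$. Applying Corollary \ref{CorAlEst} to each $i \in \{1,\dots,d-1\}$ and summing with weights $\om_i^{\beta_i}$ gives
\[
\int_{M_{m,n}(\cO_\nu)} \wt{\al}(g_t u_\bs x)\, d\mu_{mn}(\bs) \;\leq\; 2 + A\, \wt{\al}(x) + \sum_{i,j} T_{i,j}(x),
\]
where $T_{i,j}(x) := \om_i^{\beta_i}\, \om^{2\beta_i}\, \bigl(\al_{i+j}(x)\, \al_{i-j}(x)\bigr)^{\beta_i/2}$ and the sum runs over $1\leq i \leq d-1$ and $1\leq j \leq \min(i, d-i)$. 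It then suffices to absorb $\sum_{i,j} T_{i,j}(x)$ into $A\, \wt{\al}(x)$ up to additive constants, and to use $\wt{\al}(x) > T$ to dispose of those constants.

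The main step is to take $\om_i := \om^{N_i}$ for a strictly convex sequence $(N_i)_{i=0}^{d}$ with $N_0 = N_d = 0$, for instance $N_i := -N\, i(d-i)$ with $N$ a large integer depending only on $m,n,c$. Apply Young's inequality in the form $a^p b^q \leq pa + qb + (1-p-q)$ (valid for $a,b\geq 0$ and $p + q \leq 1$) with
\[
a := (\om_{i+j}\,\al_{i+j}(x))^{\beta_{i+j}}, \quad b := (\om_{i-j}\,\al_{i-j}(x))^{\beta_{i-j}}, \quad p := \tfrac{\beta_i}{2\beta_{i+j}},\quad q := \tfrac{\beta_i}{2\beta_{i-j}}.
\]
A short case analysis using the explicit formula for $\beta_i$ confirms $p + q \leq 1$ in the three regimes $i+j \leq m$, $i-j \geq m$, and the mixed range $i-j \leq m < i+j$ (with equality in the first two). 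Computing $a^p b^q$ explicitly, one finds
\[
T_{i,j}(x) \;=\; \om^{\beta_i[2N_i + 4 - (N_{i+j} + N_{i-j})]/2}\; a^p b^q \;\leq\; \om^{-K}\,(a + b + 1),
\]
where $K$ can be made as large as wished by taking $N$ so that $N_{i+j} + N_{i-j} - 2 N_i \geq 4 + 2K/\min_i \beta_i$ uniformly in $i,j$; the choice $N_i = -N\,i(d-i)$ yields second differences $2N j^2$, which is enough for any prescribed $K$ once $N$ is large. Picking $K$ with $\om^{-K} \cdot O(d^2) \leq A/O(d)$ (possible since $A = ct/\om$), summation over $(i,j)$ yields $\sum_{i,j} T_{i,j}(x) \leq A\, \wt{\al}(x) + C_0$ with a constant $C_0 = C_0(m,n,c,t)$.

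Putting the pieces together gives $\int \wt{\al}(g_t u_\bs x)\, d\mu_{mn}(\bs) \leq 2A\, \wt{\al}(x) + 2 + C_0$, and choosing $T := (2 + C_0)/A$ forces the additive constants to be absorbed into $A\, \wt{\al}(x)$ on the set $\{\wt{\al}(x) > T\}$, producing the asserted bound $3A\, \wt{\al}(x) = 3ctq_\nu^{-mnt}\, \wt{\al}(x)$. The main obstacle is the combinatorial bookkeeping to show that a single choice of convex weights $(N_i)$ simultaneously handles all three regimes, and in particular the verification $p + q \leq 1$ across the mixed range; this is exactly the content of \cite[Proposition 4.1]{KKLM}, whose proof uses only the submodularity already built into Corollary \ref{CorAlEst} via \cite[Theorem 1.4]{Bang}, so it transfers without modification to the function field setting.
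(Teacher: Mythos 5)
Your proposal is correct and follows essentially the same route as the paper: the paper's proof simply verifies the hypotheses of \cite[Proposition 4.1]{KKLM} using Corollary \ref{CorAlEst} (with $a=ctq_\nu^{-mnt}$, $a'=2ctq_\nu^{-mnt}$, $\beta(i)=1/\beta_i$) and then absorbs the additive constant $C_0$ by taking $T=C_0q_\nu^{mnt}/(ct)$, exactly as you do in your last step. The only difference is that you unfold the proof of \cite[Proposition 4.1]{KKLM} (convex exponents $N_i$, weighted AM--GM, concavity of $i\mapsto 1/\beta_i$) where the paper cites it as a black box; your unfolding is sound.
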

\begin{proof} Fix $t\in \bNN$.
To use \cite[Proposition 4.1]{KKLM}, we set $H=X$, $d=m+n$, $f_i = \al_i$ so that $f_0=f_d=1$, the linear operator $A$ on the linear space of real functions on $H$ given by 
\[
(Af)(x) = \int_{M_{m,n}(\cO_\nu)} f(g_t u_\bs x) d\mu_{mn}(\bs) 
\] for $\beta(i)=1/\beta_i$. Note that $i\mapsto \beta(i)$ is concave. It follows from Corollary \ref{CorAlEst} that for each $i \in \llbracket 1, d-1 \rrbracket$, we have
\[
\begin{split}
A(f_i^{\beta_i})(x) &= \int_{M_{m,n}(\cO_\nu)}\al_i(g_t u_\bs x)^{\beta_i} d\mu_{mn}(\bs)  \\
&\leq ctq_\nu^{-mnt} f_i^{\beta_i}(x) + \left(q_\nu^{mnt}\right)^{2\beta_i} \max_{1\leq j \leq \min\{d-i,i\}} \left(\sqrt{f_{i+j}(x)f_{i-j}(x)}\right)^{\beta_i}.
\end{split}
\]
Setting $a=ctq_\nu^{-mnt}$, $\om=q_\nu^{mnt}$, and $a'=2ctq_\nu^{-mnt}$, and using \cite[Proposition 4.1]{KKLM}, we can choose 
$\om_0,\dots,\om_d >0 $ and $C_0$ such that $f = \sum_{i=0}^{d} (\om_i f_i)^{\beta_i}$ satisfies that
\[
(Af)(x) \leq a' f(x) + C_0 \quad \text{for all } x\in X.
\]
Thus the function $\wt{\al} = \sum_{i=0}^{d} (\om_i \al_i)^{\beta_i}$ satisfies that for all $x\in X$
\[
\int_{M_{m,n}(\cO_\nu)} \wt{\al}(g_t u_\bs x) d\mu_{mn}(\bs) \leq 2ctq_\nu^{-mnt} \wt{\al}(x) + C_0.
\]
By choosing $T=\frac{C_0 q_\nu^{mnt}}{ct}$, we can conclude Corollary \ref{CorMarFtn}.
\end{proof}

\section{Covering estimates using Margulis function}\label{sec6}
In this section, we fix $c>0$ as in Proposition \ref{LinAlgEst} and for a given $t \in \bNN$, fix $\om_1,\dots,\om_d$ and $T$ as in Corollary \ref{CorMarFtn}, and write $\wt{\al} = \sum_{i=0}^{d} (\om_i \al_i)^{\beta_i}$.  
For any $x\in X$, $M>0$, $N\in \bNN$, let us define
\[
Z_x (M,N,t) := \{\bs \in M_{m,n}(\cO_\nu): \forall \ell \in \llbracket 1, N\rrbracket, ~ \wt{\al}(g_{\ell t} u_\bs x) >M\}.
\]
By Mahler's compactness criterion for the space of unimodular lattices over global function fields (e.g., see \cite[Theorem 1.1]{KST}),
$\wt{\al}$ is a proper map, hence the set 
\[
X_{\leq M} := \{x\in X : \wt{\al} (x) \leq M\} 
\]
is compact. Similarly, we define $X_{>M} = X \smallsetminus X_{\leq M}$.

First observe that for any $\bs\in M_{m,n}(\cO_\nu)$ and $x\in X$, it follows from the fact that $u_\bs \in \SL_d(\cOnu)$ and from \cite[Lemma 2.5]{Bang} that
\eqlabel{Eq_Norm_Inv}{
\wt{\al}(u_\bs x)=\wt{\al}(x).
}
We have the following main proposition. Unlike in the previous sections, from this point on, $\bs_i$ are matrices rather than column vectors of a matrix. We also denote $d\mu_{mn}(\bs)$ by $d\bs$ for simplicity.
\begin{prop}\label{EstProp}
	Let $t \in \bNN$ be fixed. For all $x\in X_{>T}$, $N\in \bNN$, and $M\geq T$, we have
\[
\int_{Z_x (M,N-1,t)} \wt{\al}(g_{Nt}u_\bs x) d\mu_{mn}(\bs) \leq \left(3ct q_\nu^{-mnt}\right)^N \wt{\al}(x).
\]
\end{prop}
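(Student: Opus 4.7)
I would argue by induction on $N$. The base case $N=1$ is immediate: $Z_x(M,0,t) = M_{m,n}(\cO_\nu)$ since the defining constraint is vacuous, and $\wt{\al}(x) > T$, so Corollary \ref{CorMarFtn} applied at $x$ directly gives
\[
\int_{M_{m,n}(\cO_\nu)} \wt{\al}(g_t u_\bs x)\,d\mu_{mn}(\bs) \leq 3ctq_\nu^{-mnt}\,\wt{\al}(x).
\]
For the inductive step from $N$ to $N+1$, the plan is to establish the ``one-step contraction on the residual domain''
\[
\int_{Z_x(M, N, t)} \wt{\al}(g_{(N+1)t} u_\bs x)\,d\mu_{mn}(\bs) \leq 3ctq_\nu^{-mnt}\int_{Z_x(M, N, t)} \wt{\al}(g_{Nt} u_\bs x)\,d\mu_{mn}(\bs).
\]
Combined with $Z_x(M, N, t) \subseteq Z_x(M, N-1, t)$, non-negativity of $\wt{\al}$, and the induction hypothesis, this yields the desired bound at level $N+1$.

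The key to the one-step contraction is a partition of $Z_x(M, N, t)$ into ultrametric balls on which the trajectory up to time $Nt$ is frozen. Using commutativity of $U$ together with the conjugation identity $g_{\ell t} u_\bv g_{-\ell t} = u_{\pi_\nu^{-(m+n)\ell t}\bv}$, one checks that whenever $\bs_2 - \bs_1 \in \pi_\nu^{(m+n)Nt}M_{m,n}(\cO_\nu)$ and $\ell \in \llbracket 1, N\rrbracket$, the element $u_{\pi_\nu^{-(m+n)\ell t}(\bs_2 - \bs_1)}$ lies in $U(\cO_\nu)$. Invariance of $\wt{\al}$ under $U(\cO_\nu)$ (equation \eqref{Eq_Norm_Inv}) then forces $\wt{\al}(g_{\ell t} u_{\bs_2}x) = \wt{\al}(g_{\ell t} u_{\bs_1}x)$ for all $\ell \leq N$. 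Consequently, $Z_x(M, N, t)$ decomposes as a disjoint union of balls $B_\alpha = \bs_\alpha + \pi_\nu^{(m+n)Nt}M_{m,n}(\cO_\nu)$, and on each $B_\alpha$ the value $\wt{\al}(g_{Nt} u_\bs x)$ equals $\wt{\al}(y_\alpha)$, where $y_\alpha := g_{Nt} u_{\bs_\alpha} x$.

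On a single ball $B_\alpha$, I would parametrize $\bs = \bs_\alpha + \pi_\nu^{(m+n)Nt}\bs'$ with $\bs' \in M_{m,n}(\cO_\nu)$. The same conjugation identity yields the crucial decomposition
\[
g_{(N+1)t} u_\bs x = g_t u_{\bs'}\, y_\alpha,
\]
and the change of variable has Jacobian $d\mu_{mn}(\bs) = q_\nu^{-mn(m+n)Nt}\,d\mu_{mn}(\bs')$. Since $\wt{\al}(y_\alpha) > M > T$, Corollary \ref{CorMarFtn} applies to $y_\alpha$ and bounds $\int_{M_{m,n}(\cO_\nu)} \wt{\al}(g_t u_{\bs'} y_\alpha)\,d\mu_{mn}(\bs')$ by $3ctq_\nu^{-mnt}\,\wt{\al}(y_\alpha)$. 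Absorbing the Jacobian and invoking $\wt{\al}(y_\alpha)\,\mu_{mn}(B_\alpha) = \int_{B_\alpha}\wt{\al}(g_{Nt} u_\bs x)\,d\mu_{mn}(\bs)$ produces the ball-wise one-step contraction; summing over $\alpha$ completes the inductive step.

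The main obstacle is identifying the correct scale of locality. Balls of radius $q_\nu^{-(m+n)Nt}$ are precisely the scale at which (i) the map $\bs \mapsto \wt{\al}(g_{\ell t} u_\bs x)$ is constant for all $\ell \leq N$, so that $Z_x(M, N, t)$ is a union of such balls, and (ii) a perturbation of $\bs$ within such a ball converts, after $g_{Nt}$-flow, into a $U(\cO_\nu)$-translate of the anchor orbit, producing the clean identity $g_{(N+1)t} u_\bs x = g_t u_{\bs'}\, y_\alpha$. Once this matching is set up, Corollary \ref{CorMarFtn} plugs in fiberwise at each $y_\alpha$ and the contraction factor $3ctq_\nu^{-mnt}$ accumulates multiplicatively over the $N$ iterations; everything else is bookkeeping with the non-archimedean Jacobian.
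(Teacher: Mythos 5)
Your proof is correct, but it takes a genuinely different route from the paper. The paper lifts the problem to the $N$-fold product $M_{m,n}(\cO_\nu)^N$ via the composition identity $g_t u_{\bs_N}\cdots g_t u_{\bs_1}=g_{Nt}u_{\phi(\bs_1,\dots,\bs_N)}$ with $\phi(\bs_1,\dots,\bs_N)=\sum_i \pi_\nu^{(i-1)(m+n)t}\bs_i$, applies Corollary \ref{CorMarFtn} iteratively on the product set $Z$ of trajectories staying above height $T$, and then pushes the estimate back down to $Z_x(M,N-1,t)$ using the convolution measures $\wt{\mu}^{\ast N}$ of Guan--Shi, the key technical inputs being that $\frac{d\wt{\mu}^{\ast N}}{d\mu}\geq 1$ on $M_{m,n}(\cO_\nu)$ and the set-theoretic claims $\phi^{-1}(\phi(Z))=Z$ and $\phi^{-1}(Z_x(M,N-1,t))\subset Z$. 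You instead stay in a single copy of $M_{m,n}(\cO_\nu)$ and run a direct induction: your observation that $Z_x(M,N,t)$ is a finite disjoint union of cosets of $\pi_\nu^{(m+n)Nt}M_{m,n}(\cO_\nu)$, on each of which the first $N$ steps of the trajectory are frozen, together with the exact identity $g_{(N+1)t}u_{\bs_\alpha+\pi_\nu^{(m+n)Nt}\bs'}x=g_tu_{\bs'}y_\alpha$ and the Jacobian $q_\nu^{-mn(m+n)Nt}=\mu_{mn}(B_\alpha)$, makes the one-step contraction an honest fiberwise application of Corollary \ref{CorMarFtn} at each anchor $y_\alpha$ (legitimate since $\wt{\al}(y_\alpha)>M>T$). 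The two arguments encode the same self-similarity — your ball decomposition is exactly what the paper's map $\phi$ and the lower bound on the Radon--Nikodym derivative express implicitly — but yours is more elementary, avoids the convolution machinery entirely, and exploits the clopen ultrametric ball structure that is unavailable in the archimedean setting of \cite{KKLM} (which is why that paper, and this one following \cite{GS}, resort to Gaussian recursion or convolutions). Both yield the same constant $\left(3ctq_\nu^{-mnt}\right)^N$. One small presentational point: you should note explicitly that $Z_x(M,N,t)\subseteq Z_x(M,N-1,t)$ and $\wt{\al}\geq 0$ are what let you pass from the integral over $Z_x(M,N,t)$ to the induction hypothesis stated over $Z_x(M,N-1,t)$ — you do say this, and it is the only place monotonicity of the domains is used.
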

\begin{rem}
This proposition is a function field version of \cite[Proposition 5.1]{KKLM}. It is worth noting that while a certain recursive property of Gaussian measures was used in \cite{KKLM}, we use convolutions of measures inspired by \cite{GS}.
\end{rem}
\begin{proof}
For a fixed $t \in \bNN$, let us denote 
\[
Z=\{ (\bs_1,\dots, \bs_N)\in M_{m,n}(\cO_\nu)^N : \wt{\al}(g_t u_{\bs_k}\cdots g_t u_{\bs_1} x) > T \quad \forall k \in \llbracket 1, N-1 \rrbracket \}.
\]
Using Corollary \ref{CorMarFtn} repeatedly, we have
\eqlabel{EqMultInt}{
\int\cdots\int_{Z} \wt{\al}(g_t u_{\bs_N}\cdots g_t u_{\bs_1}x) d\bs_N \cdots d\bs_1
\leq \left(3ctq_\nu^{-mnt}\right)^N \wt{\al}(x).
}
Observe that 
\eq{
g_t u_{\bs_N}\cdots g_t u_{\bs_1}= g_{Nt} u_{\phi(\bs_1,\dots,\bs_N)},
}
where $\phi(\bs_1,\dots,\bs_N)=\sum_{i=1}^N \pi_\nu^{(i-1)dt}\bs_i$.
Thus we have 
\eqlabel{EqIntegra}{
\begin{split}
&\int\cdots\int_{Z} \wt{\al}(g_t u_{\bs_N}\cdots g_t u_{\bs_1}x) d\bs_N \cdots d\bs_1
=\int\cdots\int_{Z} \wt{\al}(g_{Nt} u_{\phi(\bs_1,\dots,\bs_N)}x) d\bs_N \cdots d\bs_1
\end{split}
}
Following \cite[Section 5]{GS}, for each $i \in \llbracket 1, N \rrbracket$, let $\wt{\mu}_i$ be the Radon measure on $M_{m,n}(\cO_\nu)$ defined by 
\[
\int_{M_{m,n}(\cO_\nu)}f(\bs)d\wt{\mu}_i(\bs) = \int_{M_{m,n}(\cO_\nu)} f(\pi_\nu^{(i-1)dt}\bs)d\bs 
\]
for $f \in C(M_{m,n}(\cO_\nu)) $. Note for future use that $\wt{\mu}_1 = \overline{\mu_{mn}}$. Let $\wt{\mu}^{\ast N} = \wt{\mu}_N \ast \cdots\ast \wt{\mu}_1$ be the measure on
$M_{m,n}(\cO_\nu)$ defined by iterated convolutions.
Observe that 
\eqlabel{Eq_CovInt}{\wt{\mu}^{\ast N}(f) = \int_{M_{m,n}(\cO_\nu)}\cdots\int_{M_{m,n}(\cO_\nu)} f(\phi(\bs_1,\dots,\bs_N))d\bs_1\cdots d\bs_N.}
\vspace{0.3cm}\\ 
\textbf{Claim 1.}\; $\{(\bs_1,\dots,\bs_N)\in M_{m,n}(\cO_\nu)^N : \phi(\bs_1,\dots,\bs_N) \in \phi(Z)\} =Z$.
\begin{proof}[Proof of Claim 1]
Clearly, the set $Z$ is contained in the left hand side set. If $(\bs_1,\dots,\bs_N)$ is an element of the left hand side set, 
then there exists $(\bs_1',\dots,\bs_N')\in Z$ such that $\phi(\bs_1,\dots,\bs_N)=\phi(\bs_1',\dots,\bs_N')$.
Hence for any $k \in \llbracket 1, N-1 \rrbracket$,
\eq{
\begin{split}
\wt{\al}(g_t u_{\bs_k}\cdots g_t u_{\bs_1}x) 
&=\wt{\al}(g_t u_{\bs_k}\cdots g_t u_{\bs_1} (g_t u_{\bs_k'}\cdots g_t u_{\bs_1'})^{-1} g_t u_{\bs_k'}\cdots g_t u_{\bs_1'} x) \\
&=\wt{\al}((g_{kt} u_{\sum_{i=1}^k \pi_\nu^{(i-1)dt}(\bs_i-\bs_i')} g_{-kt}) g_t u_{\bs_k'}\cdots g_t u_{\bs_1'} x) \\
&=\wt{\al}(u_{\pi_\nu^{-kdt}\sum_{i=1}^k \pi_\nu^{(i-1)dt}(\bs_i-\bs_i')} g_t u_{\bs_k'}\cdots g_t u_{\bs_1'} x) \\
&=\wt{\al}(u_{\pi_\nu^{-kdt}\sum_{i=k+1}^N \pi_\nu^{(i-1)dt}(\bs_i'-\bs_i)} g_t u_{\bs_k'}\cdots g_t u_{\bs_1'} x)\\
&=\wt{\al}(g_t u_{\bs_k'}\cdots g_t u_{\bs_1'} x) >T.
\end{split}
}
The fourth line holds since $\phi(\bs_1,\dots,\bs_N)=\phi(\bs_1',\dots,\bs_N')$ and the last line since $\pi_\nu^{-kdt}\sum_{i=k+1}^N \pi_\nu^{(i-1)dt}(\bs_i'-\bs_i) \in M_{m,n}(\cO_\nu)$ and \eqref{Eq_Norm_Inv}. Thus $(\bs_1,\dots,\bs_N)\in Z$.
\end{proof}
Let us denote by $\vphi_x(\bs) = \mathds{1}_{\phi(Z)}(\bs) \wt{\al}(g_{Nt}u_\bs x)$. Using Fubini's theorem, it follows from \eqref{EqIntegra}, \eqref{Eq_CovInt}, and \textbf{Claim 1} that
\eqlabel{Eq_Int}{
\int\cdots\int_{Z} \wt{\al}(g_t u_{\bs_N}\cdots g_t u_{\bs_1}x) d\bs_N \cdots d\bs_1= \wt{\mu}^{\ast N}(\vphi_x).
} 
While $\wt{\mu}^{\ast N}$ is absolutely continuous with respect to $\overline{\mu_{mn}}$, the ultrametric property implies that they coincide. Such an exact coincidence does not occur in the real case. See \cite[Lemma 5.5]{GS}.
\vspace{0.3cm}\\ 
\textbf{Claim 2.}\; 
We have $\wt{\mu}^{\ast N}=\overline{\mu_{mn}}$.
\begin{proof}[Proof of Claim 2]

Since $\wt{\mu}_1=\overline{\mu_{mn}}$ is invariant under translations on $M_{m,n}(\cO_\nu)$,
it follows that for any $f\in C(M_{m,n}(\cO_\nu))$, 
\[\begin{split}
\wt{\mu}^{\ast N}(f) &= \int_{M_{m,n}(\cO_\nu)}\cdots\int_{M_{m,n}(\cO_\nu)} f(\bs_1 +\sum_{i=2}^N \pi_\nu^{(i-1)dt}\bs_i)d\bs_1\cdots d\bs_N\\
&=\int_{M_{m,n}(\cO_\nu)}\cdots\int_{M_{m,n}(\cO_\nu)} f(\bs_1 )d\bs_1\cdots d\bs_N\\
&=\overline{\mu_{mn}}(f).
\end{split}\]

\end{proof}
It follows from \textbf{Claim 2} and the fact that $\phi(Z) \subset M_{m,n}(\cO_\nu)$ that
\eqlabel{Eq_Int2}{
	\wt{\mu}^{\ast N} (\vphi_x) =\overline{\mu_{mn}} (\vphi_x) = \int \mathds{1}_{\phi(Z)}(\bs) \wt{\al}(g_{Nt}u_\bs x) d\bs.
}
We need the following claim.\vspace{0.3cm}\\ 
\textbf{Claim 3.}\; For any $\bs\in Z_x (M,N-1,t)$, $\phi^{-1}(\bs) \subset Z$.
\begin{proof}[Proof of Claim 3]
If $(\bs_1,\dots,\bs_N) \in \phi^{-1}(\bs)$, then $\sum_{i=1}^N \pi_\nu^{(i-1)dt}\bs_i \in Z_x(M,N-1,t)$, 
hence for all $\ell \in \llbracket 1, N-1 \rrbracket$,
\eqlabel{Eq_Cl1_1}{
\wt{\al}\left(\left(g_{\ell t}u_{\sum_{i=\ell+1}^N \pi_\nu^{(i-1)dt}\bs_i}g_{-\ell t}\right)g_{\ell t} u_{\sum_{i=1}^\ell \pi_\nu^{(i-1)dt}\bs_i} x\right)>M.
}
Note that 
\eqlabel{Eq_Cl1_2}{
g_{\ell t}u_{\sum_{i=\ell+1}^N \pi_\nu^{(i-1)dt}\bs_i}g_{-\ell t} = u_{\sum_{i=\ell+1}^N \pi_\nu^{(i-1-\ell)dt}\bs_i}
}
and $\sum_{i=\ell+1}^N \pi_\nu^{(i-1-\ell)dt}\bs_i \in M_{m,n}(\cO_\nu)$. Thus it follows from \eqref{Eq_Norm_Inv}, \eqref{Eq_Cl1_1}, and \eqref{Eq_Cl1_2} that
\eq{
\wt{\al}(g_{\ell t} u_{\sum_{i=1}^\ell \pi_\nu^{(i-1)dt}\bs_i} x) =\wt{\al}(g_t u_{\bs_\ell}\cdots g_t u_{\bs_1}x)> M\geq T,
}
which proves $(\bs_1,\dots,\bs_N)\in Z$.
\end{proof}
Combining \eqref{EqMultInt}, \eqref{Eq_Int}, \eqref{Eq_Int2}, and \textbf{Claim 3}, we have
\[
\begin{split}
\left(3ctq_\nu^{-mnt}\right)^N \wt{\al}(x) \geq \wt{\mu}^{\ast N} (\vphi_x) 
&\geq \int \mathds{1}_{\phi(Z)}(\bs) \wt{\al}(g_{Nt}u_\bs x) d\bs\\ 
&\geq \int_{Z_x(M,N-1,t)}\wt{\al}(g_{Nt}u_\bs x) d\bs.
\end{split}
\]
\end{proof}

\begin{cor}\label{CorEst}
Let $t \in \bNN$ be fixed. There exists $M_0 =M_0(t)>T$ such that for all $x\in X$, $N\in \bNN$, $M>M_0$, the set $Z_x(M,N,t)$ can be covered by 
$\frac{\wt{\al}(x)}{M}(3ct)^N q_\nu^{(m+n-1)mntN}$ balls in $M_{m,n}(\cO_\nu)$ of radius $q_\nu^{-(m+n)tN}$.
\end{cor}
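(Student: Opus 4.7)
My plan is to combine the integral bound of Proposition \ref{EstProp} with Markov's inequality, then upgrade the resulting measure bound into a count of balls using the ultrametric feature that the indicator of $Z_x(M,N,t)$ is locally constant at scale $q_\nu^{-(m+n)tN}$. The constant $M_0$ is introduced only to absorb constants in the case $\wt{\al}(x)\leq T$, where Proposition \ref{EstProp} is not directly available.

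The key constancy is proved as follows. The block identity $g_{\ell t}u_{\bs}=u_{\pi_\nu^{-(m+n)\ell t}\bs}\,g_{\ell t}$ gives, for $\bs,\bs'\in M_{m,n}(\cO_\nu)$ with $\|\bs-\bs'\|\leq q_\nu^{-(m+n)tN}$ and any $\ell\in\llbracket 1,N\rrbracket$,
$$g_{\ell t}u_{\bs}x=u_{\pi_\nu^{-(m+n)\ell t}(\bs-\bs')}\,g_{\ell t}u_{\bs'}x.$$
Since $\|\pi_\nu^{-(m+n)\ell t}(\bs-\bs')\|\leq q_\nu^{(m+n)(\ell-N)t}\leq 1$, the left factor lies in $U(\cO_\nu)\subset\SL_d(\cO_\nu)$, and the $\SL_d(\cO_\nu)$-invariance of $\wt{\al}$ from \eqref{Eq_Norm_Inv} and \cite[Lemma 2.5]{Bang} yields $\wt{\al}(g_{\ell t}u_{\bs}x)=\wt{\al}(g_{\ell t}u_{\bs'}x)$. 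Hence $Z_x(M,N,t)\cap M_{m,n}(\cO_\nu)$ is a disjoint union of closed balls of radius $q_\nu^{-(m+n)tN}$, each of Haar measure $q_\nu^{-mn(m+n)tN}$.

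For $\wt{\al}(x)>T$, since $Z_x(M,N,t)\subset Z_x(M,N-1,t)$ and $\wt{\al}(g_{Nt}u_{\bs}x)>M$ on the former, Markov's inequality combined with Proposition \ref{EstProp} yields
$$M\cdot \mu_{mn}(Z_x(M,N,t))\leq \int_{Z_x(M,N-1,t)}\wt{\al}(g_{Nt}u_{\bs}x)\,d\mu_{mn}(\bs)\leq (3ct\,q_\nu^{-mnt})^N\wt{\al}(x),$$
and dividing the resulting measure bound by $q_\nu^{-mn(m+n)tN}$, using $q_\nu^{mn(m+n)tN-mntN}=q_\nu^{(m+n-1)mntN}$, gives the stated number of balls.

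It remains to reduce the case $\wt{\al}(x)\leq T$ to the previous one. I would partition $M_{m,n}(\cO_\nu)$ into balls $B$ of radius $q_\nu^{-(m+n)t}$; on each $B$, the lattice $y_B:=g_tu_{\bs}x$ is constant by the previous constancy argument, and the affine change of variable $\bs''=\pi_\nu^{-(m+n)t}(\bs-\bs_B)$ identifies $B\cap Z_x(M,N,t)$ with $Z_{y_B}(M,N-1,t)\subset M_{m,n}(\cO_\nu)$ (the condition $\wt{\al}(y_B)>M$ being automatic on any $B$ meeting $Z_x(M,N,t)$), while rescaling balls of radius $q_\nu^{-(m+n)t(N-1)}$ to balls of radius $q_\nu^{-(m+n)tN}$. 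Since $y_B\in X_{>M}\subset X_{>T}$, the previous case gives a $K_B$-ball covering of each $B\cap Z_x(M,N,t)$; summing over $B$ and using Corollary \ref{CorMarFtn} in its underlying form $\int\wt{\al}(g_tu_{\bs}x)\,d\mu_{mn}(\bs)\leq 2ctq_\nu^{-mnt}\wt{\al}(x)+C_0$ (valid for all $x$, as its proof shows) to bound $\sum_B\wt{\al}(y_B)$ delivers the stated covering count; the specific value $T=C_0q_\nu^{mnt}/(ct)$ fixed in Corollary \ref{CorMarFtn} is precisely what makes the additive $C_0$ absorb into an extra factor of $3ct\,q_\nu^{-mnt}$. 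The main (purely bookkeeping) obstacle is verifying that the $N$- versus $(N-1)$-dependent prefactors collapse exactly as claimed; choosing $M_0>T$ large enough guarantees the arithmetic closes.
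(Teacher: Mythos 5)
Your treatment of the main case $\wt{\al}(x)>T$ is correct and is exactly the paper's argument: Markov's inequality applied to the integral bound of Proposition \ref{EstProp}, combined with the ultrametric local constancy $g_{\ell t}u_{\bs}=u_{\pi_\nu^{-(m+n)\ell t}\bs}g_{\ell t}$ and \eqref{Eq_Norm_Inv}, which shows $Z_x(M,N,t)$ is a union of whole balls of radius $q_\nu^{-(m+n)tN}$, so the measure bound divides out to the stated count.

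The gap is in the case $\wt{\al}(x)\leq T$, where you have misidentified the role of $M_0$. The paper's choice is $M_0=T\max_i q_\nu^{mnt\beta_i}$: the one-step bound $\wt{\al}(g_tu_{\bs}x)\leq \max_i q_\nu^{mnt\beta_i}\,\wt{\al}(x)$ (from \eqref{Eq_nbd2} and \eqref{Eq_Norm_Inv}) shows that for $x\in X_{\leq T}$ and $M>M_0$ the defining condition of $Z_x(M,N,t)$ already fails at $\ell=1$, so the set is \emph{empty} and there is nothing to cover. Your recursive reduction to $Z_{y_B}(M,N-1,t)$ is therefore unnecessary, and moreover it does not deliver the stated constant: summing $\wt{\al}(y_B)$ over the balls $B$ and using $\int\wt{\al}(g_tu_{\bs}x)\,d\mu_{mn}(\bs)\leq 2ctq_\nu^{-mnt}\wt{\al}(x)+C_0\leq 3ctq_\nu^{-mnt}T$ produces the bound $\tfrac{T}{M}(3ct)^Nq_\nu^{(m+n-1)mntN}$, whereas the corollary asserts $\tfrac{\wt{\al}(x)}{M}(\cdots)$ with $\wt{\al}(x)\leq T$ possibly much smaller (there is no lower bound on $\wt{\al}$ in terms of $T$). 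Your closing claim that ``choosing $M_0>T$ large enough guarantees the arithmetic closes'' cannot repair this, since enlarging $M$ rescales $T/M$ and $\wt{\al}(x)/M$ identically; the discrepancy is the ratio $T/\wt{\al}(x)$, which is independent of $M_0$. (There is also an unaddressed base case $N=1$ in your recursion, where $Z_{y_B}(M,0,t)$ is all of $M_{m,n}(\cO_\nu)$.) The fix is simply to choose $M_0$ as above and observe emptiness.
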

\begin{proof}
Using \eqref{Eq_nbd2}, \eqref{Eq_Norm_Inv}, and the construction of $\wt{\al}$ in Corollary \ref{CorMarFtn}, 
it follows that for any $x\in X$ and $\bs \in M_{m,n}(\cO_\nu)$, we have
\eqlabel{eqalphabound}{
\wt{\al}(g_t u_\bs x) \leq  \max_i q_\nu^{mnt\beta_i} \wt{\al}(x).
}
Setting $M_0=M_0(t)= T  \max_i q_\nu^{mnt\beta_i}>T$, it follows that $\wt{\al}(g_t u_\bs x) \leq M_0$ for any $x\in X_{\leq T}$ and $\bs \in M_{m,n}(\cO_\nu)$. Hence, if $x\in X_{\leq T}$ and $M>M_0$, then the set $Z_x(M,N,t)$ is empty, so the corollary follows. 

For fixed $x\in X_{>T}$ and $M>M_0>T$, it follows from Proposition \ref{EstProp} that 
\[
\int_{Z_x(M,N-1,t)}\wt{\al}(g_{Nt}u_\bs x)d\bs \leq (3ctq_\nu^{-mnt})^{N}\wt{\al}(x),
\]
hence 
\eqlabel{Eq_Intupper}{
\mu_{mn}(Z_{x}(M,N,t)) \leq \frac{\wt{\al}(x)}{M}(3ct)^N q_\nu^{-mntN}.
}

Now observe that for any $\ell \in \bNN$ we can partition $\cO_\nu$ into $q_\nu^\ell$ disjoint balls of radius $q_\nu^{-\ell}$ as follows:
\eqlabel{eqPartition}{
\cO_\nu = \bigsqcup_{a_0,\dots, a_{\ell-1} \in \mathbf k_\nu} \sum_{i=0}^{\ell-1}a_i \pi_\nu^{i} + \pi_\nu^{\ell} \cO_\nu.
}
Thus we can partition $M_{m,n}(\cO_\nu)$ into $p:=q_\nu^{(m+n)mntN}$ disjoint balls $D_1,\dots,D_p$ of radius 
$q_\nu^{-(m+n)tN}$. Let $p_1$ be the number of the balls $D_i$'s contained in $Z_{x}(M,N,t)$. 
Since $\mu_{mn}(D_i)=q_\nu^{-mn(m+n)tN}$ for all $i$,  it follows from \eqref{Eq_Intupper} that 
\eq{
p_1 \leq \frac{\wt{\al}(x)}{M}(3ct)^N q_\nu^{mn(m+n-1)tN}.
}
Reordering the $D_i$'s if necessary, we may assume that $\{D_1,\dots,D_{p_1}\} = \{D_i : D_i \subset Z_{x}(M,N,t)\}$.

Now fix any $i>p_1$, then we can take $\bs_0 \in D_i$ such that $\bs_0 \notin Z_{x}(M,N,t)$, 
which means that $\wt{\al}(g_{\ell t}u_{\bs_0} x)\leq M$ for some $\ell \in \llbracket 1, N\rrbracket$. 
By the ultrametric property, 
for any $\bs\in D_i$, $\bs=\bs_1 + \bs_0$ for some $\bs_1$ with $\|\bs_1\|\leq q_\nu^{-(m+n)tN}$. 
Hence 
$$g_{\ell t} u_\bs = g_{\ell t}u_{\bs_1}g_{-\ell t} g_{\ell t} u_{\bs_0} = u_{\pi_\nu^{-(m+n)\ell t}\bs_1} g_{\ell t} u_{\bs_0},$$
which implies $\wt{\al}(g_{\ell t}u_\bs x) = \wt{\al}(g_{\ell t}u_{\bs_0} x) \leq M$ by \eqref{Eq_Norm_Inv}.
Thus
\[
Z_x(M,N,t) \subset \bigcup_{i=1}^{p_1}D_i,
\]
which proves the corollary.
\end{proof}

For a given compact subset $Q$ of $X$, $N\in \bNN$, $\del\in(0,1)$, and $x\in X$, define the set
\[
Z_x (Q,N,t,\del) := \left\{\bs \in M_{m,n}(\cO_\nu): \frac{1}{N}|\{\ell\in \llbracket 1, N\rrbracket:g_{t\ell}u_\bs x\notin Q\}|\geq \del\right\}.
\]
\begin{thm}\label{MainCounting}
There exist $t_0>0$ and a function $ C'':X\to \bR_+$ such that the following holds. For any $t>t_0$, there exists a compact set $Q=Q(t)$ of $X$ such that for any $N\in\bNN$, $\delta\in(0,1)$, and $x\in X$, the set $Z_x(Q,N,t,\del)$ can be covered by 
\[
C'' (x)t^{3N} q_\nu^{(m+n-\del)mntN}
\] balls in $M_{m,n}(\cO_\nu)$ of radius $q_\nu^{-(m+n)tN}$.
\end{thm}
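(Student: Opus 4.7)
The plan is to combine the single-run covering estimate of Corollary \ref{CorEst} with a union bound over the possible sets of ``bad'' times in $\llbracket 1, N\rrbracket$. First, I would fix $M > M_0(t)$ (from Corollary \ref{CorEst}) and set $Q = Q(t) := X_{\leq M}$; then $\bs \in Z_x(Q,N,t,\delta)$ if and only if $|\{\ell \in \llbracket 1,N\rrbracket : \wt{\al}(g_{t\ell}u_\bs x) > M\}| \geq \delta N$. Exactly as in the proof of Corollary \ref{CorEst}, the ultrametric property together with \eqref{Eq_Norm_Inv} implies that for every $\ell \leq N$ the value $\wt{\al}(g_{t\ell}u_\bs x)$ depends on $\bs$ only through the ball of radius $q_\nu^{-(m+n)tN}$ containing it. Hence $Z_x(Q,N,t,\delta)$ is a disjoint union of such balls, and the minimum number of balls needed to cover it is exactly $\mu_{mn}(Z_x(Q,N,t,\delta)) \cdot q_\nu^{mn(m+n)tN}$, so it suffices to bound $\mu_{mn}(Z_x(Q,N,t,\delta)) \leq C(x)t^{3N}q_\nu^{-mnt\delta N}$.

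Next, write $Z_x(Q,N,t,\delta) \subseteq \bigcup_I W_I$, where $I \subseteq \llbracket 1,N\rrbracket$ has $|I| \geq \delta N$ and $W_I := \{\bs \in M_{m,n}(\cO_\nu) : g_{t\ell}u_\bs x \notin Q \text{ for all } \ell \in I\}$. Decompose each such $I$ as a disjoint union of maximal runs of consecutive integers $[a_1,b_1],\ldots,[a_r,b_r]$ of lengths $k_j = b_j - a_j + 1$ summing to $K = |I|$. The central estimate to prove is
\eq{
\mu_{mn}(W_I) \leq C(x)(3ctq_\nu^{-mnt})^K.
}
I would prove this by processing the runs right-to-left. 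For the last run $[a_r,b_r]$, a Fubini decomposition of $\bs$ at scale $q_\nu^{-(m+n)t(a_r-1)}$ expresses the constraint on this run as a condition on the fine part of $\bs$ lying in $Z_y(M,k_r,t)$, where $y = g_{t(a_r-1)}u_{\bs_c}x$ depends only on the coarse part $\bs_c$; Markov's inequality combined with Proposition \ref{EstProp} then yields $\mu_{mn}(Z_y(M,k_r,t)) \leq M^{-1}(3ctq_\nu^{-mnt})^{k_r}\wt{\al}(y)$. The remaining outer integral of $\wt{\al}(g_{t(a_r-1)}u_{\bs_c}x)\mathds{1}[\bs_c \in W_{I\smallsetminus[a_r,b_r]}]$ over $\bs_c$ has the same form as the original problem but with one fewer bad run; iterating $r$ times and using the affine contraction $\int\wt{\al}(g_tu_\bs y)\,d\mu_{mn}(\bs) \leq 2ctq_\nu^{-mnt}\wt{\al}(y) + C_0$ from the proof of Corollary \ref{CorMarFtn} to handle the free flow during the good intervals between runs yields the claim, with $C(x) \sim \wt{\al}(x) + 1$ after absorbing $M^{-r} \leq 1$ and the $t$-dependent constants into $t^{3N}$.

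Finally, summing over $I$, there are at most $2^N$ subsets with $|I| \geq \delta N$, and for $K \geq \delta N$ one has $(3ctq_\nu^{-mnt})^K \leq (3ct)^N q_\nu^{-mnt\delta N}$ provided $3ctq_\nu^{-mnt} < 1$, which holds for $t$ sufficiently large. Thus $\mu_{mn}(Z_x(Q,N,t,\delta)) \leq 2^N C(x)(3ct)^N q_\nu^{-mnt\delta N}$; choosing $t_0$ so that $6ct \leq t^3$ for all $t > t_0$ gives $2^N(3ct)^N \leq t^{3N}$, and multiplying by $q_\nu^{mn(m+n)tN}$ yields the required bound $C(x)t^{3N}q_\nu^{(m+n-\delta)mntN}$ on the number of balls. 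The hard part will be the inductive reduction across multiple bad runs: Proposition \ref{EstProp} handles a single long bad run from a fixed base point, but in the multi-run setting the base point at the start of the $j$-th run depends on the free trajectory during the preceding good interval, so one must integrate over this starting point while tracking the earlier bad-run constraints and the compounding $\wt{\al}$-factors---this is the step that requires the careful Fubini-plus-recursion argument combining Proposition \ref{EstProp} with the affine contraction from the proof of Corollary \ref{CorMarFtn}.
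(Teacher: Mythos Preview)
Your proposal is correct in outline and parallels the paper's structure (union over bad-time subsets, decomposition into maximal runs, apply the single-run estimate to each run), but it differs in one choice that makes the argument more laborious than necessary.

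You work with $W_I = \{\bs : g_{t\ell}u_\bs x \notin Q\ \text{for all }\ell \in I\}$, which records only \emph{which} times are bad; the paper uses $Z(J) = \{\bs : J_{u_\bs x} = J\}$, the \emph{exact} bad set. The extra information pays off immediately: if $\bs_0 \in Z(J)$ and $L$ is the last time just before a bad run $J_{\ell+1}$, then $L \notin J$ by maximality of the runs, so $\wt{\al}(g_{Lt}u_{\bs_0}x) \leq M$. Hence when Corollary~\ref{CorEst} is applied with base point $x' = g_{Lt}u_{\bs_0}x$, one has $\wt{\al}(x')/M \leq 1$ \emph{uniformly}, and the covering number for that run is simply $(3ct)^{|J_{\ell+1}|}q_\nu^{(m+n-1)mnt|J_{\ell+1}|}$, with no $\wt{\al}$-weight to propagate. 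The paper then runs a clean left-to-right induction on the truncation level $L$, multiplying these numbers; the affine contraction from Corollary~\ref{CorMarFtn} is never invoked.

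With $W_I$ you forfeit this, so after peeling a run you are left integrating $\wt{\al}(y)$ against the remaining constraints, and your claim that this ``has the same form as the original problem'' is not literally true---the recursion is now weighted. Your fix via the affine bound $A\wt{\al}\leq 2ctq_\nu^{-mnt}\wt{\al}+C_0$ does go through, but you need (i) the convolution inequality (Claim~2 in the proof of Proposition~\ref{EstProp}) to iterate it across each gap, and (ii) to carry multiplicative factors of the form $(1+C_0 M^{-1})^r$ and absorb them into $t^{3N}$ by choosing $M=M(t)$ large. One further point: your $C(x)\sim\wt{\al}(x)+1$ is $t$-dependent through the $\om_i$; to get a genuinely $t$-independent function take $C(x)=\max_i \al_i(x)^{\beta_i}$ and push the $t$-dependent constant into $M$, which is exactly what the paper does.
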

\begin{proof}
Let $t>t_0$ be given, with $t_0 \geq 2 $ a large real number, to be determined later on, depending only on $m,n$. Let $\wt{\al}$ and $T$ be as in Corollary \ref{CorMarFtn}. We will find large enough $M>0$ such that the compact set $Q=X_{\leq M}$ satisfies the conclusion of the theorem.

For a given $N\in\bNN$ and $x\in X$, let
\[
J_x =\{\ell\in\llbracket 1, N\rrbracket:g_{t\ell}x \notin Q\}.
\]
Then, 
\[
Z_x(Q,N,t,\del)=\{\bs\in M_{m,n}(\cO_\nu) : |J_{u_\bs x}|\geq \del N\}.
\]
For any subset $J\subset\llbracket 1, N\rrbracket$, let $Z(J)=\{\bs\in M_{m,n}(\cO_\nu):J_{u_\bs x}=J\}$. Note that $Z_x(Q,N,t,\del)=\bigcup_J Z(J)$, where the union runs over all subsets $J\subset \llbracket 1, N\rrbracket$ with $|J|\geq \del N$. Since the number of such subsets is at most $2^N \leq t^N$, it is enough to show that for a given subset $J\subset\llbracket 1, N\rrbracket$, the set $Z(J)$ can be covered by $ C''(x) t^{2N}q_\nu^{((m+n)N-|J|)mnt}$ balls of radius $q_\nu^{-(m+n)tN}$, for 
\[
C''(x)=\max\left\{1,\max\{\al_i^{\beta_i}(x):i \in \llbracket 1, m+n-1 \rrbracket\}\right\}.
\]

We decompose the sets $J$ into ordered subintervals $J_1,\dots,J_p$ of maximal size such that 
$J=\bigsqcup_{i=1}^p J_i$. Let $I_1,\dots,I_{p'}$ be the ordered maximal subintervals of $\llbracket 1, N\rrbracket \smallsetminus J$
such that
\[
\llbracket 1, N\rrbracket=\bigsqcup_{i=1}^p J_i \sqcup \bigsqcup_{j=1}^{p'} I_j.
\]
\textbf{Claim.}\; For any integer $L\leq N$, if
\eqlabel{eqIndStep}{
\llbracket 1, L\rrbracket =\bigsqcup_{i=1}^\ell J_i \sqcup \bigsqcup_{j=1}^{\ell'} I_j,
}
then the set $Z(J)$ can be covered by
\eqlabel{eqCoverNum}{
C''(x) t^{2L}q_\nu^{((m+n)L-|J_1|-\cdots-|J_\ell|)mnt}
} balls of radius $q_\nu^{-(m+n)tL}$.
\begin{proof}[Proof of Claim]
We prove the claim by induction on $L$. In the first step, if $\llbracket 1, L\rrbracket =J_1$, then by choosing $t_0 \geq 3c$ and $M>M_0$ large enough so that $M\geq \sum_{i=1}^d \om_i^{\beta_i}$ hence $\frac{\wt{\al}(x)}{M}\leq C'' (x)$, 
it follows from Corollary \ref{CorEst} that the set $Z_x(M,L,t)$ can be covered by 
\[
\frac{\wt{\al}(x)}{M}(3ct)^L q_\nu^{(m+n-1)mntL}\leq C''(x) t^{2L} q_\nu^{((m+n)L - |J_1|)mnt}
\]
balls of radius $q_\nu^{-(m+n)tL}$.
Since $Q=X_{\leq M}$ and $\llbracket 1, L\rrbracket\subset J$, we have $$Z(J)=\{\bs\in M_{m,n}(\cO_\nu):\forall \ell\in J,~ \wt{\al}(g_{\ell t}u_{\bs}x)>M\} \subset Z_x(M,L,t).$$ Thus the claim follows.
If $\llbracket 1, L\rrbracket =I_1$, then it follows from \eqref{eqPartition} that $Z(J)\subset M_{m,n}(\cO_\nu)$ can be covered by $q_\nu^{(m+n)Lmnt} \leq C''(x)t^{2L}q_\nu^{(m+n)Lmnt} $ balls of radius $q_\nu^{-(m+n)tL}$.

Now, assume that the claim holds for some $L \leq N $ satisfying \eqref{eqIndStep} and denote the covering for $Z(J)$ by $\{B_i\}_{i=1}^p$ for $p$ given by \eqref{eqCoverNum}. In the induction step, for the next $L'>L$, we have
two cases: either
\eqlabel{eqPartCase1}{
\llbracket 1, L'\rrbracket=\llbracket 1, L\rrbracket \sqcup I_{\ell'+1}
} or
\eqlabel{eqPartCase2}{
\llbracket 1, L'\rrbracket=\llbracket 1, L\rrbracket \sqcup J_{\ell+1}.
} 
In the first case \eqref{eqPartCase1}, applying \eqref{eqPartition}, each ball $B_i$ in $M_{m,n}(\cO_\nu)$ of radius $q_\nu^{-(m+n)tL}$ can be covered by $q_\nu^{(m+n)t|I_{\ell'+1}|mn}$ balls of radius $q_\nu^{-(m+n)t(L+|I_{\ell'+1}|)}$. Since $L'=L+|I_{\ell'+1}|$, $Z(J)$ can be covered with $$ C'' (x)t^{2L'}q_\nu^{((m+n)L'-|J_1|-\cdots-|J_\ell|)mnt}$$ balls of radius $q_\nu^{-(m+n)tL'}$, as wanted.

Now assume the second case \eqref{eqPartCase2}. 
Fix any ball $B_i$ with $Z(J)\cap B_i \neq \varnothing$ and fix $\bs_0\in Z(J)\cap B_i$. Then we have $\wt{\al}(g_{Lt}u_{\bs_0} x) \leq M$ since $L\notin J$ by the maximality of $J_{\ell+1}$ but $\wt{\al}(g_{jt}u_{\bs_0} x) > M$ for all $j\in J_{\ell+1} \subset J $. 
Hence, it follows from \eqref{eqalphabound} that
\[
M\geq \wt{\al}(g_{Lt}u_{\bs_0} x) \geq \min_i q_\nu^{-mnt\beta_i}\wt{\al}(g_{(L+1)t}u_{\bs_0} x) > \min_i q_\nu^{-mnt\beta_i} M.
\]
For a sufficiently large $M$ so that $\min_i q_\nu^{-mnt\beta_i}M > M_0$ and for $t_0\geq 3c$, Corollary \ref{CorEst} with 
$x'=g_{Lt}u_{\bs_0} x$ so that $\frac{\wt{\al}(x')}{M}\leq 1$ implies that $Z_{x'}(M,|J_{\ell+1}|,t)$ can be covered by
\[
(3ct)^{|J_{\ell+1}|}q_\nu^{(m+n-1)mnt|J_{\ell+1}|} \leq t^{2|J_{\ell+1}|}q_\nu^{(m+n-1)mnt|J_{\ell+1}|}
\]
balls of radius $q_\nu^{-(m+n)t|J_{\ell+1}|}$.

Fix any $\bs\in Z(J)\cap B_i$ and note that $\wt{\al}(g_{jt}u_\bs x)>M$ for all $j\in J$ and $\|\bs-\bs_0\|\leq q_{\nu}^{-(m+n)tL}$. 
Write $\bs'=\bs-\bs_0$ and observe that 
\[
\wt{\al}(g_{jt}u_\bs x)=\wt{\al}(g_{(j-L)t}g_{Lt}u_{\bs'}g_{-Lt}g_{Lt}u_\bs x)=\wt{\al}(g_{(j-L)t}u_{\pi_{\nu}^{-(m+n)tL}\bs'}x').
\]
Since $J_{\ell+1}\subset J$, we have $\pi_{\nu}^{-(m+n)tL}\bs' \in Z_{x'}(M,|J_{\ell+1}|,t)$.
Therefore, we have
\[
Z(J)\cap B_i \subset \pi_\nu^{(m+n)tL} Z_{x'}(M,|J_{\ell+1}|,t) + \bs_0.
\]

Note that the set $ \pi_\nu^{(m+n)tL} Z_{x'}(M,|J_{\ell+1}|,t) + \bs_0$ can be covered by
\[
p'= t^{2|J_{\ell+1}|}q_\nu^{(m+n-1)mnt|J_{\ell+1}|}
\]
balls of radius $q_\nu^{-(m+n)t(L+|J_{\ell+1}|)}=q_\nu^{-(m+n)tL'}$. Hence $Z(J)$ can be covered by
\[
pp'=C''(x) t^{2L'}q_\nu^{((m+n)L'-|J_1|-\cdots-|J_{\ell+1}|)mnt}
\] balls of radius $q_\nu^{-(m+n)tL'}$, which concludes the claim.
\end{proof}
The claim with $L=N$ concludes Theorem \ref{MainCounting} since $|J|\geq \del N$.
\end{proof}

Now we are ready to prove Theorem \ref{Thm_sing}. In fact, we are able to prove a stronger proposition. For this, we introduce the following definition:
Given $\del\in[0,1]$, we say that a point $x\in X$ \textit{$\del$-escapes on average} if for any compact subset $Q$ in X, we have
\[
\lim_{N\to\infty}\frac{1}{N}|\left\{\ell\in \llbracket 1, N\rrbracket:g_{\ell} x\notin Q\right\}|\geq \del.
\]

\begin{prop}\label{Prop_dim}
Let $\del\in[0,1]$ be given. For any $x\in X$, we have
\[
\dim_H \{\bs\in M_{m,n}(\mathbf K_\nu): u_\bs x \ \del\text{-escapes on average}\} \leq mn-\frac{\del mn}{m+n}.
\] Consequently,
\[
\dim_H \{x\in X: x\ \del\text{-escapes on average}\} \leq \dim X -\frac{\del mn}{m+n}.
\]
\end{prop}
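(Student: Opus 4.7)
The plan is to deduce Proposition~\ref{Prop_dim} from the covering estimate of Theorem~\ref{MainCounting} via a standard $\limsup$-set argument. Write $E_\delta(y) := \{\bs \in M_{m,n}(\mathbf K_\nu) : u_\bs y \text{ } \delta\text{-escapes on average}\}$. The first step is to reduce to $M_{m,n}(\cO_\nu)$: the quotient $\mathbf K_\nu/\cO_\nu$ is countable (it parametrises polar parts $\sum_{i<0} x_i \pi_\nu^i$ with finitely many nonzero coefficients $x_i \in \mathbf k_\nu$), so the same holds for $M_{m,n}(\mathbf K_\nu)/M_{m,n}(\cO_\nu)$. Fixing a countable family $\{\bs_R\}$ of coset representatives and using $u_{\bs_R + \bs'} = u_{\bs'} u_{\bs_R}$, one has
$$E_\delta(x) \cap (\bs_R + M_{m,n}(\cO_\nu)) \;=\; \bs_R + \bigl(E_\delta(u_{\bs_R} x) \cap M_{m,n}(\cO_\nu)\bigr),$$
and countable stability together with translation invariance of Hausdorff dimension reduces the first inequality to $\dim_H(E_\delta(y) \cap M_{m,n}(\cO_\nu)) \leq mn - \frac{\delta mn}{m+n}$ for arbitrary $y \in X$.

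Next I bridge the time-$1$ definition of escape with the time-$t$ step used by Theorem~\ref{MainCounting}. For a compact $Q_0 \subseteq X$ the set $\tilde Q_0 := \bigcup_{j = 0}^{t-1} g_j Q_0$ is compact, and $g_{t\ell} y \in Q_0$ implies $g_{t\ell + j} y \in g_j Q_0 \subseteq \tilde Q_0$ for each $j \in \llbracket 0, t-1 \rrbracket$. Grouping $\llbracket 1, tN \rrbracket$ into $N$ consecutive blocks of length $t$ gives
$$|\{k \leq tN : g_k y \notin \tilde Q_0\}| \;\leq\; t\,|\{\ell \leq N : g_{t\ell} y \notin Q_0\}|,$$
and taking $\liminf$ in $N$ yields
$$\liminf_{N \to \infty} \tfrac{1}{N}\,|\{\ell \leq N : g_{t\ell} y \notin Q_0\}| \;\geq\; \liminf_{N \to \infty} \tfrac{1}{tN}\,|\{k \leq tN : g_k y \notin \tilde Q_0\}| \;\geq\; \delta$$
whenever $y$ $\delta$-escapes on average. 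Consequently, for every $\delta' < \delta$ and every compact $Q \subseteq X$, each $\bs \in E_\delta(y) \cap M_{m,n}(\cO_\nu)$ lies in $Z_y(Q, N, t, \delta')$ for all sufficiently large $N$, hence
$$E_\delta(y) \cap M_{m,n}(\cO_\nu) \;\subseteq\; \bigcup_{N \geq N_0} Z_y(Q, N, t, \delta') \quad \text{for every } N_0 \in \bN.$$

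For the main dimension estimate, fix $s > mn - \frac{\delta mn}{m+n}$ and choose $\delta' < \delta$ still satisfying $s > mn - \frac{\delta' mn}{m+n}$, so that $\eta := s - mn + \frac{\delta' mn}{m+n} > 0$. Applying Theorem~\ref{MainCounting} with $t > t_0$ and the compact $Q = Q(t)$ it supplies, each $Z_y(Q, N, t, \delta')$ is covered by $C(y)\, t^{3N}\, q_\nu^{(m+n - \delta')mn t N}$ balls of radius $q_\nu^{-(m+n)tN}$, contributing
$$C(y)\, t^{3N}\, q_\nu^{\bigl((m+n-\delta')mn - (m+n)s\bigr) t N} \;=\; C(y)\,\bigl(t^{3}\, q_\nu^{-(m+n) t \eta}\bigr)^{N}$$
to the $s$-dimensional Hausdorff sum. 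Choosing $t$ large enough (depending only on $s, \delta', \eta, m, n, q_\nu$) so that $t^{3}\, q_\nu^{-(m+n) t \eta} < \tfrac{1}{2}$, the geometric tail $\sum_{N \geq N_0}$ tends to $0$ as $N_0 \to \infty$, whence $\mathcal{H}^{s}\bigl(E_\delta(y) \cap M_{m,n}(\cO_\nu)\bigr) = 0$. Letting $s \searrow mn - \frac{\delta mn}{m+n}$ proves the first assertion.

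The second assertion follows from the first by a local product structure. Cover $X$ by countably many open charts of the form $\Phi : U(\cO_\nu) \times V \to X$, $(u_\bs, y) \mapsto u_\bs y$, where $V$ is an open transversal to the $U$-orbits of Hausdorff dimension $\dim X - mn$, obtained from the local factorisation $G \supseteq U \cdot P$ already exploited in the proof of Proposition~\ref{LinAlgEst}. For each $y \in V$, the first assertion bounds $\dim_H\{\bs \in U(\cO_\nu) : u_\bs y \text{ } \delta\text{-escapes on average}\}$ by $mn - \frac{\delta mn}{m+n}$, and the standard slicing upper bound for the Hausdorff dimension of a set fibred over $V$ with such slices gives $\dim V + \bigl(mn - \frac{\delta mn}{m+n}\bigr) = \dim X - \frac{\delta mn}{m+n}$ on each chart, hence globally. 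The principal technical delicacy is the exponential trade-off in the third paragraph: $t$ must be chosen \emph{after} $s$ and $\delta'$ so that $t^{3}\, q_\nu^{-(m+n) t \eta} < 1$, which is precisely where the saving $q_\nu^{-mnt}$ established in Proposition~\ref{LinAlgEst} (and carried through Corollary~\ref{CorMarFtn}) controls the final exponent $\frac{\delta mn}{m+n}$.
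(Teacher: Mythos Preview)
Your argument is correct and follows essentially the same route as the paper's proof: both deduce the dimension bound from the covering estimate of Theorem~\ref{MainCounting} and then send $t\to\infty$ (equivalently, choose $t$ after $s$). The differences are minor and, if anything, in your favour. The paper passes through the upper box dimension of $\bigcap_{N\ge N_0} Z_x(Q,N,t,\delta)$ and then lets $t\to\infty$, whereas you compute $\mathcal H^s$ directly via the geometric tail $\sum_{N\ge N_0}$; these are standard interchangeable devices. More substantively, you make explicit two points the paper glosses over: the reduction from $M_{m,n}(\mathbf K_\nu)$ to $M_{m,n}(\cO_\nu)$ by countable translates, and the bridge between the time-$1$ definition of $\delta$-escape and the time-$t$ sets $Z_x(Q,N,t,\cdot)$ via the enlarged compact $\tilde Q_0=\bigcup_{j=0}^{t-1} g_j Q_0$ together with the auxiliary $\delta'<\delta$. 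The paper simply asserts the inclusion $Z_x\cap M_{m,n}(\cO_\nu)\subset\bigcup_{N_0}\bigcap_{N\ge N_0} Z_x(Q,N,t,\delta)$ without this bridging, which strictly speaking requires your argument (or an equivalent one). For the ``consequently'' part, both proofs invoke the same local product (thickening along the weak-stable direction) argument from \cite{KKLM}.
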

\begin{proof}
It is enough to show the proposition for $\del\in (0,1)$.
Let $\del\in(0,1)$ and $x\in X$ be given and let $t_0 >0$ be as in Theorem \ref{MainCounting}. For any $t>t_0$, choose the compact set $Q$ as in Theorem \ref{MainCounting}. Denote \[Z_x=\{\bs\in M_{m,n}(\mathbf K_\nu): u_\bs x \ \del\text{-escapes on average}\}.\]
Let $\del'\in(0,\del)$ and denote by $\overline{\dim}_B$ the upper box-counting dimension. Consider the compact set $Q' = \bigcup_{k=1}^{t}a_{-k}Q$ in $X$. Fix any $\bs \in Z_x \cap M_{m,n}(\cO_\nu)$ and observe that for any large enough $N\geq 1$, we have
$$|\left\{\ell\in \llbracket 1, tN\rrbracket:g_{\ell} u_\bs x\notin Q'\right\}|\geq \del' tN.$$
Since $g_\ell u_\bs x \in Q'$ if and only if $g_{k+\ell}u_\bs x \in Q$ for some $1\leq k\leq t$, if $g_{\ell t}u_\bs x \in Q$, then $g_j u_\bs x \in Q'$ for any $(\ell-1)t+1\leq j\leq \ell t$. Thus, we have
\[
t|\left\{\ell\in \llbracket 1, N\rrbracket:g_{\ell t} u_\bs x\in Q\right\}| \leq |\left\{\ell\in \llbracket 1, tN\rrbracket:g_{\ell} u_\bs x\in Q'\right\}|.
\]
This implies that
\[
|\left\{\ell\in \llbracket 1, N\rrbracket:g_{\ell t} u_\bs x\notin Q\right\}| \geq \del' N.
\]
Therefore, we have 
\[Z_x \cap M_{m,n}(\cO_\nu) \subset \bigcup_{N_0 \geq 1}\bigcap_{N\geq N_0} Z_x(Q,N,t,\del').\]

It follows from Theorem \ref{MainCounting} that 
\[
\begin{split}
\dim_H Z_x &\leq \overline{\dim}_B Z_x \leq \overline{\dim}_B\left( \bigcup_{N_0 \geq 1}\bigcap_{N\geq N_0} Z_x(Q,N,t, \del')\right)\\
&\leq \sup_{N_0\geq 1}\limsup_{N\to\infty}\frac{\log_{q_\nu} (C'' (x)t^{3N}q_\nu^{(m+n- \del')mntN})}{-\log_{q_\nu}(q_\nu^{-(m+n)tN})}\\
&=\frac{3\log_{q_\nu} t + (m+n- \del')mnt}{(m+n)t}.
\end{split}
\] 
Letting $t\to\infty$ and $\del'\to\del$, we have
\[
\dim_H Z_x \leq mn-\frac{\del mn}{m+n}.
\]

The ``consequently'' part follows from the same argument as in the proof of \cite[Theorem 1.1]{KKLM} by considering the thickening along the weak stable horospherical subgroup with respect to $g_1$ (we can use the neighborhood $V$ of $\mb{I}_{m+n}$ contained in $P(\cOnu) U(\cOnu)$ defined in \textbf{Step 3} of the proof of Theorem \ref{ThmWedgeUpperBound}).
\end{proof}
\begin{proof}[Proof of Theorem \ref{Thm_sing}]
    It follows from Dani's correspondence (Lemma \ref{Lem_Dani}) that if $\mb{s}\in M_{m,n}(\mb{K}_\nu)$ is singular, then the point $u_{\mb{s}}x_0$ $1$-escapes on average, where $x_0$ denotes the identity coset in $X$. Therefore, by Proposition \ref{Prop_dim}, we have
    \[\dim_H \mathrm{Sing}(m,n) \leq mn - \frac{mn}{m+n}.\]
\end{proof}

\def\cprime{$'$} \def\cprime{$'$} \def\cprime{$'$}
\providecommand{\bysame}{\leavevmode\hbox to3em{\hrulefill}\thinspace}
\providecommand{\MR}{\relax\ifhmode\unskip\space\fi MR }
\providecommand{\MRhref}[2]{%
  \href{http://www.ams.org/mathscinet-getitem?mr=#1}{#2}
}

\end{document}